\newtheorem{theorem}{Theorem} [section]
\newtheorem{lemma}[theorem]{Lemma}
\newtheorem{proposition}[theorem]{Proposition}
\newtheorem{remark}[theorem]{Remark}
\newtheorem{definition}[theorem]{Definition}
\newtheorem{corollary}[theorem]{Corollary}
\newtheorem*{ackno}{Acknowledgment}
\newcommand{\I}{\mathcal{I}}
\newcommand{\noi}{\noindent}
\newcommand{\Z}{\mathbb{Z}}
\newcommand{\R}{\mathbb{R}}
\newcommand{\T}{\mathbb{T}}
\newcommand{\TT}{\mathcal{T}}
\newcommand{\BT}{{\bf T}}
\newcommand{\M}{\mathcal{M}}
\newcommand{\NB}{\mathbb{N}}
\newcommand{\FL}{\mathcal{F}L} 
\newcommand{\FLv}{\overrightarrow{\mathcal{F}L}}
\renewcommand{\H}{\mathcal{H}}
\newcommand{\F}{\mathcal{F}}
\newcommand{\dl}{\delta}
\newcommand{\Dl}{\Delta}
\newcommand{\eps}{\varepsilon}
\newcommand{\G}{\Gamma}
\newcommand{\ld}{\lambda}
\newcommand{\s}{\sigma}
\newcommand{\Si}{\Sigma}
\newcommand{\ft}{\widehat}
\newcommand{\wt}{\widetilde}
\newcommand{\cj}{\overline}
\newcommand{\dt}{\partial_t}
\newcommand{\embeds}{\hookrightarrow}
\newcommand{\ta}{\theta}
\renewcommand{\O}{\Omega}
\newcommand{\les}{\lesssim}
\newcommand{\ges}{\gtrsim}
\newcommand{\jb}[1]
{\langle #1 \rangle}
\newcommand{\ind}{\mathbbm{1}}
\numberwithin{equation}{section}
\numberwithin{theorem}{section}
\newcommand{\abs}[1]{\lvert#1\rvert}
\begin{document}

\title[Norm inflation for vNLW]{Norm inflation for the viscous nonlinear wave equation}

\subjclass[2020]{35Q35, 35L05, 35B30}

\keywords{nonlinear viscous wave equation;
ill-posedness;
norm inflation}

\begin{abstract}
In this article, we study the ill-posedness of the viscous nonlinear wave equation for any polynomial nonlinearity in negative Sobolev spaces. In particular, we prove a norm inflation result above the scaling critical regularity in some cases.
We also show failure of $C^k$-continuity, for $k$ the power of the nonlinearity, up to some regularity threshold.
\end{abstract} 

\author{Pierre de Roubin and Mamoru Okamoto}

\address{
Pierre de Roubin\\
School of Mathematics \\
University of Edinburgh \\
Edinburgh\\ 
 EH14 4AS\\
  United Kingdom \\}

\email{p.m.d.de-roubin@sms.ed.ac.uk}

\address{
Mamoru Okamoto\\
Department of Mathematics \\
Graduate School of Science \\
Osaka University \\
Toyonaka\\
 Osaka 560-0043 \\
 Japan\\}

\email{okamoto@math.sci.osaka-u.ac.jp}

\maketitle

\tableofcontents

\section{Introduction}

We consider the viscous nonlinear wave equation (vNLW):
\begin{equation}
\begin{cases}\label{vNLW}
\dt^{2}u-\Dl u + \sqrt{-\Dl}\dt u = -u^k \\
(u,\dt u)|_{t = 0} = (u_0,u_1)\in \H^s (\M^d), 
\end{cases}
\qquad ( t, x) \in \R \times \M^d, 
\end{equation}

\noi where $d \geq 1$, $k \geq 2$ an integer, $\H^s (\M^d) := H^s (\M^d) \times H^{s-1}(\M^d)$ and $\M = \R$ or $\T$, with $\T = \R / \Z$.
Here, we denote $H^s(\M^d)$ the $L^2$-based Sobolev space.
This problem has been introduced by Kuan-\v{C}ani\'c~\cite{KC} as a prototype to model the interaction between a prestressed, stretched membrane and a viscous, incompressible fluid.

On a mathematical level, the interest in studying \eqref{vNLW} lies on its slight difference with the nonlinear wave equation (NLW)
\begin{equation}
\label{NLW}
\dt^{2}u-\Dl u = -u^k.
\end{equation}

\noi Indeed, \eqref{NLW} is a broadly studied equation, see for instance \cite{BH, CCT, FO20, Kap94, KT98, Leb05, Lin93, Lin96, LS95, Tao99, Xia},
and one might wonder how the term $\sqrt{-\Dl}\dt u $ changes
properties  of the solution. In that context, we first look for symmetries in the equation, and more particularly scaling symmetries. In this case, observe that the change of scales
$$
u(t,x) \to \ld^{\frac{2}{k-1}} u (\ld t , \ld x)
$$
\noi preserves solutions of \eqref{vNLW}. Besides, if we denote $\Dot{H}^s (\R^d)$ the homogeneous Sobolev space, equipped with the norm
$$
\| f \|^2_{\Dot{H}^s } = \int_{\R^d} \abs{\xi}^{2s} \abs{\widehat{f}(\xi)}^2 d\xi,
$$

\noi then $\Dot{H}^s (\R^d)$ is preserved by this change of scales when $s$ is the so-called {\it scaling critical regularity} 
\begin{equation}
\label{EQ:sc}
s_{\rm scal} \coloneqq \frac d2 - \frac{2}{k-1}.
\end{equation}

We say that the Cauchy problem \eqref{vNLW} is well-posed if there exists a unique solution and that this solution is stable with respect to our initial data,
namely,
the solution map is well-defined and continuous.
It is shown in the appendix of \cite{Liu23} that \eqref{vNLW} is well-posed in $H^s (\T^2)$ when $k > 3$ and $s \geq s_{\rm scal}$ or $k=3 $ and $s > s_{\rm scal}$.
On the other hand, the well-posedness is not satisfied, in which case we say the Cauchy problem is ill-posed.
In this context,
Kuan-\v{C}ani\'c \cite{KC} prove that \eqref{vNLW} exhibits norm inflation, which we define later on, in $H^s (\R^d)$ when the nonlinearity $k \geq 3$ is an odd integer and $0 < s < s_{\rm scal}$.
In this article, we first prove the norm inflation for the negative Sobolev space
and the non-smoothness of the solution map.

\begin{theorem}
\label{THM:main}
Let $d \geq 1$ and $k \geq 2$. We have the three following results:
\begin{enumerate}
	\item \label{THM:mainA} Let $s < \min ( s_{\rm scal}, 0)$. Fix $(u_0 , u_1) \in \H^s (\M^d)$. Then, given any $\eps > 0$, there exists a solution $u_\eps$ to \eqref{vNLW} on $\M^d$ and $t_\eps \in (0, \eps)$ such that
\begin{equation}
\label{EQ:NIatGID}
\| (u_\eps (0), \dt u_\eps(0)) - (u_0, u_1) \|_{\H^s} < \eps \quad \text{ and } \quad \| u_\eps (t_\eps) \|_{H^s} > \eps^{-1}.
\end{equation}
	\item \label{THM:mainB} Assume furthermore $2 \leq k \leq 5$. Let $\s \in \R$ and $s < - \frac 1k$. Fix $(u_0 , u_1) \in \H^s (\M^d)$. Then, given any $\eps > 0$, there exists a solution $u_\eps$ to \eqref{vNLW} on $\M^d$ and $t_\eps \in (0, \eps)$ such that
\begin{equation}
\label{EQ:NIwithILOR}
\| (u_\eps (0), \dt u_\eps(0)) - (u_0, u_1) \|_{\H^s} < \eps \quad \text{ and } \quad \| u_\eps (t_\eps) \|_{H^\s} > \eps^{-1}.
\end{equation}
	\item \label{THM:mainC} Assume that $2 \le k \le 5$ and $s < d (\frac 12 - \frac 1k) - \frac 1k$.
	Then, the solution map  of \eqref{vNLW} fails to be $C^k$ in $\H^s(\M^d)$.
\end{enumerate}
\end{theorem}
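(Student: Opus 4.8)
The plan is to treat all three statements through the Picard (power-series) expansion of the solution relative to the viscous linear flow, isolating a large resonant Fourier mode produced by the first genuinely nonlinear term. Record first the linear data: the roots of $\mu^{2}+|\xi|\mu+|\xi|^{2}=0$ are $|\xi|(-\tfrac12\pm i\cst)$, so the propagators for \eqref{vNLW} carry the damping factor $e^{-|\xi|t/2}$ and an oscillation at frequency $\cst|\xi|$. Writing $\mathbf S(t)(u_0,u_1)$ for the linear solution and $\mathcal D(t)$ for the operator with $\widehat{\mathcal D(t)f}(\xi)=\tfrac{2}{\sqrt3|\xi|}e^{-|\xi|t/2}\sin(\cst|\xi|t)\,\widehat f(\xi)$ (and value $t\,\widehat f(0)$ at $\xi=0$), Duhamel's formula reads $u(t)=\mathbf S(t)(u_0,u_1)-\int_0^{t}\mathcal D(t-t')\,u(t')^{k}\,dt'$, and iteration gives $u=\sum_{j\ge0}u^{[j]}$ with $u^{[0]}=\mathbf S(t)(u_0,u_1)$ linear, $u^{[1]}=-\int_0^{t}\mathcal D(t-t')(u^{[0]})^{k}\,dt'$ homogeneous of degree $k$, and $u^{[j]}$ of degree $1+j(k-1)$. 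The damping makes this series more robust than for \eqref{NLW}: at a time $t\sim N^{-1}$ the weight $e^{-|\xi|t/2}$ is $\sim1$ on $|\xi|\lesssim N$ but decays sharply beyond, keeping the relevant modes alive while forcing summability of the tail.

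For \eqref{THM:mainA} I would run a standard many-mode norm-inflation scheme (in the spirit of Christ--Colliander--Tao~\cite{CCT}). Take data $(\phi_N,0)$ with $\widehat{\phi_N}$ supported on $\pm\Sigma$, $\Sigma$ a ball at scale $N$ with $\#\Sigma=A\sim N^{d}$, and amplitude $R$ so that $\|\phi_N\|_{\H^s}\sim RA^{1/2}N^{s}$ is small. At $t_N\sim N^{-1}$ the relevant time factors stay $\gtrsim1$, and counting the $\gtrsim A^{k-1}$ solutions of $\eta_1+\dots+\eta_k=m$ in $(\pm\Sigma)^{k}$ with $|m|\sim N$ gives $\|u^{[1]}(t_N)\|_{H^{s}}\gtrsim N^{s-2}R^{k}A^{k-1/2}$, which after inserting $R$ and $A$ blows up exactly when $s<s_{\rm scal}$. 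One then checks that $u^{[0]}(t_N)$ and the remainder $\sum_{j\ge2}u^{[j]}(t_N)$ (with the Picard tail) are negligible in $H^{s}$ — the crude product and Duhamel estimates for this close for $s<\min(s_{\rm scal},0)$ — and passes from $0$ to a general $(u_0,u_1)$ by a perturbative stability argument, since $\phi_N$ concentrates at frequency $N\to\infty$ and barely interacts with the fixed (frequency-bounded) solution $\mathbf S(t)(u_0,u_1)$. For \eqref{THM:mainB} I would instead steer the resonant output to frequency $0$: one high mode if $k$ is even, two nearby high modes if $k$ is odd, so that $k$ linear factors resonate to $0$; at a fixed time $t_\eps\sim\eps$ the non-oscillating (damped-but-integrable) part of $\widehat{(u^{[0]}(t'))^{k}}(0)$ accumulates linearly over $[0,t_\eps]$ and contributes to $\widehat{u^{[1]}}(t_\eps,0)$ an amount $\sim R^{k}N^{-1}$, i.e.\ $\sim N^{-1-ks}$ once $\|\phi_N\|_{\H^s}\sim\eps$. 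This is independent of $\s$, so $\|u_\eps(t_\eps)\|_{H^{\s}}$ becomes large for every $\s\in\R$; the remainder at frequency $0$ is negligible by a sharper estimate available only for $2\le k\le5$, and the balance closes for $s<-\tfrac1k$. On $\M^d=\R^d$ the mode at the origin is replaced by a small fixed ball and the sums by integrals, with no essential change.

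For \eqref{THM:mainC}: since $u^{k}$ is homogeneous of degree $k$, the solution map is automatically $C^{k-1}$ near $0$, and were it $C^{k}$ then — by its Taylor expansion at $0$, valid for smooth data by local theory — its $k$-th Fréchet derivative there, which up to a constant is the $k$-linear map $\N_k(f_1,\dots,f_k)=-\int_0^{t}\mathcal D(t-t')\prod_{i=1}^{k}\mathbf S(t')f_i\,dt'$, would be bounded $\H^s(\M^d)^{k}\to C([0,T];\H^s(\M^d))$. I would test $\N_k$ on the diagonal $(\phi_N,\dots,\phi_N)$ with $\phi_N$ the ball-at-scale-$N$, $A\sim N^{d}$-mode profile above, now normalised so $\|\phi_N\|_{\H^s}\sim1$ (no smallness, no remainder to control), and read off the frequency-$0$, velocity component $-\int_0^{t_N}\widehat{(\mathbf S(t')\phi_N)^{k}}(0)\,dt'$: at $t_N\sim N^{-1}$ the same count of $\gtrsim A^{k-1}$ resonances to $0$ (two scales when $k$ is odd) makes this $\gtrsim N^{-1}R^{k}A^{k-1}$, i.e.\ $\gtrsim N^{\frac{d(k-2)}{2}-1-ks}$ in $H^{s-1}$ after inserting $R\sim N^{-s}A^{-1/2}$ and $A\sim N^{d}$, which is unbounded as $N\to\infty$ precisely when $s<\frac{d(k-2)}{2k}-\frac1k=d(\tfrac12-\tfrac1k)-\tfrac1k$. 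Hence $\N_k$ is unbounded and the solution map is not $C^{k}$.

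The technical heart is the estimates for \eqref{THM:mainA}--\eqref{THM:mainB}, not the lower bounds: one must bound \emph{all} higher Duhamel iterates and the Picard tail uniformly in $N$ as $t_N\to0$, using the viscous smoothing $e^{-|\xi|t/2}$, and it is where these close that produces the thresholds $\min(s_{\rm scal},0)$ and — in the finer version — $-\tfrac1k$ (and the restriction $2\le k\le5$). A secondary difficulty is arranging the resonant count to be of the optimal order $A^{k-1}$ while the corresponding products of time factors $e^{-|n|t/2}(\cos+\tfrac1{\sqrt3}\sin)(\cst|n|t)$ do not cancel, which constrains $t_N$ against the spread of $\Sigma$ and is more delicate for odd $k$, where a second scale must be introduced. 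Statement \eqref{THM:mainC} is comparatively soft precisely because it bypasses this remainder analysis, which is how it reaches the larger threshold $d(\tfrac12-\tfrac1k)-\tfrac1k$.
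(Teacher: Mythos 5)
Your Part (1) argument has a structural gap, not just a missing computation. With your choice of data (Fourier support an annulus/ball of width $\sim N$, i.e.\ $A\sim N^d$ modes of amplitude $R$) and with the output read off at frequencies $|m|\sim N$, the main term and the tail cannot be separated: your own lower bound can be rewritten as
\begin{equation*}
\|u^{[1]}(t_N)\|_{H^s}\sim N^{s}t_N^2R^kA^{k-\frac12}
=\big(t_N^{2}(RA)^{k-1}\big)\cdot RA^{\frac12}N^{s}\sim \eps\cdot t_N^{2}\,\|\phi_N\|_{\FL^{0,1}}^{\,k-1},
\end{equation*}
while every crude bound on the higher iterates (and the convergence of the Picard series in the Wiener algebra, Lemma~\ref{LEM:ExistenceOfSolution}) requires precisely $t_N^{2}\|\phi_N\|_{\FL^{0,1}}^{k-1}\ll1$. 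So inflation of the first iterate forces the tail out of control; there is no frequency separation between data and output, hence the weight $\jb{\xi}^{s}$ gives no gain. This is why the paper supports the data on a few cubes of a \emph{tunable} width $A\ll N$ around $\pm Ne_1,\pm2Ne_1$ and reads the output at frequencies $O(A)$ (high-to-low transfer): the main term then beats the tail by the factor $Rg_s(A)$, while the data norm still carries the small factor $N^{s}$. Even with that construction, the short-time scheme ($t\ll N^{-1}$) closes only under \eqref{eq:condspart1}; the cases $k=2$ with $d\le2$, $k=3$ with $d=1$, and $s\le-\frac d2$ with $k\le4$ are \emph{not} reachable by "crude product and Duhamel estimates" and in the paper are obtained only through the Part~(2) machinery at times $N^{-1}\ll t$. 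Your assertion that the standard scheme closes for all $s<\min(s_{\rm scal},0)$ is therefore incorrect as stated.

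For Part (2) your mechanism (resonance to frequency zero from one high pair for even $k$, two scales for odd $k$) is the paper's, but both technical pillars are missing and one is misattributed. The size $R^{k}t/N$ of $\widehat{\Xi_1}(t,0)$ for $N^{-1}\ll t$ does not come from "linear accumulation of a non-oscillating part": the damping $e^{-\frac{t'}2\sum_j|\xi_j|}\approx e^{-ckNt'}$ kills the integrand after $t'\sim N^{-1}$, and one must prove that the boundary terms produced by the $2^{k}$ sign choices in the product-to-sum expansion do not cancel — this is Lemma~\ref{BoundBigI}, and it is exactly where $2\le k\le5$ enters (the remainder estimates, Lemmas~\ref{lem:it2t} and~\ref{lem:ujit1}, hold for all $k$, contrary to your attribution). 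Moreover, since the Wiener-algebra smallness condition is too strong in this regime, the tail must be controlled by estimating each Picard iterate directly, which you defer entirely; this is where the threshold $-\frac1k$ actually materializes. Your Part (3), by contrast, is essentially sound and takes a genuinely different, arguably simpler route: short time $t\sim N^{-1}$, a positive integrand (so no oscillation analysis and no visible need for $k\le5$), and the velocity component in $H^{s-1}$ to gain the extra factor of $N$ that the paper obtains by taking $t\sim(\log N)^{-1}$ and invoking Proposition~\ref{PROP:NewEst1}. Two caveats: your gain relies on reading the solution map as $\vec u_0\mapsto(u,\dt u)\in C([0,T];\H^s(\M^d))$ (the position component alone at $t\sim N^{-1}$ only reaches $s<s_{\rm vis}-\frac1k$), and the Fr\'echet-derivative reduction should be stated as in Bourgain's argument (boundedness of the $k$-homogeneous term on a ball), which is what the paper implicitly uses.
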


Let us give some precisions on the vocabulary. Equations \eqref{EQ:NIatGID} and \eqref{EQ:NIwithILOR} represent the so-called {\it norm inflation}. More precisely, since they hold for any initial data, we say that we have norm inflation {\it at general initial data}. Finally, having \eqref{EQ:NIwithILOR} for any $\s < s$ means that we have {\it infinite loss of regularity}.

Besides, norm inflation does imply the ill-posedness, especially the discontinuity of the solution map. More specifically, if we denote
$$
s_M (k,d)  = \max \Big( s_{\rm scal}  , - \frac 1k \Big),
$$

\noi then Theorem~\ref{THM:main} yields the following corollary:


\begin{corollary}
\label{MapDiscont}
Let $d \geq 1$, $k \geq 2$ and $s < \min(s_M (k,d), 0)$. Then, for any $T > 0$, the solution map
of \eqref{vNLW}
is discontinuous everywhere in $\H^s (\M^d)$.

In particular, \eqref{vNLW} is ill-posed in $\H^s (\M^d)$ for $s < \min(s_M, 0)$.
\end{corollary}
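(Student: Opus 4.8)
My plan is to obtain the corollary as a routine consequence of Theorem~\ref{THM:main}, using the general principle that norm inflation at \emph{general} initial data is incompatible with continuity of the solution map at \emph{any} point.

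\emph{Reduction to Theorem~\ref{THM:main}.} First I would check that the whole range $s < \min(s_M(k,d),0)$ is covered by part~\eqref{THM:mainA} or part~\eqref{THM:mainB}. Note that any such $s$ is negative (if $s_M \geq 0$ then $\min(s_M,0)=0$, while if $s_M<0$ then $s<s_M<0$). If $s < s_{\rm scal}$, then $s < \min(s_{\rm scal},0)$ and part~\eqref{THM:mainA} gives norm inflation at general initial data in $H^s$. If instead $s \geq s_{\rm scal}$, then since $s < \min(s_M,0) \leq s_M = \max(s_{\rm scal},-\tfrac1k)$ we cannot have $s_M = s_{\rm scal}$ (that would force $s < s_{\rm scal} \leq s$), so $s_M = -\tfrac1k > s_{\rm scal}$ and $s < \min(-\tfrac1k,0) = -\tfrac1k$. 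The strict inequality $\tfrac d2 - \tfrac2{k-1} = s_{\rm scal} < -\tfrac1k$ rearranges to $\tfrac12 + \tfrac1k \leq \tfrac d2 + \tfrac1k < \tfrac2{k-1}$, which is false for every integer $k \geq 6$; hence $2 \leq k \leq 5$, and part~\eqref{THM:mainB} with the choice $\s = s$ again gives norm inflation at general initial data in $H^s$. Either way we reach the same statement: for all $(u_0,u_1)\in\H^s(\M^d)$ and all $\eps>0$ there are a solution $u_\eps$ of \eqref{vNLW} on $\M^d$ and a time $t_\eps\in(0,\eps)$ with $\|(u_\eps(0),\dt u_\eps(0))-(u_0,u_1)\|_{\H^s}<\eps$ and $\|u_\eps(t_\eps)\|_{H^s}>\eps^{-1}$.

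\emph{From norm inflation to everywhere-discontinuity.} Next, fixing $T>0$ and an arbitrary $(u_0,u_1)\in\H^s(\M^d)$, I would apply the previous paragraph with $\eps=\eps_n:=\min(1/n,T)$ to produce solutions $u_n$ of \eqref{vNLW} and times $t_n\in(0,\eps_n)\subset(0,T)$ such that $(u_n(0),\dt u_n(0))\to(u_0,u_1)$ in $\H^s$ while, using $\|u\|_{H^s}\leq\|(u,\dt u)\|_{\H^s}$,
\begin{equation*}
\sup_{t\in[0,T]}\|(u_n(t),\dt u_n(t))\|_{\H^s}\ \geq\ \|u_n(t_n)\|_{H^s}\ >\ \eps_n^{-1}\ \too\ \infty .
\end{equation*}
Hence every $\H^s$-neighbourhood of $(u_0,u_1)$ contains initial data whose solutions are unbounded in $C([0,T];\H^s(\M^d))$. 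If the solution map of \eqref{vNLW} were defined and continuous at $(u_0,u_1)$ with values in $C([0,T];\H^s)$, then $u_n$ would converge there to the bounded solution emanating from $(u_0,u_1)$, contradicting the display; and if it is undefined at $(u_0,u_1)$ it is a fortiori not continuous there. Since $(u_0,u_1)$ and $T>0$ were arbitrary, the solution map is discontinuous everywhere in $\H^s(\M^d)$. The ill-posedness assertion then follows at once from the definition of well-posedness recalled in the introduction (a well-defined, continuous solution map), which we have just ruled out.

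\emph{Main point of care.} The deduction is essentially bookkeeping once Theorem~\ref{THM:main} is available; the only genuinely substantive step is the case split above, which must confirm that the restriction $2\leq k\leq 5$ in part~\eqref{THM:mainB} costs nothing — it is automatically satisfied precisely in the regime where part~\eqref{THM:mainA} does not already apply.
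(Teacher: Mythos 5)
Your argument is correct and is exactly the route the paper intends: the corollary is stated as an immediate consequence of Theorem~\ref{THM:main} (norm inflation at general initial data), and your case split confirming that the range $s<\min(s_M,0)$ is covered by part~\eqref{THM:mainA} or part~\eqref{THM:mainB} (with $2\le k\le 5$ automatic when $s_M=-\tfrac1k>s_{\rm scal}$) together with the standard deduction of everywhere-discontinuity is precisely the bookkeeping the paper leaves to the reader.
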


Note that, in the cases of quadratic equation in dimensions $1$ and $2$, and cubic equation in dimension $1$, Corollary~\ref{MapDiscont} implies the ill-posedness above the scaling critical regularity. On another note, while the third part of Theorem~\ref{THM:main} does not go so far as discontinuity of the solution map, it does imply that we cannot use an iteration argument to prove well-posedness and hence suggest that our problem might be ill-posed in this range of regularity.
More generally, this lets us assume that
the regularity threshold for the well-posedness in $L^2$-based Sobolev spaces is
$\max (s_{\rm scal}, s_{\rm vis})$,
where 
\begin{equation}
\label{defsWP}
s_{\rm vis} \coloneqq d \bigg( \frac 12 - \frac 1k\bigg) - \frac 1k .
\end{equation}
Note that $s_{\rm vis} > s_{\rm scal}$ holds if and only if $d$ and $k$ satisfy either of
%
%
$d=1$ or ($d=2$ and $k=2$).

This intuition is also strengthened by the following result of the well-posedness in $H^s (\M^d)$ for $s_{\rm vis}< s \leq 0$.

\begin{theorem}
\label{thm:WP}
Let $d$ and $k$ satisfy
\begin{equation}
\big(\text{$d = 1, 2, 3$ and $k = 2$}\big)
\quad \text{or} \quad
\big(\text{$d =1 $ and $k = 3$}\big).
\label{WPA}
\end{equation}
Then, \eqref{vNLW} is locally well-posed in $\H^s (\M^d)$ for $s_{\rm vis} < s \leq 0$.
\end{theorem}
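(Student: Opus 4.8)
The plan is to run a standard contraction-mapping (Picard iteration) argument on the Duhamel formulation of \eqref{vNLW}, exploiting the smoothing provided by the viscous term $\sqrt{-\Dl}\dt u$. First I would write the linear part of \eqref{vNLW} via its symbol: the operator $\dt^2 - \Dl + \sqrt{-\Dl}\dt$ has characteristic roots $\lambda_{\pm}(\xi) = \frac{1}{2}\big(-|\xi| \pm |\xi|\sqrt{-3}\big)$, so the free evolution $S(t)(u_0,u_1)$ carries a factor like $e^{-t|\xi|/2}$, i.e. a heat-type gain of one derivative for positive times (more precisely, a gain that can be traded, at the cost of a factor $t^{-\theta}$, for $\theta$ derivatives in $L^2_x$). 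Concretely, writing the solution as $u(t) = S(t)(u_0,u_1) - \int_0^t \mathcal{D}(t-t') u(t')^k\, dt'$ where $\mathcal{D}(\tau)$ is the Duhamel propagator with symbol $\frac{\sin(\tau\,\Im\lambda_+)}{\Im\lambda_+}e^{-\tau|\xi|/2}$, the key linear estimates are $\|S(t)(u_0,u_1)\|_{C([0,T];H^s)} \lesssim \|(u_0,u_1)\|_{\H^s}$ and, for the Duhamel term, $\big\|\int_0^t \mathcal{D}(t-t')F(t')\,dt'\big\|_{H^s} \lesssim \int_0^t (t-t')^{-\theta}\|F(t')\|_{H^{s-1+\theta}}\,dt'$ for $0 \le \theta < 1$.

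The core of the argument is then a product (multilinear) estimate in negative-regularity Sobolev spaces: one needs $\|u^k\|_{H^{\sigma}} \lesssim \|u\|_{H^s}^k$ for a suitable $\sigma$, so that choosing $\theta$ with $s - 1 + \theta = \sigma$ and $\theta < 1$ closes the iteration in $X_T = C([0,T];H^s)$. For the paired cases in \eqref{WPA} — quadratic in $d \le 3$ and cubic in $d = 1$ — the relevant fractional Leibniz / Sobolev-multiplication inequalities hold precisely in the range $s_{\rm vis} < s \le 0$; one checks that the Duhamel gain of $1-\theta$ derivatives compensates the derivative loss $k$-linear multiplication incurs at that regularity, with a time factor $T^{\delta}$ (from integrating $(t-t')^{-\theta}$ and from Hölder in $t$) giving the needed smallness for short time. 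I would verify the exponent bookkeeping: the condition that the map is a contraction on a ball of $X_T$ for $T$ small reduces exactly to $s > s_{\rm vis} = d(\tfrac12 - \tfrac1k) - \tfrac1k$, which is why this threshold appears. Uniqueness and continuous dependence follow from the same estimates applied to differences $u - v$ and $S(t)(u_0,u_1) - S(t)(v_0,v_1)$.

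The main obstacle is establishing the nonlinear product estimate at the sharp regularity: naively, $u^k$ lives in $H^{s}$ only when $s > 0$ or under Sobolev-type restrictions on $s$ relative to $d/2$, so one must carefully use the negative Sobolev space structure (e.g. duality, $H^s \cdot H^{\alpha} \hookrightarrow H^{s}$ for appropriate $\alpha$, or Littlewood–Paley/paraproduct decompositions) together with the freedom in $\theta$ to absorb the loss. In particular, handling the low-frequency behavior on $\M^d = \T^d$ versus $\R^d$ requires mild care, but since $s \le 0$ the estimates on the torus are not worse than on $\R^d$. The restriction to the specific $(d,k)$ in \eqref{WPA} is exactly the set where this balance can be struck; outside it, $s_{\rm vis} \le s_{\rm scal}$ and the viscous gain is not the governing mechanism. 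Once the multilinear estimate is in hand, the fixed-point argument is routine.
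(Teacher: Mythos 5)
There is a genuine gap at what you yourself call the core of the argument. You propose to close the contraction in $X_T = C([0,T];H^s)$ alone, using a multilinear estimate of the form $\|u^k\|_{H^{\sigma}} \lesssim \|u\|_{H^s}^k$ at regularity $s \le 0$. No such estimate holds: for $s<0$ the map $u \mapsto u^k$ is not even well defined on $H^s$, and no paraproduct, duality, or choice of $\theta$ in the Duhamel smoothing can repair this, because the obstruction sits in the high-high frequency interactions of the \emph{input} (two bumps of $\widehat u$ at frequencies $\pm N$ of height $N^{-s}$ have bounded $H^s$ norm but produce an unbounded zero-frequency output as $N \to \infty$). This is precisely the high-to-low energy transfer that the ill-posedness half of the paper exploits, so a scheme whose only nonlinear input is the $C_T H^s$ norm of $u$ cannot reach any $s<0$. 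In particular, already the first Picard iterate requires raising the free evolution $V(t)\vec u_0$ to the $k$-th power, and for data merely in $\H^s$, $s<0$, the free evolution is not better than $H^s$ uniformly in $t$; your framework gives no way to make sense of this term.

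The missing ingredient is an auxiliary, time-weighted norm at \emph{positive} regularity that quantifies the parabolic smoothing for $t>0$. The paper's proof works in $X(T)=C_T H^s \cap Y(T)$ with
\[
\| u \|_{Y(T)} = \sup_{0<t<T} t^{-s+d(\frac 12 - \frac 1k)} \| u(t) \|_{\dot H^{d(\frac 12 - \frac 1k)}},
\]
so that $\dot H^{d(\frac12-\frac1k)}(\M^d) \hookrightarrow L^k(\M^d)$ makes $u(t)^k$ a genuine $L^1$ function for each $t>0$; the nonlinearity is then estimated only in $L^1$ via $\|u^k\|_{L^1}=\|u\|_{L^k}^k$, and the Schauder estimate for the Poisson semigroup (Proposition \ref{prop:Schauder}, an $L^1 \to L^q$ or $L^1 \to \dot H^{d(\frac12-\frac1k)}$ smoothing bound with the explicit time singularity) brings the Duhamel term back into both components of $X(T)$. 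The same Schauder estimate, applied to the data, shows $\|V(t)\vec u_0\|_{Y(T)} \lesssim \|\vec u_0\|_{\H^s}$, which is what makes the first iterate meaningful. The threshold $s>s_{\rm vis}$ then emerges not from a product estimate but from the convergence of the time integrals $\int_0^t (t-t')^{-s-\frac d2+1}\, t'^{\,k\{s-d(\frac12-\frac1k)\}}\,\mathrm{d}t'$ (and its analogue for the $Y(T)$ component), with the restriction \eqref{WPA} ensuring both exponents are integrable. Your exponent bookkeeping heuristic lands on the right threshold, but without the two-norm (Kato-type) structure the scheme cannot be closed.
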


The condition \eqref{WPA} yields that $s_{\rm scal} \le s_{\rm vis}<0$.
Because of the presence of the dissipative effect in \eqref{vNLW},
the well-posedness holds even in the negative Sobolev space.
Note that
the Cauchy problem for the wave equation \eqref{NLW} is ill-posed in $\H^s(\M^d)$ for $s<0$.
See \cite{CCT, FO20}.

\begin{remark}
\rm
We believe that the argument developed by Kapitanski~\cite{Kap94} for the wave equation should be applicable to this equation, and then give well-posedness for \eqref{vNLW} in $H^s (\M^d)$ for $s > \max (s_{\rm scal} , 0)$.
See also Remark \ref{REM:wp1d} below and the appendix of \cite{Liu23} for the one and two dimensional cases, respectively.
\end{remark}


Let us give some details on norm inflation.
Christ-Colliander-Tao \cite{CCT} introduced norm inflation for the first time with a dispersionless ODE approach. See also \cite{BTz1, Xia, OW}.
Carles and his collaborators \cite{AC, BC, CK} also proved norm inflation with loss of regularity for the nonlinear Schr\"odinger equation by using geometric optics.

Bejenaru-Tao \cite{BT} used a Fourier analytic approach to prove the ill-posedness.
The idea is to expand the solution into a power series and use a {\it high-to-low energy transfer} in one of the terms to prove ill-posedness. Iwabuchi-Ogawa~\cite{IO} refined the idea afterwards to extend the ill-posedness result into a norm inflation result. This method was taken over by Kishimoto~\cite{Kis19} who developed it to prove norm inflation for the nonlinear Schr\"odinger equation, while Oh~\cite{Oh} introduced a way to index the power series with trees, and estimate all the terms separately. See also \cite{BH2, COW, CP, FO20, OOT, Ok, WZ} and \cite{Chevyrev} for an implementation of this argument in a probabilistic setting.

The worst interaction for \eqref{vNLW} in positive Sobolev spaces is the low-to-high energy transfer.
By using the ODE approach,
Kuan-\v{C}ani\'c \cite{KC} proved the norm inflation.
On the other hand,
we use the Fourier analytic approach in this article,
because the high-to-low energy transfer is the worst interaction in negative Sobolev spaces.

Let us discuss our strategy. First, we use the Wiener algebra $\FL^{0,1} (\M^d)$, which we define now. Given $\M = \R$ or $\T$, let $\widehat{\M}^d$ denote the Pontryagin dual of $\M^d$, i.e.
\begin{equation}
\notag
\widehat{\M}^d = 
\begin{cases}
\R^d & \text{if} \quad \M = \R, \\
\Z^d & \text{if} \quad \M = \T.
\end{cases}
\end{equation}

\noi Note that, when $\widehat{\M} = \Z$, we endow it with the counting measure. We can then define the following Fourier-Lebesgue spaces:

\begin{definition}[Fourier-Lebesgue spaces] \rm
\label{DEF:FLspaces}
For $s \in \R$ and $p \geq 1$, we define the Fourier-Lebesgue space $\FL^{s,p} (\M^d)$ as the completion of the Schwartz class of functions $\mathcal{S} ( \M^d)$ with respect to the norm
$$
\| f \|_{\FL^{s,p}(\M^d)} = \| \jb{\xi}^s \widehat{f}(\xi) \|_{L^p_\xi (\widehat{\M}^d)},
$$

\noi where $\jb{\xi} \coloneqq (1 + \abs{\xi}^2 )^{\frac 12}$.
\end{definition}

\noi The algebra property of $\FL^{0,1} (\M^d)$ allows us to prove the well-posedness in $\FL^{0,1} (\M^d)$ of \eqref{vNLW}.
As a result, the solution $u$ to \eqref{vNLW} can be expanded into a power series depending only on the initial data $\vec u_0 := (u_0, u_1)$,
which we denote
$$
u = \sum^\infty_{j = 0} \Xi_j (\vec u_0).
$$

\noi For any $j \geq 0$, $\Xi_j (\vec u_0)$ is a multilinear term in $\vec u_0$ of degree $(k-1)j +1$. In particular, these terms are the successive terms of a Picard iteration expansion. Then, the idea is to show by explicit computations that $\Xi_1 (\vec u_0)$ grows rapidly in a short time, while the other terms are controlled.

From the fact that the dispersive effect as well as the dissipative effect in \eqref{vNLW} is weak for a short time,
we show the lower bound of $\Xi_1 (\vec u_0)$ in a short time interval.
This lower bound holds only for a shorter interval than that in \cite{FO20}, because of the presence of the dissipative effect.
See Proposition \ref{PROP:MultilinearEstPart2}.
Since the same estimates as in \cite{FO20} are applicable to the higher iteration terms in this case,
we then obtain the norm inflation in $\H^s(\M^d)$ for $s<\min (s_{\rm scal},0)$.
Note that the infinite loss of regularity does not follow from this argument.
See Section~\ref{Sec3}.

For the norm inflation above the scaling critical regularity or the infinite loss of regularity,
we need to allow our solutions to have a slightly longer time of existence to show the blow-up of $\Xi_1 (\vec u_0)$.
Since the dispersive and dissipative effects appear,
a more precise calculation (compared to Proposition \ref{PROP:MultilinearEstPart2}) is needed to show another lower bound of $\Xi_1 (\vec u_0)$.
We prove this lower bound by assuming $k \le 5$ to avoid the case where contribution from the initial data is much more complicated.
This condition might be technical,
because we can prove the lower bound for even $k \ge 6$ at least.
However,
even if this condition is removed,
our ill-posedness result is not improved and we thus impose this assumption.
See Remark \ref{REM:evenk}.
On a parallel method inspired by Bourgain~\cite{Bou97},
this lower bound of $\Xi_1 (\vec u_0)$ gives us another result: the failure of $C^k$-continuity for the solution map.

We reach a point where the growth of the tail of the power series becomes important to show a clear dominance of $\Xi_1 (\vec u_0)$ and the new bound $s < - \frac 1k$ appears.
More precisely,
the condition to be well-posed in $\FL^{0,1} (\M^d)$ is too strong to show the norm inflation for some $s>s_{\rm scal}$.
We then directly estimate each of higher order iteration terms as in \cite{MO15, MO16}.
This argument works well for the quadratic case in $d=1,2,3$.
However, we can not obtain the norm inflation in $\H^s (\M^d)$ for $\max(s_{\rm scal}, -\frac 1k) \le s<s_{\rm vis} \, (\le 0)$ when $d=1$ and $k=3,4$,
since a condition on the convergence of the power series is too strong.
See Remark \ref{REM:ni34}.
Even in the positive Sobolev spaces, this gap appears.
Namely,
it is unclear whether \eqref{vNLW} is well-posed or ill-posed in $\H^s(\M)$ for $\max(s_{\rm scal},-\frac 1k) \le s \le s_{\rm vis}$
in the one dimensional case,
while
the third part of Theorem  \ref{THM:main}
implies that
the well-posedness in $\H^s(\M)$ does not follow from an iteration argument
at least
for $k \le 5$ and $s< s_{\rm vis}$.

Finally, we conclude this article with the proof of Theorem~\ref{thm:WP}.
Since the Schauder-type estimate is proved in Liu-Oh~\cite{LO22},
the standard argument yields the well-posedness in negative Sobolev spaces.
However, we give a proof of the well-posedness in Section~\ref{sec:WPofvNLW}, for completeness.

\section{Preliminary results and tree structure}

In this section,
following the argument as in \cite{Oh} (see also \cite{FO20}),
we prove the well-posedness for \eqref{vNLW} in the Wiener algebra $\FL^{0,1}(\M^d)$ and that the solution to our problem can be expanded into a power series. 

\subsection{Duhamel formulation and some notations}

First, we give in this subsection some notations for clarity. Taking the Fourier transform of the equation, we get the following Duhamel formulation:
\begin{equation}
\notag
\begin{split}
 \widehat{u}(t, \xi) = e^{- \frac 12 \abs{\xi}t} \bigg[ \cos\bigg(\frac{\sqrt{3}}{2}\abs{\xi} t\bigg) & + \frac{1}{\sqrt{3}} \sin\bigg(\frac{\sqrt{3}}{2} \abs{\xi}t\bigg) \bigg] \widehat{u}_0 (\xi) +  e^{- \frac 12 \abs{\xi}t} \frac{\sin\big(\frac{\sqrt{3}}{2}\abs{\xi} t\big)}{\frac{\sqrt{3}}{2}\abs{\xi}}  \widehat{u}_1 (\xi) 
	\\ & - \int^t_0  e^{- \frac 12 \abs{\xi}(t-t')} \frac{\sin\big(\frac{\sqrt{3}}{2}\abs{\xi}(t-t')\big)}{\frac{\sqrt{3}}{2}\abs{\xi}} \widehat{u^k}(t', \xi) \mathrm{d}t'.
\end{split}
\end{equation}

\noi Let us denote
$$
D = \F^{-1}_x ( \abs{\xi}).
$$

\noi Then, let $P(t)$ denote the Poisson kernel
\begin{equation}
\label{EQ:DefiPoissonKer}
P(t) = e^{-\frac 12 Dt}
\end{equation}

\noi and let $V_1(t)$ be defined by
\begin{equation}
\notag
V_1(t) = \frac{\sin\big(\frac{\sqrt{3}}{2} D t\big)}{\frac{\sqrt{3}}{2} D}.
\end{equation}

\noi We also denote 
\begin{equation}
\label{EQ:DefiW}
W(t) = P(t) \circ V_1(t) = e^{-\frac 12 Dt}\frac{\sin\big(\frac{\sqrt{3}}{2} D t\big)}{\frac{\sqrt{3}}{2} D}
\end{equation}

\noi and, for any appropriate functions $u_1, \dots, u_k$,
\begin{equation}
\label{EQ:DefiDuhamelOp}
I_k (u_1, \dots, u_k)(t)
= -\int^t_0
W(t - t')
\prod^k_{j = 1} u_j (t') \mathrm{d}t'.
\end{equation}

\noi Similarly, let us denote
\begin{equation}
\label{EQ:DefiU}
U(t)(u_0, u_1) = \bigg[ \cos\bigg(\frac{\sqrt{3}}{2}D t\bigg) + \frac{1}{\sqrt{3}} \sin\bigg(\frac{\sqrt{3}}{2} D t\bigg) \bigg] u_0 + V_1(t) u_1 \eqqcolon V_0 (t) u_0 + V_1 (t) u_1
\end{equation}

\noi and 
\begin{equation}
\label{EQ:DefiV}
V(t) = P(t) \circ U(t).
\end{equation}

\noi Therefore, we can write the Duhamel formulation of \eqref{vNLW} as
\begin{equation}
\label{EQ:DuhamelNotations}
u(t) = V(t) \vec{u}_0 + I_k (u)(t)
\end{equation}

\noi where $\vec u_0 \coloneqq (u_0, u_1)$ and we used the shorthand notation $I_k(u) = I_k (u, \dots, u)$.

\subsection{Tree structure and power series expansion}

In order to prove norm inflation, we want to expand the solution to \eqref{vNLW} into a power series. To do so, we exploit the multilinearity of the Duhamel operator $I_k$ defined in \eqref{EQ:DefiDuhamelOp} and iterate the Duhamel formula \eqref{EQ:DuhamelNotations}. From this process, the power series we get is naturally indexed by a tree structure.
The goal of this subsection is to define this tree structure and state a power series expansion depending only on the initial data. First, we have the following definition of trees:
 \begin{definition} [$k$-ary trees]
\label{DEF:Trees}
\rm 
\begin{enumerate}
	\item Given a set $\TT$ with partial order $\leq$, we say that $b \in \TT$, with $b \leq a$ and $b \ne a$ for some $a \in \TT$, is a child of $a$ if  $b\leq c \leq a$ implies either $c = a$ or $c = b$. If the latter condition holds, we also say that $a$ is the parent of $b$.

	\item A tree $\TT$ is a finite partially ordered set,  satisfying the following properties\footnote{We do not identify two trees even if there is an order-preserving bijection between them.}:
\begin{itemize}
	\item Let $a_1, a_2, a_3, a_4 \in \TT$. If $a_4 \leq a_2 \leq a_1$ and  $a_4 \leq a_3 \leq a_1$, then we have $a_2\leq a_3$ or $a_3 \leq a_2$.

	\item A node $a\in \TT$ is called terminal if it has no child. A non-terminal node $a\in \TT$ is a node with exactly $k$ children.

	\item There exists a maximal element $r \in \TT$ (called the root node) such that $a \leq r$ for all $a \in \TT$.

	\item $\TT$ consists of the disjoint union of $\TT^0$ and $\TT^\infty$, where $\TT^0$ and $\TT^\infty$denote the collections of non-terminal nodes and terminal nodes, respectively.
\end{itemize}

	\item Let $\BT(j)$ denote the set of all trees with $j$ non-terminal nodes.
\end{enumerate}
\end{definition} 

\noi Note that a tree of $j$-th generation $\TT \in \BT(j)$ has $kj + 1$ nodes, of which $j$ are non-terminal nodes. Besides, we have the following bound on the number of trees of $j$-th generation (See Lemma 2.3 in \cite{Oh}):

\begin{lemma}
\label{LEM:NumberOfTrees}
There exists a constant $C_0 > 0$ such that, for any $j \in \NB_0 := \NB \cup \{0\}$, we have
\begin{equation}
\notag
\abs{\BT (j)} \leq C^j_0.
\end{equation}
\end{lemma}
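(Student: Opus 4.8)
The plan is to bound $b_j := |\BT(j)|$ by a recursion that decouples a tree into the $k$ subtrees hanging off its root, and then to read off exponential growth from the associated generating function.

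First I would isolate the combinatorial recursion. Since the only tree with no non-terminal node is the single-node tree (whose root is terminal), $b_0 = 1$. For $j \ge 1$ the root $r$ of any $\TT \in \BT(j)$ must be non-terminal, hence has exactly $k$ children $c_1,\dots,c_k$; writing $\TT_i := \{a \in \TT : a \le c_i\}$ for the subtree rooted at $c_i$, one has the disjoint decomposition $\TT = \{r\} \sqcup \TT_1 \sqcup \cdots \sqcup \TT_k$, each $\TT_i$ is again a $k$-ary tree, and if $\TT_i$ has $j_i$ non-terminal nodes then $j_1 + \cdots + j_k = j-1$. Since $\TT$ is determined by the $k$-tuple of its root subtrees --- and regardless of whether one regards these as ordered or as an unordered multiset --- this gives
\[
b_j \;\le\; \sum_{\substack{j_1,\dots,j_k \ge 0\\ j_1+\cdots+j_k = j-1}} b_{j_1}\cdots b_{j_k}, \qquad j \ge 1 .
\]

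Next I would pass to a majorant generating function. Define $\wt b_0 = 1$ and $\wt b_j := \sum_{j_1+\cdots+j_k=j-1}\wt b_{j_1}\cdots\wt b_{j_k}$ for $j\ge 1$; a straightforward induction on $j$ using the displayed recursion yields $b_j \le \wt b_j$ for all $j \ge 0$. The formal power series $B(x) := \sum_{j\ge 0}\wt b_j x^j$ then solves the algebraic equation $B(x) = 1 + x\,B(x)^k$. Applying the analytic implicit function theorem to $F(x,y) = y - 1 - x y^k$ near $(0,1)$ --- where $F(0,1)=0$ and $\partial_y F(0,1)=1\ne 0$ --- shows that $B$ is the Taylor expansion at the origin of a function analytic in a neighbourhood of $0$, so its radius of convergence $\rho$ is positive and $\wt b_j \le C\rho^{-j}$ for some $C \ge 1$; equivalently, Lagrange inversion gives the closed form $\wt b_j = \tfrac{1}{(k-1)j+1}\binom{kj}{j} \le \binom{kj}{j}\le 2^{kj}$, which already suffices with $\rho^{-1} = 2^k$. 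Taking $C_0 := \max(C,1)/\rho$ (or simply $C_0 = 2^k$) then gives $b_j \le \wt b_j \le C_0^{\,j}$ for all $j \ge 0$, the case $j=0$ being the trivial equality $b_0 = 1 = C_0^{0}$.

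The only point requiring a little care is the coefficientwise comparison $b_j \le \wt b_j$, which is the induction closing the argument. It is worth stressing that a direct induction aiming at $b_j \le C_0^{\,j}$ does not close, since the number of compositions $j_1+\cdots+j_k=j-1$ grows polynomially in $j$ and would accumulate against the exponential --- it is precisely the generating-function (equivalently, explicit Fuss--Catalan) input that absorbs these polynomial factors. The statement is Lemma 2.3 of \cite{Oh}, and one could alternatively simply invoke it.
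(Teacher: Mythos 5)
Your proof is correct. The paper does not actually prove this lemma --- it simply cites Lemma 2.3 of \cite{Oh} --- and your argument (root decomposition, the recursion $b_j \le \sum_{j_1+\cdots+j_k=j-1} b_{j_1}\cdots b_{j_k}$, and the resulting Fuss--Catalan bound $\frac{1}{(k-1)j+1}\binom{kj}{j} \le 2^{kj}$) is exactly the standard counting argument underlying that citation, so it matches the intended proof; your observation that the naive induction on $b_j \le C_0^j$ does not close, and that one needs either the generating function or a weighted induction, is also accurate.
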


From now on, for any $\vec{\phi} \in \FLv^1 (\M^d) \coloneqq \FL^{0,1}(\M^d) \times \FL^{-1,1} (\M^d)$, we associate to any tree $\TT \in \BT (j)$, $j \geq 0$, a space-time distribution $\Psi (\TT) (\vec{\phi}) \in \mathcal{D}' \left( (-T,T) \times \M^d \right)$ as follows:
\begin{itemize}
	\item replace a non-terminal node by Duhamel integral operator $\I_k$ with its $k$ arguments being the children of the node,
	\item replace a terminal node by $V(t) \vec{\phi}$.
\end{itemize}

\noi For any $j \geq 0$ and $\vec{\phi} \in \FLv^1(\M^d)$, we also define $\Xi_j$ as follows : 
\begin{equation}
\notag
\Xi_j (\vec{\phi}) = \sum_{\TT \in \BT (j)} \Psi(\TT)( \vec{\phi} ).
\end{equation}

\noi Let us now state the local well-posedness in the Wiener algebra $\FL^1(\M^d)$.
Since
$e^{-\frac 12|\xi|t} \le 1$,
the proof is the same as Lemma 2.4 in \cite{FO20}
and we thus omit the proof here.

\begin{lemma}
\label{LEM:ExistenceOfSolution}
Let $M > 0$. Then, for any time $T$ such that $0 < T \ll \min(M^{-\frac{k-1}{2} }, 1)$ and $\vec{u}_0 \in \FLv^1(\M^d)$ with $\| \vec{u}_0 \|_{\FLv^1} \leq M$, 
we have the following:
\begin{enumerate}
	\item there exists a unique solution $u \in C ([0,T]; \FL^1 (\M^d))$ to our problem \eqref{vNLW} satisfying $(u, \dt u)|_{t = 0} = \vec{u}_0$.
	\item Moreover, u can be expressed as
\begin{equation}
\notag
u = \sum_{j = 0}^\infty \Xi_j (\vec{u}_0) = \sum_{j = 0}^\infty \sum_{\TT \in \BT (j)} \Psi(\TT)(\vec{u}_0)
\end{equation}
in $C ([0,T]; \FL^1 (\M^d))$.
\end{enumerate}

\end{lemma}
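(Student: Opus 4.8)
The plan is to run a standard contraction-mapping argument for the Duhamel map
$\Gamma u(t) = V(t)\vec u_0 + I_k(u)(t)$
in the Banach space $X_T := C([0,T];\FL^1(\M^d))$, and then to justify the power-series expansion by iterating the same fixed-point map and controlling the remainder. First I would record the two ingredients that make $\FL^1$ the natural space here: (i) the algebra property $\|fg\|_{\FL^{0,1}} \lesssim \|f\|_{\FL^{0,1}}\|g\|_{\FL^{0,1}}$, which follows from Young's inequality on $\widehat{\M}^d$ since $\FL^{0,1}$ is the Fourier image of $L^1$ under convolution, and (ii) the uniform-in-$t$ bound on the linear propagators. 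For the latter, the point is that the Fourier multipliers defining $V_0(t),V_1(t),W(t)$ all carry the damping factor $e^{-\frac12|\xi|t}\le 1$; combined with the elementary bounds $|\cos(\tfrac{\sqrt3}{2}|\xi|t)| \le 1$, $|\tfrac{1}{\sqrt3}\sin(\tfrac{\sqrt3}{2}|\xi|t)|\lesssim 1$, and $\big|\tfrac{\sin(\frac{\sqrt3}{2}|\xi|t)}{\frac{\sqrt3}{2}|\xi|}\big| \lesssim \min(t,\langle\xi\rangle^{-1}) \lesssim \langle\xi\rangle^{-1}$, one gets
$\|V(t)\vec\phi\|_{\FL^{0,1}} \lesssim \|\vec\phi\|_{\FLv^1}$ and $\|W(t)f\|_{\FL^{0,1}} \lesssim \|f\|_{\FL^{-1,1}} \le \|f\|_{\FL^{0,1}}$,
uniformly for $t\in[0,T]$, $T\le 1$.

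With these in hand, the contraction estimate is routine: for $\|u\|_{X_T},\|v\|_{X_T}\le 2CM$ one has
$\|\Gamma u - \Gamma v\|_{X_T} \le \int_0^T \|W(t-t')\big(\prod u - \prod v\big)\|_{\FL^{0,1}}\,dt' \lesssim T\,(CM)^{k-1}\|u-v\|_{X_T}$,
and similarly $\|\Gamma u\|_{X_T} \le CM + CT(CM)^k$. Choosing $T$ with $0<T\ll\min(M^{-(k-1)},1)$ — which is what the hypothesis $0<T\ll\min(M^{-\frac{k-1}{2}},1)$ guarantees, the square root being slack — makes $\Gamma$ a contraction on the ball of radius $2CM$ in $X_T$, giving part (1). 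For part (2), I would substitute the Duhamel formula into itself repeatedly: writing $u = V(t)\vec u_0 + I_k(u)$ and expanding $I_k$ multilinearly using $u = \sum_{j\le N}\Xi_j(\vec u_0) + R_N$, one checks by induction that the degree-$((k-1)j+1)$ homogeneous part is exactly $\Xi_j(\vec u_0) = \sum_{\TT\in\BT(j)}\Psi(\TT)(\vec u_0)$, matching the tree bookkeeping in the definitions of $\Psi(\TT)$ and $\Xi_j$. To see the series converges in $X_T$, I would prove the per-generation bound $\|\Xi_j(\vec u_0)\|_{X_T} \le C_0^j (CT)^j (CM)^{(k-1)j+1} \cdot c^j$ for some absolute $c$, using Lemma~\ref{LEM:NumberOfTrees} to count trees and the multilinear $W$-estimate once per non-terminal node; summing a geometric series with ratio $\ll 1$ (again by the smallness of $T$) shows $\sum_j \Xi_j(\vec u_0)$ converges absolutely in $X_T$ and solves \eqref{EQ:DuhamelNotations}, hence equals $u$ by uniqueness.

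The only genuinely delicate point — and the place where one must be slightly careful rather than merely cite \cite{FO20} — is the bookkeeping that identifies the $j$-th Picard iterate with the tree sum $\Xi_j$, i.e. verifying that expanding $I_k\big(\sum_i \Xi_i\big)$ and collecting terms of a fixed multilinear degree reproduces exactly the grafting operation on $k$-ary trees (each non-terminal node receiving as children the $k$ sub-trees coming from the $k$ slots of $I_k$). Since $e^{-\frac12|\xi|t}\le 1$ makes every propagator estimate at least as good as in the damping-free wave case, all the quantitative estimates are literally those of \cite[Lemma~2.4]{FO20}, so I would simply invoke that lemma for the analytic content and limit the exposition to the remark about the uniform propagator bounds above; this is precisely why the statement says ``the proof is the same as Lemma 2.4 in \cite{FO20} and we thus omit the proof here.''
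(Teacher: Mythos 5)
Your overall route (contraction mapping in $C([0,T];\FL^{0,1})$ via the algebra property and the damping bound $e^{-\frac12|\xi|t}\le 1$, followed by the tree-indexed power series with Lemma~\ref{LEM:NumberOfTrees} counting the trees) is exactly the intended argument, i.e.\ the one of Lemma~2.4 in \cite{FO20}. However, there is a genuine quantitative gap in how you treat the Duhamel operator, and it propagates into a false implication at the decisive step. You bound $W(t-t')$ only by $\|W(t-t')f\|_{\FL^{0,1}}\lesssim \|f\|_{\FL^{0,1}}$, discarding the time factor, so your contraction estimate reads $\|\Gamma u-\Gamma v\|_{X_T}\lesssim T M^{k-1}\|u-v\|_{X_T}$ and your per-generation bound carries $T^j$; this forces the smallness condition $T\ll M^{-(k-1)}$. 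You then assert that the hypothesis $T\ll M^{-\frac{k-1}{2}}$ guarantees this, ``the square root being slack'' --- but the slack goes the wrong way: for $M\ge 1$ one has $M^{-(k-1)}\le M^{-\frac{k-1}{2}}$, so $T\ll M^{-\frac{k-1}{2}}$ does \emph{not} imply $T\ll M^{-(k-1)}$. This matters precisely in the regime the lemma is used: in Sections~\ref{Sec3}--\ref{Sec4} one takes $M=RA^d\gg 1$ and chooses $T$ essentially as large as $(RA^d)^{-\frac{k-1}{2}}$ allows (see \eqref{EQ:CondOnT1} and the choice of $T$ in Subsection~\ref{Subsec3.2}), which is far above your threshold $M^{-(k-1)}$, so your version of the lemma would not support the norm inflation argument.

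The fix is the sharper pointwise multiplier bound
\begin{equation*}
\Big| e^{-\frac12|\xi|(t-t')}\,\frac{\sin\big(\frac{\sqrt3}{2}|\xi|(t-t')\big)}{\frac{\sqrt3}{2}|\xi|}\Big| \le t-t' ,
\end{equation*}
i.e.\ keep the factor $t-t'$ rather than $\langle\xi\rangle^{-1}$ (you even wrote the bound $\min(t,\langle\xi\rangle^{-1})$ but then used the wrong half of it). With this, $\|I_k(u)(t)\|_{\FL^{0,1}}\le \int_0^t (t-t')\|u(t')\|_{\FL^{0,1}}^k\,dt'\lesssim T^2\|u\|_{X_T}^k$, the contraction constant becomes $T^2M^{k-1}$, and the per-generation bound acquires $T^{2j}$ --- consistent with the factors $t^{2j}$ in \eqref{EQ:MEst3} --- so the series converges and the fixed point exists exactly under the stated condition $T\ll\min(M^{-\frac{k-1}{2}},1)$. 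With that correction your argument, including the grafting/bookkeeping identification of the $j$-th Picard iterate with $\sum_{\TT\in\BT(j)}\Psi(\TT)(\vec u_0)$, goes through and coincides with the omitted proof.
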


\section{A first result of norm inflation}
\label{Sec3}

In this section, we give the proof for a first result of norm inflation, namely we prove part \ref{THM:mainA} of Theorem \ref{THM:main} under the additional conditions
\begin{equation}
\label{eq:condspart1}
\bigg\{- \frac d2 < s < \min(s_{\rm scal}, 0) \text{ and } k > 1 + \frac 2d \bigg\} \quad \text{ or } \quad \bigg\{s \leq - \frac d2 \text{ and } k \geq 5 \bigg\}.
\end{equation}
\noi Note that the case $s \le - \frac d2$ and $k \leq 4$ is implied by the second part of Theorem~\ref{THM:main}, which we prove in Section~\ref{Sec4}.
See Remark~\ref{REM:ProofCompletes}. We also recall that $s_{\rm scal}$ is the scaling critical regularity given in \eqref{EQ:sc}.

In order to prove this section's result, our main goal is really to prove the following proposition.

\begin{proposition}
\label{PROP:NIatGIDn}
Let $ d \geq 1$ and $(s,k)$ satisfying the assumption \eqref{eq:condspart1}. Fix $(u_0, u_1) \in \mathcal{S} ( \M^d) \times \mathcal{S}(\M^d)$. Then, for any $n \in \NB$, there exists a solution $u_n$ to \eqref{vNLW} and $t_n \in (0, \frac 1n )$ such that
\begin{equation}
\notag
\| (u_n (0), \dt u_n (0)) - (u_0 , u_1) \|_{\H^s} < \frac 1n \quad \text{ and } \quad \| u_n (t_n ) \|_{H^s} > n.
\end{equation}
\end{proposition}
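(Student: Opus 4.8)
The plan is to follow the standard Fourier-analytic norm inflation scheme of Bejenaru--Tao / Iwabuchi--Ogawa as implemented in \cite{Oh, FO20}, adapted to the viscous wave operator. We choose initial data of the form $\vec u_{0,n} = \vec u_0 + \vec\phi_n$, where $\vec\phi_n$ is a small, highly oscillatory perturbation: concretely $\widehat{\phi_n}$ a sum of $\sim N$ frequency bumps supported near $\pm\xi^{(1)},\dots,\pm\xi^{(N)}$ of size $\sim R$ (with $N$, $R$, and the amplitude $A$ free parameters to be tuned), chosen so that $\|\vec\phi_n\|_{\H^s}$ is as small as we like. The arithmetic of the frequencies is arranged so that in the first Picard iterate $\Xi_1(\vec u_{0,n})$ there is a resonant \emph{high-to-low energy transfer}: products of the high-frequency waves produce output concentrated at low frequency (say near the zero mode or a fixed bounded frequency), where the $H^s$-norm with $s<0$ is large. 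Since $\vec u_0$ is fixed and Schwartz, its contribution to $\Xi_1$ is harmless, and the cross terms between $\vec u_0$ and $\vec\phi_n$ are lower order; the dominant piece comes from the purely-perturbative term.

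The key steps, in order, are: (i) fix the perturbation $\vec\phi_n$ and compute a lower bound for $\|\Xi_1(\vec u_{0,n})(t_n)\|_{H^s}$ at a suitably chosen small time $t_n\in(0,1/n)$ — this is where the explicit form of $W(t)=e^{-\frac12 Dt}\frac{\sin(\frac{\sqrt3}{2}Dt)}{\frac{\sqrt3}{2}D}$ enters; for $t$ small and frequencies $\lesssim R$ with $Rt_n$ bounded, $W(t-t')\approx (t-t')$ and the Poisson factor $e^{-\frac12|\xi|(t-t')}\approx 1$ on the relevant frequency support, so $\Xi_1$ behaves like $-t^2/2$ times the low-frequency part of $\big(\sum V_0(0)\text{-pieces}\big)^k$, giving growth $\gtrsim A^k t_n^2 N^{?}$ in the low mode; (ii) invoke the higher-iterate bounds — by Lemma~\ref{LEM:ExistenceOfSolution} and the argument of \cite{FO20} (which applies verbatim here since $e^{-\frac12|\xi|t}\le 1$), show $\big\|\sum_{j\ge2}\Xi_j(\vec u_{0,n})(t_n)\big\|_{\FL^1}$, hence its $H^s$-norm, is negligible compared with the $\Xi_1$ lower bound; (iii) tune $A,N,R$ and $t_n$ against the constraints (smallness of $\|\vec\phi_n\|_{\H^s}$, $t_n<1/n$, $Rt_n\lesssim 1$, convergence of the series which forces $t_n\lesssim (AN^{1/p'}\cdots)^{-(k-1)/2}$-type bounds, and the desired lower bound $\ge n$); this is exactly the point where the two regimes in \eqref{eq:condspart1} — $-\frac d2<s<\min(s_{\rm scal},0)$ with $k>1+\frac2d$, versus $s\le-\frac d2$ with $k\ge5$ — get separated, since the scaling of the various quantities in $N$ and $R$ changes depending on whether $s$ sits above or below $-d/2$.

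Finally, (iv) deduce the statement: set $u_n := $ the solution with data $\vec u_{0,n}$ furnished by Lemma~\ref{LEM:ExistenceOfSolution}, so that $\|(u_n(0),\dt u_n(0))-(u_0,u_1)\|_{\H^s} = \|\vec\phi_n\|_{\H^s}<1/n$ by construction, while $\|u_n(t_n)\|_{H^s}\ge \|\Xi_1(\vec u_{0,n})(t_n)\|_{H^s} - \sum_{j\ne1}\|\Xi_j(\vec u_{0,n})(t_n)\|_{H^s} > n$ by (i)--(iii). I expect the main obstacle to be step (i) combined with the bookkeeping in (iii): getting a \emph{clean} lower bound on the low-frequency output of $\Xi_1$ requires checking that the $k$-fold sum over frequency tuples $\xi_1\pm\cdots\pm\xi_k$ landing in the low-frequency window has a genuinely coherent (non-cancelling) main term — one typically forces this by pairing frequencies as $\xi^{(i)}$ with $-\xi^{(i)}$ so that cosines rather than oscillatory phases appear — and then simultaneously satisfying all the parameter inequalities, in particular reconciling the requirement $Rt_n\lesssim 1$ (so the viscous and dispersive factors are $\approx 1$) with the need for $t_n^2$ to overcome the small amplitude; the restriction $k\ge5$ in the low-regularity regime is precisely an artifact of making these inequalities compatible. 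The presence of the dissipative factor $e^{-\frac12|\xi|(t-t')}$ only helps in the higher-iterate estimates (it is $\le1$) but slightly shrinks the admissible time window for the $\Xi_1$ lower bound compared to \cite{FO20}, which is the reason Proposition~\ref{PROP:MultilinearEstPart2} is stated with a shorter interval.
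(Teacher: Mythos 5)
Your plan is structurally the paper's own argument: perturb the given Schwartz data by a high-frequency profile, expand the solution via the $\FL^{0,1}$ well-posedness of Lemma~\ref{LEM:ExistenceOfSolution}, lower-bound the first Picard iterate through a high-to-low transfer in the short time window where the dissipative and dispersive factors are inert (this is exactly Proposition~\ref{PROP:MultilinearEstPart2}, with the window $t\ll N^{-1}$ dictated by the \emph{input} frequencies, as you correctly note), control $\Xi_0$, the cross terms and $\Xi_j$, $j\ge 2$, by the FO20-type bounds (Lemma~\ref{LEM:MEstPart1}), and close by tuning the parameters, with the dichotomy in \eqref{eq:condspart1} emerging from whether $s$ lies above or below $-\tfrac d2$. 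So the route is the same; the differences are in two places where your sketch, as written, would not yet deliver the stated range.

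First, the shortcut ``the tail is small in $\FL^{0,1}$, hence in $H^s$'' is false on $\M=\R$: there is no embedding $\FL^{0,1}(\R^d)\hookrightarrow H^s(\R^d)$ for any $s$ (concentrated Fourier data give $\|\widehat f\|_{L^1}\sim 1$ with $\|f\|_{H^s}$ arbitrarily large). What is actually needed is the structured iterate bound \eqref{EQ:MEst3}, whose extra factor $1+Rg_s(A)$ (with $g_s$ as in \eqref{DEF:gs}) replaces the naive $\FL^{0,1}$ factor; since you invoke the FO20 argument anyway, this is repairable, but it is not a consequence of smallness in the Wiener algebra. Second, and more seriously, the unspecified exponent in your lower bound ``$\gtrsim A^k t_n^2 N^{?}$'' is precisely where the range in \eqref{eq:condspart1} is decided. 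The paper takes \emph{four cubes of side $A$} at $\pm Ne_1,\pm 2Ne_1$ (see \eqref{EQ:DefiOmega}--\eqref{EQ:DefiSigma}; the $\pm N,\pm 2N$ arithmetic is also what handles odd $k$, where mere $\pm$ pairing does not return to frequency zero), so that the resonant output fills the whole cube $Q_A$ and the $H^s$ norm gains the factor $g_s(A)=\|\jb{\xi}^s\|_{L^2(Q_A)}$; this gain is what pushes the admissible range up to $-\tfrac d2<s<\min(s_{\rm scal},0)$ in the first regime. If instead the resonant output of your $\sim N$ separated bumps concentrates on boundedly many unit-size low modes, the same bookkeeping (with $T\ll(\text{high frequency})^{-1}$ and the data-smallness constraint) only closes for $s<d(\tfrac12-\tfrac1k)-\tfrac2k$, which is strictly below $s_{\rm scal}$ whenever $k>1+\tfrac2d$, so the first case of \eqref{eq:condspart1} would be missed. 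In other words, the ``width'' parameter must be built into the perturbation and tracked through both the $\Xi_1$ lower bound and the tail bound, exactly as the paper does with $A$; once that is specified, your outline matches the proof in Section~\ref{Sec3}.
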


Indeed, assume Proposition \ref{PROP:NIatGIDn} is proved and fix $(u_0, u_1) \in \H^s (\M^d)$ and $\eps > 0$. By density, there exist $(v_0, v_1) \in \mathcal{S} ( \M^d) \times \mathcal{S}(\M^d)$ such that
$$
\| (v_0, v_1) - (u_0 , u_1) \|_{\H^s} < \frac \eps 2.
$$

\noi Besides, choose $n \in \NB$ such that $n > \frac{2}{\eps}$. According to Proposition~\ref{PROP:NIatGIDn}, there exists a solution $u_n$ to \eqref{vNLW} and a time $t_n \in (0, \frac 1n ) \subset (0, \eps)$ such that 
$$
\| (u_n (0), \dt u_n (0)) - (v_0 , v_1) \|_{\H^s} < \frac 1n < \frac{\eps}{2} \quad \text{ and } \quad \| u_n (t_n ) \|_{H^s} > n > \eps^{-1},
$$

\noi and our result follows by a triangular inequality.

\subsection{Preliminary notations and multilinear estimates}

In this subsection, we construct the sequence solutions $(u_n )_{n \in \NB}$ mentioned in Proposition \ref{PROP:NIatGIDn} by choosing specific initial data. To do so, we work with parameters $N = N(n) \gg 1$, $R = R(N)$ and $A = A(N) \ll N$ and we use them to define $\vec{\phi}_n = (\phi_n, 0)$ by

\begin{equation}
\label{EQ:Defiphi_n}
\widehat{\phi_n} = R \ind_\O
\end{equation}

\noi where 

\begin{equation}
\label{EQ:DefiOmega}
\O = \bigcup_{\eta \in \Si} (\eta + Q_A)
\end{equation}

\noi with $Q_A = [-\frac A8 , \frac A8]^d$,

\begin{equation}
\label{EQ:DefiSigma}
\Si = \{ -2N e_1, -N e_1, N e_1, 2N e_1\}
\end{equation}

\noi and $e_1 = (1,0, \dots,0) \in \ft{\M}^d$. Here, $\ind_\O$ denotes the characteristic function of the set $\O$. Note that the condition $A \ll N$ ensures the union \eqref{EQ:DefiOmega} is disjoint. We also denote

\begin{enumerate}
	\item $\vec{u}_{0,n} = \vec{u}_0 + \vec{\phi}_n$ where $\vec{u}_0 =(u_0,u_1)$ is fixed,
	\item $u_n$ the solution to \eqref{vNLW} with initial data $\vec{u}_{0,n}$.
\end{enumerate}

\noi Since $\vec{u}_0$ is fixed and smooth, let us assume that 
\begin{equation}
\notag
\|\vec u_0 \|_{\H^0} \sim \| \vec{u}_0 \|_{\FLv^1} \sim 1 \ll RA^d = \| \vec{\phi}_n \|_{\FLv^1}.
\end{equation}

\noi Now, suppose that 

\begin{equation}
\label{EQ:CondOnT1}
0 < T \ll \min \big( (RA^d)^{-\frac{k-1}{2}} , 1 \big).
\end{equation}

\noi Then, according to Lemma \ref{LEM:ExistenceOfSolution}, there exists a unique solution to \eqref{vNLW} with initial data $\vec{u}_{0, n}$, and we can expand this solution into the following power series:
\begin{equation}
\label{EQ:DecOfun}
u_n = \Xi_0 (\vec{u}_{0,n})+  \Xi_1 (\vec{\phi}_{n}) + \left( \Xi_1 (\vec{u}_{0,n})- \Xi_1 (\vec{\phi}_{n}) \right)  +  \sum^{\infty}_{j = 2} \Xi_j (\vec{u}_{0,n}).
\end{equation}

\noi The idea is to estimate each of these terms. First, we get the following lemma.
Since the proof is a repetition of Lemma 3.2 in \cite{FO20},
we omit the proof here.

\begin{lemma}
\label{LEM:MEstPart1}

For any $ s <0$, $t \ge 0$ and $j \in \NB$, the following estimates hold:
\begin{align}
\| \vec{u}_{0,n} - \vec{u}_0 \|_{\H^s} & \sim RN^s A^{\frac d2}, \label{EQ:EstID} \\
\| \Xi_0 (\vec{u}_{0,n})(t) \|_{H^s} & \les 1 + R N^sA^{\frac d2},
\notag
\\
\| \Xi_1 (\vec{u}_{0,n})(t) - \Xi_1 (\vec{\phi}_n) (t) \|_{H^s} & \les t^2 (R A^d)^{k-1}, 
\notag
\\
\| \Xi_j (\vec{u}_{0,n})(t) \|_{H^s} & \les C^j t^{2j} (R A^d)^{(k-1)j} \left( 1 +  R g_s(A) \right) \label{EQ:MEst3}
\end{align}

\noi where $g_s (A)$ is defined by 
\begin{equation}
\label{DEF:gs}
g_s (A) \coloneqq
\begin{cases}
 1 & \textup{if } s<-\frac{d}{2}, \\
 \left( \log \jb{A} \right)^{\frac{1}{2}}& \textup{if } s=-\frac{d}{2}, \\
 A^{\frac{d}{2}+s} & \textup{if }  s>-\frac{d}{2}.
\end{cases}
\end{equation}

\end{lemma}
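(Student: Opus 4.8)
The plan is to reprise the argument of \cite[Lemma~3.2]{FO20}; the term $\sqrt{-\Dl}\dt u$ only inserts the dissipative factor $e^{-\frac12|\xi|t}$ into $P(t)$, hence into $V(t)$ and $W(t)$, and since $e^{-\frac12|\xi|t}\le 1$ it may simply be discarded in every upper bound. Two elementary facts power all four estimates: the $V_0(t)$-multiplier $\cos(\cst|\xi|t)+\tfrac1{\sqrt3}\sin(\cst|\xi|t)$ is uniformly bounded, and the $W(t)$-multiplier obeys $\bigl|e^{-\frac12|\xi|t}\tfrac{\sin(\cst|\xi|t)}{\cst|\xi|}\bigr|\le\min\bigl(t,\tfrac2{\sqrt3|\xi|}\bigr)\le t$, so that $\|W(t-t')f\|_{H^\sigma}\les(t-t')\|f\|_{H^\sigma}$; moreover, for bounded $t'$ one has $\bigl|\ft{V(t')\vec u_0}(\xi)\bigr|\les\jb\xi^{-M}$ for every $M>0$ by smoothness of $\vec u_0$. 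Granting these, \eqref{EQ:EstID} is immediate: since $A\ll N$, every $\xi\in\O$ has $|\xi|\sim N$ and $|\O|=4|Q_A|\sim A^d$, so $\|\phi_n\|_{H^s}^2=R^2\int_\O\jb\xi^{2s}\,d\xi\sim R^2N^{2s}A^d$, while the second component of $\vec\phi_n$ vanishes. For the $\Xi_0$-bound, split $\Xi_0(\vec u_{0,n})(t)=V(t)\vec u_{0,n}=\bigl(P(t)V_0(t)u_0+P(t)V_1(t)u_1\bigr)+P(t)V_0(t)\phi_n$: the first group is $\les\|\vec u_0\|_{\H^s}\les 1$ by boundedness of the multipliers, and $\|P(t)V_0(t)\phi_n\|_{H^s}\le\|\phi_n\|_{H^s}\sim RN^sA^{\frac d2}$.

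Since $|\BT(1)|=1$ we have $\Xi_1(\vec\psi)=I_k\bigl(V(t)\vec\psi,\dots,V(t)\vec\psi\bigr)$, so expanding $\vec u_{0,n}=\vec u_0+\vec\phi_n$ by multilinearity of $I_k$ writes $\Xi_1(\vec u_{0,n})$ as $\Xi_1(\vec\phi_n)$ plus finitely many terms, each carrying at least one factor $V(t)\vec u_0$. For such a term the $W$-multiplier bound gives $\bigl|\ft{I_k(w_1,\dots,w_k)}(t,\xi)\bigr|\les t^2\sup_{0\le t'\le t}\bigl|\ft{w_1}\ast\cdots\ast\ft{w_k}\bigr|(\xi)$; putting one Schwartz factor $\ft{V(t')\vec u_0}$ into $L^2_\xi$ (norm $\les 1$), the remaining $\le k-1$ factors $\ft{V(t')\vec\phi_n}$ into $L^1_\xi$ (each of norm $\les RA^d$), and using $\jb\xi^s\le 1$, gives $\les t^2(RA^d)^{k-1}$ per term, hence for the whole difference.

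The substantive estimate is \eqref{EQ:MEst3}. By Lemma~\ref{LEM:NumberOfTrees} it suffices to bound $\|\Psi(\TT)(\vec u_{0,n})(t)\|_{H^s}$ for one $\TT\in\BT(j)$ and then pay $|\BT(j)|\le C_0^j$. Proceeding up from the leaves, applying $\bigl|\ft{W(\tau)g}\bigr|\les\tau\,|\ft g|$ at each of the $j$ non-terminal nodes together with the monotonicity of convolution of nonnegative functions, an induction on $j$ yields
\[
\sup_{0\le t'\le t}\bigl|\ft{\Psi(\TT)}(t',\xi)\bigr|\;\les\;C^j\,t^{2j}\,\bigl(G^{\ast L}\bigr)(\xi),
\]
where $L=(k-1)j+1$ is the number of leaves and $G(\xi)\les\jb\xi^{-M}+R\,\ind_{\widetilde\O}(\xi)$, with $\widetilde\O$ a fixed slight enlargement of $\O$ still inside $\{|\xi|\sim N\}$ and $|\widetilde\O|\sim A^d$. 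Expanding $G^{\ast L}$, a summand with $m$ copies of $R\ind_{\widetilde\O}$ ($0\le m\le L$) and $L-m$ Schwartz factors is $R^m\ind_{\widetilde\O}^{\,\ast m}\ast S$ with $S$ Schwartz. For $m\ge 1$, freezing all but one $\widetilde\O$-variable (the last confined to a set of measure $\les A^d$) gives $\|\ind_{\widetilde\O}^{\,\ast m}\|_{L^\infty}\les(A^d)^{m-1}$, while Young gives $\|\ind_{\widetilde\O}^{\,\ast m}\|_{L^2}\les(A^d)^{m-\frac12}$, and the same bounds persist after convolving with $S$. Splitting frequencies at $|\xi|\sim A$ and estimating the weight by $\|\jb\xi^s\|_{L^2(|\xi|\les A)}\sim g_s(A)$ on the low part and by $\jb\xi^s\les A^s$ on the high part,
\[
\bigl\|\jb\xi^s\,R^m\ind_{\widetilde\O}^{\,\ast m}\ast S\bigr\|_{L^2_\xi}\;\les\;R^m(A^d)^{m-1}g_s(A)+R^m(A^d)^{m-\frac12}A^s\;\les\;R(RA^d)^{m-1}g_s(A),
\]
the last step using the elementary inequality $A^{s+\frac d2}\les g_s(A)$, valid in each of the three regimes of \eqref{DEF:gs}; the $m=0$ summand contributes $\les 1$. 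Summing the $\le 2^L\le C^j$ summands and using $RA^d\ge 1$ yields $\|\jb\xi^s G^{\ast L}\|_{L^2}\les C^j(RA^d)^{(k-1)j}(1+Rg_s(A))$, whence \eqref{EQ:MEst3} follows from the displayed pointwise bound and Lemma~\ref{LEM:NumberOfTrees}.

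The point requiring care is the weighted $L^2$ estimate above: to reach the sharp factor $g_s(A)$ — rather than the $A^{d/2}$ of a plain Young bound — one must split the frequencies at $|\xi|\sim A$, recognise that $\|\jb\xi^s\|_{L^2(|\xi|\les A)}\sim g_s(A)$ (the trichotomy of \eqref{DEF:gs}), and check $A^{s+d/2}\les g_s(A)$ case by case; one must also track all the combinatorial constants — the tree count $|\BT(j)|\le C_0^j$, the $2^L$ from the binomial expansion, and the $j$ powers of $t^2$ and of $C$ produced by the sub-tree induction — so that the final exponent of $RA^d$ comes out to be exactly $(k-1)j$, which is what makes these tail terms negligible against the lower bound for $\Xi_1(\vec\phi_n)$ established in the statements that follow.
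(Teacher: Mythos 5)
Your proposal is correct and follows essentially the route the paper itself indicates: since $e^{-\frac12|\xi|t}\le 1$, one discards the dissipative factor and repeats the convolution/tree argument of Lemma~3.2 in \cite{FO20} (multiplier bounds for $V_0$, $V_1$, $W$, Young's inequality with one factor in $L^2$ and the rest in $L^1$, and the frequency splitting at $|\xi|\sim A$ producing $g_s(A)$), which is exactly why the paper omits the proof. The only cosmetic caveat is that, as in the paper's own statement, uniformity of the $\Xi_0$ bound for all $t\ge 0$ should really be read for bounded $t$ (the $V_1(t)u_1$ low-frequency contribution grows like $t$), which is harmless since only $t\le T\ll 1$ is ever used.
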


We now recall some bounds on convolutions of characteristic functions of cubes:

\begin{lemma}
\label{LEM:ConvolutionIneq}
Let $a,b \in \R^d$ and $A > 0$, then we have
$$
C_d A^d \ind_{a + b + Q_A } (\xi) \leq  \ind_{a + Q_A } \ast \ind_{b + Q_A } (\xi) \leq \widetilde{C}_d A^d \ind_{a + b + Q_{2A}}(\xi )
$$

\noi where $C_d, \widetilde{C}_d > 0$ are constants depending only on the dimension $d$.
\end{lemma}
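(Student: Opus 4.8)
The plan is to interpret the convolution as the measure of an intersection of two axis-parallel cubes, and then reduce both inequalities to an elementary coordinate-wise computation. Since $Q_A = [-\frac A8, \frac A8]^d$ is symmetric about the origin, for every $\xi \in \R^d$ one can write
\begin{equation}
\notag
\ind_{a + Q_A} \ast \ind_{b + Q_A}(\xi) = \int_{\R^d} \ind_{a+Q_A}(\eta)\, \ind_{b+Q_A}(\xi - \eta)\, \mathrm{d}\eta = \bigl| (a + Q_A) \cap (\xi - b + Q_A) \bigr|,
\end{equation}
that is, the Lebesgue measure of the intersection of the cube of side length $\frac A4$ centred at $a$ with the cube of side length $\frac A4$ centred at $\xi - b$. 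First I would record the elementary fact that, in each coordinate $i$, the intervals $[a_i - \frac A8, a_i + \frac A8]$ and $[\xi_i - b_i - \frac A8, \xi_i - b_i + \frac A8]$ intersect if and only if $\abs{\xi_i - a_i - b_i} \le \frac A4$, and, when they do, the length of their overlap equals $\frac A4 - \abs{\xi_i - a_i - b_i}$.

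Granting this, the right-hand inequality follows at once: the displayed intersection is empty unless $\abs{\xi_i - a_i - b_i} \le \frac A4$ for all $i$, i.e.\ unless $\xi \in a + b + Q_{2A}$ (recall $Q_{2A} = [-\frac A4, \frac A4]^d$), and in any case the intersection is contained in $a + Q_A$, so its measure is at most $\abs{Q_A} = (\frac A4)^d$. Hence the right-hand estimate holds with $\widetilde C_d = 4^{-d}$.

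For the left-hand inequality I would take $\xi \in a + b + Q_A$, i.e.\ $\abs{\xi_i - a_i - b_i} \le \frac A8$ for every $i$. Then the overlap of the two intervals in coordinate $i$ has length $\frac A4 - \abs{\xi_i - a_i - b_i} \ge \frac A4 - \frac A8 = \frac A8$, and multiplying over $i = 1, \dots, d$ shows that the intersection has measure at least $(\frac A8)^d$. Thus the left-hand estimate holds with $C_d = 8^{-d}$.

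The whole argument is elementary and I do not expect any genuine obstacle; the only point demanding care is the bookkeeping of the $\ell^\infty$ geometry and of the explicit side lengths ($Q_A$ has side $\frac A4$ while $Q_{2A}$ has side $\frac A2$), which is precisely what makes the two displayed inclusions mutually consistent.
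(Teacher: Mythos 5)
Your proof is correct: rewriting the convolution as the measure of the intersection $(a+Q_A)\cap(\xi-b+Q_A)$ and computing the coordinate-wise overlaps gives exactly the stated bounds, with the explicit constants $C_d=8^{-d}$ and $\widetilde C_d=4^{-d}$. The paper itself omits the proof of this lemma (it is merely ``recalled'' as a standard fact), and your argument is precisely the standard elementary one it relies on, so there is nothing to add.
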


Using this lemma, we can prove the following proposition:

\begin{proposition}
\label{PROP:MultilinearEstPart2}
Let $ s < 0$, $\vec{\phi}_n$ defined as in \eqref{EQ:Defiphi_n} with $1 \le A \ll N $ and $0<t \ll N^{-1}$. Then, we have
\begin{equation}
\label{EQ:MEst4}
\| \Xi_1 (\vec{\phi}_n) (t) \|_{ H^s} \ges R^k t^2 A^{d(k - 1) }g_s(A)
\end{equation}

\noi with $g_s(A)$ defined by \eqref{DEF:gs}.
\end{proposition}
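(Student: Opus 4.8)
The plan is to compute $\widehat{\Xi_1(\vec{\phi}_n)}(t,\xi)$ explicitly, bound it from below on the small cube $Q_A$ centered at the origin by a quantity of size $t^2R^kA^{d(k-1)}$ (a high-to-low frequency transfer), and then recover the $H^s$-norm by integrating $\jb{\xi}^{2s}$ over $Q_A$, which is exactly what produces the weight $g_s(A)$. Since $\vec{\phi}_n=(\phi_n,0)$, the unique tree in $\BT(1)$ gives
\[
\Xi_1(\vec{\phi}_n)(t)=I_k\big(V(t)\vec{\phi}_n,\dots,V(t)\vec{\phi}_n\big)(t)=-\int_0^t W(t-t')\big(V(t')\vec{\phi}_n\big)^k\,dt',
\]
and $\widehat{V(t')\vec{\phi}_n}(\xi)=\big[\cos(\tfrac{\sqrt3}{2}\abs{\xi}t')+\tfrac{1}{\sqrt3}\sin(\tfrac{\sqrt3}{2}\abs{\xi}t')\big]e^{-\frac12\abs{\xi}t'}R\,\ind_\O(\xi)$. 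Every $\xi\in\O$ satisfies $\abs{\xi}\sim N$ while $t\ll N^{-1}$, so on $\O$ the bracket and the exponential are both $\geq\tfrac12$ for all $t'\in(0,t)$; hence $\widehat{V(t')\vec{\phi}_n}(\xi)\ges R\,\ind_\O(\xi)\geq0$.

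The key step is the frequency interaction. Since $A\ll N$, the union in \eqref{EQ:DefiOmega} is disjoint, so $\ind_\O=\sum_{\eta\in\Si}\ind_{\eta+Q_A}$, and the $k$-fold convolution of the nonnegative functions $\widehat{V(t')\vec{\phi}_n}$ splits into a sum of nonnegative terms; keeping only one of them yields, for any fixed $\eta_1,\dots,\eta_k\in\Si$,
\[
\widehat{\big(V(t')\vec{\phi}_n\big)^k}(\xi)=\big(\widehat{V(t')\vec{\phi}_n}\big)^{*k}(\xi)\ges R^k\,\ind_{\eta_1+Q_A}*\cdots*\ind_{\eta_k+Q_A}(\xi).
\]
I would choose the $\eta_i$ so that $\eta_1+\cdots+\eta_k=0$ — possible for every $k\geq2$: pair $Ne_1$ with $-Ne_1$ when $k$ is even, and use one triple $\{Ne_1,Ne_1,-2Ne_1\}$ together with such pairs when $k$ is odd. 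Iterating the lower bound of Lemma~\ref{LEM:ConvolutionIneq} $k-1$ times then gives $\widehat{(V(t')\vec{\phi}_n)^k}(\xi)\ges R^kA^{d(k-1)}\ind_{Q_A}(\xi)$, with an implicit constant of the form $C_d^{k-1}$.

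It remains to insert this into the Duhamel integral. For $\xi\in Q_A$ one has $\abs{\xi}(t-t')\les A/N\ll1$, so the Fourier multiplier associated with $W(t-t')$ is positive and $\ges(t-t')$ there; the integrand in $\widehat{\Xi_1(\vec{\phi}_n)}(t,\xi)$ therefore keeps a fixed sign, and
\[
\big\|\Xi_1(\vec{\phi}_n)(t)\big\|_{H^s}^2\ges\int_{Q_A}\jb{\xi}^{2s}\Big(\int_0^t(t-t')\,dt'\Big)^{2}\big(R^kA^{d(k-1)}\big)^2\,d\xi\ges\big(t^2R^kA^{d(k-1)}\big)^2\int_{Q_A}\jb{\xi}^{2s}\,d\xi.
\]
Finally, $\int_{Q_A}\jb{\xi}^{2s}\,d\xi\sim g_s(A)^2$ in each regime of \eqref{DEF:gs} — $\sim A^{d+2s}$ for $s>-\tfrac d2$, $\sim\log\jb{A}$ for $s=-\tfrac d2$, and $\sim1$ for $s<-\tfrac d2$ (using $Q_1\subseteq Q_A$ and the integrability of $\jb{\xi}^{2s}$) — and taking square roots yields \eqref{EQ:MEst4}.

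I expect the only genuine difficulty to be bookkeeping: checking that restricting the time integration to $(0,t)$ with $t\ll N^{-1}$ really does render every dispersive and dissipative factor harmless (which is where $\abs{\xi}\sim N$ on $\O$ enters), and verifying that the resonant configuration $\eta_1+\cdots+\eta_k=0$ exists for all $k\geq2$ and survives the iteration of Lemma~\ref{LEM:ConvolutionIneq} on the narrow cube $Q_A$ rather than $Q_{2A}$.
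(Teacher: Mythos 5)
Your proposal is correct and follows essentially the same route as the paper: expand the first Picard iterate explicitly, restrict to $\xi\in Q_A$ where only resonant tuples $\eta_1+\cdots+\eta_k=0$ contribute, use $t\ll N^{-1}$ to make the dissipative and dispersive factors bounded below (so the integrand has a fixed sign and $W(t-t')\gtrsim t-t'$, giving the $t^2$), apply the lower bound of Lemma~\ref{LEM:ConvolutionIneq} to produce $R^kA^{d(k-1)}\ind_{Q_A}$, and recover $g_s(A)$ from $\|\jb{\cdot}^s\ind_{Q_A}\|_{L^2}$. The only cosmetic difference is that you keep a single resonant tuple (after checking one exists for every $k\ge2$) while the paper retains the whole nonnegative sum; this changes nothing beyond a constant.
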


\begin{proof}
In order to make this proof easier to read, we denote 
\begin{equation}
\notag
\G_\xi \coloneqq \left\{ (\xi_1, \dots, \xi_k) \in \widehat{\M}^k \, \bigg| \ \sum^k_{j=1} \xi_j = \xi \right\} \quad \text{ and } \quad \mathrm{d}\xi_\G \coloneqq \mathrm{d}\xi_1 \cdots \mathrm{d}\xi_{k-1}.
\end{equation}

\noi Then, we can write using this, the fact that $\vec{\phi}_n = (\phi_n , 0)$ and equations \eqref{EQ:DefiDuhamelOp}, \eqref{EQ:DefiV}, \eqref{EQ:Defiphi_n}, \eqref{EQ:DefiOmega} and \eqref{EQ:DefiSigma}:
\begin{align*}
\F_x \big[ \Xi_1 (\vec{\phi}_n) \big] (t, \xi) & = I_k \big( V(t) \vec{\phi}_n \big) (t, \xi) \\
	& = -R^k \sum_{(\eta_1 , \dots, \eta_k) \in \Si^k} \int^t_0 e^{-\frac 12 \abs{\xi} (t-t')} V_1(t - t', \xi)
\\
&\qquad
\times
\int_{\G_\xi} \prod^k_{j = 1}
	\Big( e^{-\frac 12 \abs{\xi_j} t'} V_0 (t, \xi_j) \ind_{\eta_j + Q_A} (\xi_j) \Big) \mathrm{d}\xi_\G \mathrm{d}t'.
\end{align*}
We have
\[
\| \Xi_1 (\vec{\phi}_n) (t) \|_{H^s}
\ges
\| \jb{\cdot}^s \F_x [ \Xi_1 (\vec{\phi}_n)] (t) \ind_{Q_A} \|_{L^2_\xi}.
\]
When
$\xi \in Q_A$,
the summation in $\F_x \big[ \Xi_1 (\vec{\phi}_n) \big] (t, \xi)$
is restricted to $\eta_1+ \cdots +\eta_k = 0$.
We make a few observations:
\begin{outline}
	\1 Since we assume $A \ll N$, for any $j \in \{ 1, \dots, k\}$, $\abs{\xi_j} \sim N$.
	\1 Since we assume $0<t \ll N^{-1}$, and we have $t' \in [0,t]$ and $\abs{\xi_j} \sim N$, for any $j \in \{ 1, \dots, k\}$, $t' \abs{\xi_j} \ll 1$. This implies 
		\2 $e^{-\frac 12 \abs{\xi_j} t'} \geq \frac 12$.
		\2 $\cos\bigg(\frac{\sqrt{3}}{2}\abs{\xi_j} t'\bigg) \geq \frac 12$.
		\2 $\frac{1}{\sqrt{3}} \sin\bigg(\frac{\sqrt{3}}{2} \abs{\xi_j}t'\bigg) \sim \abs{\xi_j}t'$.
		\2 Combining the last two estimates, we get $V_0 (t', \xi_j) \ges 1$, where $V_0$ is defined in \eqref{EQ:DefiU}.
	\1 Since $(\eta_1, \dots, \eta_k) \in \Si_1$, $\eta_1+ \cdots +\eta_k = 0$, then $ \xi = \xi_1 + \cdots + \xi_k \in Q_A$. Namely, we get
$$
\abs{\xi} \les A \ll N.
$$
	\1 Since we assume $0<t \ll N^{-1}$, and we have $t' \in [0,t]$ and $\abs{\xi} \ll A$, then $(t- t')\abs{\xi} \ll 1$. This implies
		\2 $e^{-\frac 12 \abs{\xi} (t-t')} \geq \frac 12$,
		\2 $V_1(t - t', \xi) = \frac{\sin\big(\frac{\sqrt{3}}{2}\abs{\xi} (t-t')\big)}{\frac{\sqrt{3}}{2}\abs{\xi}} \sim t - t'$.
\end{outline}

\noi Therefore we get from these observations and the lefthand side inequality of Lemma \ref{LEM:ConvolutionIneq}
\begin{align*}
&| \F_x [ \Xi_1 (\vec{\phi}_n)] (t,\xi) \ind_{Q_A} (\xi)|
\\& \ges
R^k
\sum_{\substack{(\eta_1 , \dots, \eta_k) \in \Si^k \\ \eta_1+ \dots + \eta_k =0}} \int^t_0 \frac 12 (t-t') \int_{\G_\xi} \prod^k_{j=1}
\Big( \frac 12 \times \frac 12 \times \ind_{\eta_j + Q_A} (\xi_j) \Big) \mathrm{d}\xi_\G \mathrm{d}t' 
\ind_{Q_A} (\xi) \\
& \ges
R^k t^2 A^{d(k-1)} \ind_{Q_A} (\xi).
\end{align*}

\noi
Taking the $H^s$ norm, this implies
\eqref{EQ:MEst4}.
\end{proof}

\subsection{Proof of Proposition \ref{PROP:NIatGIDn}}
\label{Subsec3.2}

The goal of this subsection is to prove Proposition~\ref{PROP:NIatGIDn} by using the multilinear estimates proved before. Namely, we want to choose $R$, $A$ and $T$ such that Lemma \ref{LEM:ExistenceOfSolution} and Lemma \ref{LEM:MEstPart1} apply, and we have 
\begin{equation}
\notag
\| (u_n (0), \dt u_n (0)) - (u_0 , u_1) \|_{\H^s} < \frac 1n \quad \text{ and } \quad \| u_n (t_n ) \|_{H^s} > n.
\end{equation}

\noi Note first that \eqref{EQ:EstID} implies the following condition:

\begin{equation}
\label{EQ:cond1}
R N^s A^{\frac d2} < \frac 1n.
\end{equation}

\noi Besides, \eqref{EQ:DecOfun}, Lemma \ref{LEM:MEstPart1} and Proposition \ref{PROP:MultilinearEstPart2}, give
$$
\| u_n (T) \|_{H^s} \ges T^2 R^k A^{d(k- 1)} g_s (A) -
\Big( 1 + RN^s A^{\frac d2} \Big) - T^2 \big(RA^d\big)^{k-1} - \sum^{\infty}_{j = 2} \|\Xi_j (\vec{u}_{0,n})(T) \|_{H^s}.
$$

\noi Let us focus a bit on $\sum^{\infty}_{j = 2} \|\Xi_j (\vec{u}_{0,n})(T) \|_{H^s}$. Let us assume that
\begin{equation}
\label{EQ:cond2}
R g_s(A) \gg 1.
\end{equation}

\noi This, combined with \eqref{EQ:MEst3} and the assumption \eqref{EQ:CondOnT1}, gives
$$
\sum^{\infty}_{j = 2} \|\Xi_j (\vec{u}_{0,n})(T) \|_{H^s} \les C^2 T^4 (RA^d)^{2(k-1)} R g_s(A).
$$

\noi Therefore, we need to choose $T$, $R$ and $A$ so that $T^2 R^k A^{d(k-1)} g_s (A) > n$ and the other terms are smaller than this. Namely, we want the following:

\begin{itemize}
	\item the new assumption
\begin{equation}
\label{EQ:Cond3}
T^2 R^k A^{d(k- 1)}g_s(A) \gg n,
\end{equation}
	\item $T^2 R^k A^{d(k- 1)}g_s(A) \gg RN^s A^{\frac d2}$ which is implied by \eqref{EQ:cond1} and \eqref{EQ:Cond3},
	\item $T^2 R^k A^{d(k-1)}g_s(A) \gg T^2 \big(RA^d\big)^{k-1}$ which is equivalent to $Rg_s(A) \gg 1$ and this is already assumed in \eqref{EQ:cond2},
	\item $T^2 R^k A^{d(k- 1)}g_s(A) \gg T^4 (RA^d)^{2(k-1)} R g_s(A)$ which means $1 \gg T^2 (RA^d)^{k-1}$ and this is already assumed in \eqref{EQ:CondOnT1}.
\end{itemize}

\noi Therefore, it is sufficient to find $T$, $R$ and $A$ such that

\begin{align*}
\textup{(i)} & \quad T \ll N^{-1} \text{ and } T \ll \big(RA^d)^{-\frac{k-1}{2}}, \\
\textup{(ii)} & \quad 1 \le A \ll N, \\
\textup{(iii)} & \quad 1 \ll R g_s(A) \text{ and } 1 \ll RA^d, \\
\textup{(iv)} & \quad R N^sA^{\frac d2} < \frac 1n, \\
\textup{(v)} & \quad T^2 R^k A^{d(k-1)}g_s(A)\gg n.
\end{align*}

\noi We split the remaining of the proof in three cases.

{\bf Case 1:} Assume $s < - \frac d2$ and $k \geq 5$. Observe then that we have $s + \frac 12 < 0$. Let $0 < \dl < \min (2, - \frac 43 (\frac 12 + s))$ and set
$$
R = N^\dl \quad \text{ and } \quad A = N^{\frac 1d (1 - \frac 12 \dl)}.
$$

\noi Then, conditions (ii), (iii) and (iv) are satisfied, while the other conditions can be reduced to 
\begin{align*}
\textup{(i')} & \quad T \ll N^{-1} \text{ and } T \ll N^{- \frac{k-1}2 - \frac{k-1}4 \dl}, \\
\textup{(v')} & \quad T^2 N^{k-1 + \frac{k+1}2 \dl} \gg n.
\end{align*}

\noi Then, it suffices to choose $T$ satisfying
\[
N^{- \frac{k-1}2 - \frac{k+1}4 \dl}
\ll T \ll \min \big( N^{-1}, N^{- \frac{k-1}2 - \frac{k-1}4 \dl} \big)
\]

\noi which is possible since $\dl>0$ and $k \geq 5$.

{\bf Case 2:} Assume $s = - \frac d2$ and $k \geq 5$. Let $0 < \dl < \frac 1{16(k-1)}$ and set 
$$
R = (\log N)^\dl \quad \text{ and } \quad A = N^{\frac 1d } (\log N)^{- \frac 1{8(k-1)d}}.
$$

\noi Then, conditions (ii), (iii) and (iv) are again satisfied and, in a similar way as before, the other conditions reduce to 
\begin{align*}
\textup{(i')} & \quad T \ll N^{-1} \text{ and } T \ll N^{- \frac{k-1}2}(\log N)^{- \frac{k-1}2 \dl + \frac 1{16}}, \\
\textup{(v')} & \quad  T^2 N^{k-1}(\log N)^{k \dl + \frac 3{8}} \gg n.
\end{align*}

\noi Indeed, observe that
\begin{align*}
T^2 R^k A^{d(k-1)} g_s(A)
&\sim T^2 N^{k-1}(\log N)^{k \dl - \frac 1{8}} \big[ \log N - \frac 1{8(k-1)} \log \log N \big]^{\frac 12} \\
	& \sim T^2 N^{k-1}(\log N)^{k \dl + \frac 3{8}}.
\end{align*}

\noi Then, it suffices to choose $T$ satisfying
\[
N^{- \frac{k-1}2}(\log N)^{- \frac k2 \dl - \frac 3{16}}
\ll T \ll \min \big( N^{-1}, N^{- \frac{k-1}2}(\log N)^{- \frac{k-1}2 \dl + \frac 1{16}} \big)
\]

\noi which is possible since $k \geq 5$ and $\dl>0$.

{\bf Case 3:} Assume $- \frac d2 < s < \min(s_{\rm scal}, 0)$. Set
\[
R = N^{-s} A^{-\frac d2} (\log N)^{-1} \quad \text{ and } \quad A = N^\theta
\]

\noi with $0 < \theta < 1$.
Then,
by $-\frac d2 <s<0$,
(ii), (iii) and (iv) are satisfied. Besides, (i) and (v) become 
\begin{align*}
\textup{(i')} & \quad T \ll N^{-1} \text{ and } T \ll N^{s \frac{k-1}2 -\frac d4 (k-1) \theta} (\log N)^{\frac{k-1}2}, \\
\textup{(v')} & \quad T^2 N^{-ks + (s + \frac d2 (k-1)) \theta} (\log N)^{-k} \gg n.
\end{align*}

\noi Then, it suffices to choose $T$ satisfying
\[
N^{\frac{ks}2 - (s + \frac d2 (k-1)) \frac \theta2} (\log N)^{\frac k2}
\ll T \ll \min \big( N^{-1}, N^{s \frac{k-1}2 -\frac d4 (k-1) \theta} \big).
\]
Namely,
we need to choose $\theta$ with
\[
\frac{ks}2 - \Big( s + \frac d2 (k-1) \Big) \frac \theta2
< \min \Big(-1, s \frac{k-1}2 -\frac d4 (k-1) \theta \Big).
\]
Since $k \ge 2$ and $-\frac d2 < s<0$,
this condition is equivalent to
\[
\frac{ks+2}{s+ \frac d2(k-1)} < \theta < k.
\]
Hence, we can choose an appropriate $\theta \in (0,1)$ when
\[
-\frac d2< s< \min \Big( \frac d2 - \frac 2{k-1}, 0 \Big) = \min (s_{\rm scal}, 0).
\]

\noi Therefore,
we obtain norm inflation for $d \geq 1$ and $(s,k)$ satisfying the assumption \eqref{eq:condspart1}.

\section{A second result of norm inflation}
\label{Sec4}

In this section, we prove parts \ref{THM:mainB} and \ref{THM:mainC} of Theorem \ref{THM:main}. As in Section \ref{Sec3}, we prove in fact the following proposition, which implies our desired result.

\begin{proposition}
\label{PROP:NIwithILORn}
Let $ d \geq 1$, $ 2 \leq k \leq 5$, $\s \in \R$ and $s < - \frac 1k$. Fix $(u_0 , u_1) \in \mathcal{S} (\M^d)\times \mathcal{S}(\M^d)$. Then, for any $n \in \NB$, there exists a solution $u_n$ to \eqref{vNLW} and $t_n \in (0, \frac 1n )$ such that
\begin{equation}
\label{EQ:NIwithILORn}
\| (u_n (0), \dt u_n (0)) - (u_0, u_1) \|_{\H^s} < \frac 1n \quad \text{ and } \quad \| u_n (t_n ) \|_{H^\s} > n.
\end{equation}
\end{proposition}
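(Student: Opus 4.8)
The plan is to follow the scheme of Section~\ref{Sec3}: we exhibit the norm inflation along a one‑parameter family of solutions issuing from the perturbed data $\vec{u}_{0,n}=\vec{u}_0+\vec{\phi}_n$, with $\vec{\phi}_n=(\phi_n,0)$ and $\widehat{\phi_n}=R\ind_\O$ as in \eqref{EQ:Defiphi_n}--\eqref{EQ:DefiSigma}, parametrised by $N\gg1$, $A=A(N)$ with $1\le A\ll N$, and $R=R(N)$. The new features, relative to Section~\ref{Sec3}, are that (a) we run the solution up to a time $t_n=T(N)$ that is no longer confined to $T\ll N^{-1}$, nor to the Wiener‑algebra existence time $(RA^d)^{-\frac{k-1}2}$ of Lemma~\ref{LEM:ExistenceOfSolution}, and (b) we measure the output in $H^\s$ \emph{after localising to a fixed frequency cube $Q_1$ around the origin}, where $\jb\xi^\s\sim1$ irrespective of $\s$; this is what produces the infinite loss of regularity. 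Writing, as in \eqref{EQ:DecOfun}, $u_n=\Xi_0(\vec{u}_{0,n})+\Xi_1(\vec{\phi}_n)+\big(\Xi_1(\vec{u}_{0,n})-\Xi_1(\vec{\phi}_n)\big)+\sum_{j\ge2}\Xi_j(\vec{u}_{0,n})$, we show that $\Xi_1(\vec{\phi}_n)(t_n)$, which realises the high‑to‑low energy transfer coming from the relation $\eta_1+\cdots+\eta_k=0$ among the frequencies in $\Si$, dominates every other contribution on $Q_1$.

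The first main step is a lower bound for $\Xi_1(\vec{\phi}_n)$ valid on a longer interval than Proposition~\ref{PROP:MultilinearEstPart2}: for $N^{-1}\les t\les1$,
\[
\|\Xi_1(\vec{\phi}_n)(t)\|_{H^\s}\ges\|\jb\cdot^\s\F_x[\Xi_1(\vec{\phi}_n)](t)\ind_{Q_1}\|_{L^2_\xi}\ges R^kA^{d(k-1)}\,t\,N^{-1}.
\]
The mechanism: for $\xi\in Q_1$ the outer factor $e^{-\frac12|\xi|(t-t')}V_1(t-t',\xi)$ is still comparable to $t-t'\sim t$ (as $|\xi|(t-t')\ll1$), the inner Duhamel factors $e^{-\frac12|\xi_j|t'}$ with $|\xi_j|\sim N$ effectively restrict the $t'$‑integration to $t'\les N^{-1}$, producing the gain $N^{-1}$, and on that short range $V_0(t',\xi_j)\ges1$; the iterated convolution bound of Lemma~\ref{LEM:ConvolutionIneq}, together with the restriction to $\eta_1+\cdots+\eta_k=0$, supplies the factor $A^{d(k-1)}$ on $Q_1$. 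The extra care needed here, compared with Section~\ref{Sec3}, is that the tail $t'\ges N^{-1}$ is not sign‑definite: one must genuinely evaluate the oscillatory integrals $\int_0^t(t-t')e^{(-\frac12\sum_j|\xi_j|+i\frac{\sqrt3}2\sum_j\pm|\xi_j|)t'}\,dt'$, integrate over $\xi_j\in\eta_j+Q_A$, sum over the sign patterns and over the admissible tuples in $\Si^k$, and check that the result does not cancel. It is in controlling this combinatorics — the ``initial‑data contribution'' — that we use $2\le k\le5$; for even $k\ge6$ the same bound holds (cf.\ Remark~\ref{REM:evenk}), and since removing the restriction would not enlarge the range of $s$ we keep it.

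It remains to dominate the three remaining groups of terms, all localised to $Q_1$. The term $\Xi_0(\vec{u}_{0,n})$ reduces on $Q_1$ to $V(t)\vec{u}_0$ (the perturbation $\vec{\phi}_n$ lives at frequency $\sim N$), hence is $\les\|\vec{u}_0\|_{\H^\s}\les1$. In $\Xi_1(\vec{u}_{0,n})-\Xi_1(\vec{\phi}_n)$ every summand carries at least one $V(t)\vec{u}_0$‑factor, of frequency in a fixed compact set; for the output to reach $Q_1$ the remaining $\phi_n$‑factors must almost cancel in frequency, and the dissipation on them again yields a factor $N^{-1}$, so this group is $\les tN^{-1}(RA^d)^{k-1}+t^2$, controlled by the $\Xi_1$ bound once $RA^d\gg1$ and $R^kA^{d(k-1)}N^{-1}\gg t$ (arranged below). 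The genuinely delicate part is the tail $\sum_{j\ge2}\Xi_j(\vec{u}_{0,n})$: since $t_n$ exceeds the $\FL^1$ existence time, the algebra estimate underlying Lemma~\ref{LEM:ExistenceOfSolution} is too lossy, and instead — in the spirit of \cite{MO15,MO16} — we estimate each iteration term directly, exploiting that for the output of a tree $\TT\in\BT(j)$ to land in $Q_1$ the $(k-1)j+1$ leaf frequencies must sum to $\sim0$, while the dissipation on the high‑frequency internal Duhamel integrals contributes negative powers of $N$. This produces a bound of the form $\|\Xi_j(\vec{u}_{0,n})(t)\ind_{Q_1}\|_{H^\s}\les C^j\,\big(\kappa(t,N,R,A)\big)^{j}$ with $\kappa\ll1$, and simultaneously identifies the limit of the partial sums as a bona fide solution of \eqref{vNLW} on $[0,t_n]$ \emph{without} invoking well‑posedness in $\FL^1$. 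Requiring this series to converge and to stay below $\Xi_1$ on an interval long enough that $\Xi_1$ has already inflated is the crux of the argument and the origin of the threshold $-\tfrac1k$.

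Finally we choose the parameters. Put $R=N^\beta$, $A=N^\alpha$ with $0\le\alpha<1$ and $\beta>0$, and $t_n=N^{-\tau}$ with $0<\tau<1$ and eventually $t_n<\tfrac1n$. The requirements above become: $\beta+s+\tfrac{d\alpha}2<0$ (smallness of $\vec{\phi}_n$ in $\H^s$); $k\beta+d(k-1)\alpha-\tau-1>0$ (so that $R^kA^{d(k-1)}t_nN^{-1}\gg n$); and $(k-1)(\beta+d\alpha)<1+\tau$, together with the sharper variant of it dictated by the tail sum. A direct inspection of this linear system — for instance taking $\tau$ just above $\tfrac{k-2}k$, $d\alpha$ just below $\tfrac2k$ and $\beta$ accordingly, the case $k=2$ being degenerate in that one of the internal constraints becomes vacuous — shows it is solvable exactly when $s<-\tfrac1k$. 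For such a choice,
\[
\|u_n(t_n)\|_{H^\s}\ge\|u_n(t_n)\ind_{Q_1}\|_{H^\s}\ges R^kA^{d(k-1)}t_nN^{-1}\gg n,\qquad \|\vec{u}_{0,n}-\vec{u}_0\|_{\H^s}=\|\vec{\phi}_n\|_{\H^s}\sim RN^sA^{\frac d2}<\tfrac1n,
\]
which is Proposition~\ref{PROP:NIwithILORn}. Part~\ref{THM:mainB} of Theorem~\ref{THM:main} then follows by the density argument at the beginning of Section~\ref{Sec3}, and part~\ref{THM:mainC} by the Bourgain‑type variant of this section built on the same lower bound for $\Xi_1$. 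I expect the tail estimate of the third paragraph to be the principal obstacle: extracting from the dissipation enough smallness in the high iteration terms to make the power series converge \emph{and} remain dominated by $\Xi_1$ on a sufficiently long interval, in a regime with no recourse to $\FL^1$ well‑posedness; a secondary difficulty is the oscillatory‑integral bookkeeping in the refined lower bound for $\Xi_1$, which is precisely what forces $2\le k\le5$.
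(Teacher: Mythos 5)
Your overall strategy coincides with the paper's: perturb the data by a high-frequency packet of amplitude $R$, expand into Picard iterates, prove a refined lower bound for $\Xi_1(\vec\phi_n)$ on the longer time scale $N^{-1}\ll t\les 1$ via the low-frequency output of the zero-sum frequency interaction, bound every higher iterate directly (in the spirit of \cite{MO15, MO16}) using the dissipative gain in $N$ so as to bypass the $\FL^{0,1}$ existence time, and optimize the parameters to reach $s<-\frac1k$; this is exactly the architecture of Section~\ref{Sec4}, and your parameter count indeed reproduces the window $N^{\frac1k}\ll R\ll N^{\frac2{k-1}}$. However, as written the proposal defers precisely the two steps that constitute the proof. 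First, the lower bound for $\Xi_1$: your heuristic (restrict to $t'\les N^{-1}$, use positivity there, gain $N^{-1}$) cannot be closed by bounding the remaining range in absolute value, because the tail $t'\ges N^{-1}$ contributes at the \emph{same} order $tN^{-1}$ as the head; one must compute the time integral exactly (integration by parts after the product-to-sum expansion of $\prod_j V_0$) and then show that the resulting sum over sign patterns does not cancel. This is the content of Proposition~\ref{PROP:NewEst1} and Lemma~\ref{BoundBigI}, and it is also where the hypothesis $2\le k\le 5$ actually enters through an explicit pairing of sign patterns; you acknowledge the issue but give no argument. Moreover you keep the Section~\ref{Sec3} frequency set $\Si=\{\pm N e_1,\pm 2N e_1\}$ from \eqref{EQ:DefiSigma}, whereas the paper replaces it by the parity-adapted $\Si_k$ in \eqref{eq:phin}--\eqref{Omek} ($\{\pm Ne_1\}$ for even $k$) precisely so that all interacting frequencies have $|\xi_j|\sim N$ with a single magnitude and the compensating pattern $\overline{\eps^\ast}$ can be chosen independently of $\overline{\xi}$; with mixed magnitudes $\pm N,\pm 2N$ for even $k$ (e.g.\ tuples like $(2N,-2N,N,-N)$ when $k=4$) the pairing argument is not obviously available, which is the same obstruction described in Remark~\ref{REM:evenk} for odd $k\ge 7$. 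So your choice of data would at best require a different non-cancellation proof and might fail outright.

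Second, the tail: the bound you postulate, $\|\Xi_j(\vec u_{0,n})(t)\ind_{Q_1}\|_{H^\s}\les C^j\kappa^j$ with $\kappa\ll1$, is exactly what must be proved, uniformly in $j$ with summable constants, and it is the content of Lemmas~\ref{lem:it2t} and~\ref{lem:ujit1}: each factor of $R$ must come with a gain $(tN)^{-\frac1k}$, obtained by an induction on $j$ in which one tracks, at every node of the Duhamel tree, whether a high-frequency factor is present (so that either $\Phi(\overline\xi)\le -cN$ or $|V_1|\le N^{-1}$ applies) and keeps the combinatorial constants under control. You correctly identify this as the principal obstacle and the source of the threshold $-\frac1k$, but you do not carry it out, nor do you verify that the limit of the partial sums solves \eqref{vNLW} beyond the $\FL^{0,1}$ time, which in the paper rests on these same per-term bounds. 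In short: right approach and right parameter arithmetic, but the two key lemmas (the non-cancellation lower bound with the parity-adjusted data, and the uniform iterate bounds) are missing, and the naive positivity argument you sketch for the first of them would not suffice.
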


Note that, if \eqref{EQ:NIwithILORn} is proved for $\s \leq s$, then it follows for $\s_1 > s$ by the inequality
$$
\| u_n (t_n ) \|_{H^{\s_1}} \geq \| u_n (t_n ) \|_{H^s} > n.
$$

\noi Thus, we assume throughout the rest of this section that
\begin{equation}
\notag
\s \leq s < 0.
\end{equation}

Let us now explain the outlines of this proof. The idea is to take mostly the same construction, even though the initial data $(\vec{\phi}_n)_{n \in \NB}$ will be slightly changed depending on the nonlinearity we have (see \eqref{eq:phin}). Therefore, we keep globally the same notations. The main difference is that, contrary to the assumptions in Proposition~\ref{PROP:MultilinearEstPart2}, we assume $N^{-1} \ll T \ll 1$.
See Proposition \ref{PROP:NewEst1} for example.
This allows us to have different estimates on the terms of our power series, hence giving way to another kind of norm inflation.

\begin{remark} \rm
\label{REM:ProofCompletes}
In this remark, we come back on the proof of part~\ref{THM:mainA} of Theorem~\ref{THM:main} and explain how this section completes the argument given in Section~\ref{Sec3}. First, note that when $\s = s$, \eqref{EQ:NIwithILOR} is equivalent to \eqref{EQ:NIatGID}. Consequently, this proof also implies \eqref{EQ:NIatGID} when $2 \le k \le 5$ and $s < - \frac d2$ since we always have $-\frac d2 \leq - \frac 1k$. Besides, the proof given in Section~\ref{Sec3} does not cover the quadratic nonlinearity in one and two dimensions, or the cubic nonlinearity in the one dimension. Note however that, for these equations, we have
$$
s_{\rm scal} = \frac d2 - \frac{2}{k-1} < - \frac 1k.
$$

\noi Thus, the proof given in this section cover these cases as well and completes the proof of part~\ref{THM:mainA} of Theorem~\ref{THM:main}.
\end{remark}

\subsection{A new lower bound}

Let us first come back on the notations. We take mainly the notations introduced in Section~\ref{Sec3}. However, we make the following distinction for $\vec{\phi}_n$. Set
$$
\Si_k
:=
\begin{cases}
\{ -N e_1, N e_1 \}
& \text{if $k$ is even},
\\
\{ -2N e_1, -N e_1, N e_1, 2N e_1\}
& \text{if $k$ is odd},
\end{cases}
$$

\noi and 
\begin{equation}
\O_k := \bigcup_{\eta \in \Si_k} (\eta + Q_A).
\label{Omek}
\end{equation}

\noi Define
\begin{equation}
\label{eq:phin}
\ft{\phi}_n \coloneqq 
R \ind_{\O_k}
\end{equation}

\noi and $\vec \phi_n = (\phi_n , 0)$.
Let us now move onto a new lower bound.

\begin{proposition}
\label{PROP:NewEst1}
Let
$d \ge 1$,
$2 \le k \le 5$,
$1 \le A \ll N$ and
$N^{-1} \ll t \les 1$.
Then, for any $s \in \R$, we have
\begin{equation}
\label{EQ:NewEst1}
\| \Xi_1 (\vec{\phi}_n) (t) \|_{H^s}
\ges
R^k
\frac tN A^{(k-1)d}
\min \Big( g_s(A), g_s \big( \frac 1t \big) \Big),
\end{equation}
	where $g_s(A)$ is defined in \eqref{DEF:gs}.
\end{proposition}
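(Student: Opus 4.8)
The plan is to follow the same scheme as in the proof of Proposition \ref{PROP:MultilinearEstPart2}, namely to write out $\F_x[\Xi_1(\vec\phi_n)](t,\xi) = I_k(V(t)\vec\phi_n)(t,\xi)$ explicitly and lower-bound its $H^s$-norm by restricting the Fourier variable $\xi$ to a suitable cube and keeping only the resonant frequency interactions $\eta_1+\cdots+\eta_k=0$ with $(\eta_1,\dots,\eta_k)\in\Si_k^k$. As before, each $|\xi_j|\sim N$ on the support, and the output frequency satisfies $|\xi|\lesssim A\ll N$. The new feature is the time regime $N^{-1}\ll t\les 1$, so now $t'|\xi_j|$ can be large (of order $tN\gg1$), and the oscillatory factors $V_0(t',\xi_j)$, $e^{-\frac12|\xi_j|t'}$ in the product over $j$ no longer satisfy the crude pointwise lower bounds used before. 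The remedy is to integrate in $t'$ only over a short subinterval: since $t'\mapsto e^{-\frac12|\xi_j|t'}V_0(t',\xi_j)\prod$ is roughly $1$ for $t'\lesssim N^{-1}$, one restricts $t'\in[0,cN^{-1}]$ for a small constant $c$, on which all the $j$-factors are $\ges 1$ simultaneously, contributing a factor $\sim N^{-1}$ from $\int_0^{cN^{-1}}dt'$ (rather than the $\sim t^2$ of the short-time case). The remaining factor $e^{-\frac12|\xi|(t-t')}V_1(t-t',\xi)$ is evaluated at the output frequency $|\xi|\lesssim A$: if $|\xi|(t-t')\ll 1$ it is $\sim t-t'\sim t$, but for larger $|\xi|$ one only has $|V_1(t-t',\xi)|\sim |\xi|^{-1}\cdot(\text{oscillation})$, so one cannot simply keep all of $\xi\in Q_A$.

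The key point — and the origin of the $\min(g_s(A),g_s(1/t))$ — is the choice of the output cube over which to take the $L^2_\xi$ norm. On the one hand, restricting to $\xi\in Q_A$ and using $|V_1(t-t',\xi)|\gtrsim t$ on $|\xi|\lesssim \min(A, t^{-1})$ gives, via the convolution bound Lemma \ref{LEM:ConvolutionIneq} (left inequality: $A^d\ind_{Q_A}\les \ind_{Q_A}^{*k}$), a pointwise lower bound
\[
|\F_x[\Xi_1(\vec\phi_n)](t,\xi)|\ges R^k\,\frac tN\,A^{(k-1)d}\,\ind_{Q_{\min(A,1/t)}}(\xi),
\]
after discarding the measure-zero or oscillation-cancellation issues by working on a fixed subcube where $\sin(\frac{\sqrt3}{2}|\xi|(t-t'))$ has a definite sign, e.g. $|\xi|(t-t')\in[\frac{\pi}{6},\frac{\pi}{3}]$ when $t^{-1}\ll A$, yielding $|V_1|\sim t\wedge|\xi|^{-1}$. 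Taking the $H^s$ norm of this and computing $\|\jb\xi^s\ind_{Q_L}\|_{L^2}\sim g_s(L)$ for $L=\min(A,1/t)$ gives exactly $g_s(\min(A,1/t))=\min(g_s(A),g_s(1/t))$ since $g_s$ is monotone in the relevant range; here one uses $t\cdot\min(A,1/t)\sim \min(tA,1)$ and the definition of $g_s$ to absorb constants, noting $\frac tN A^{(k-1)d}\cdot g_s(\min(A,1/t))$ is the claimed right-hand side up to the factor $\min(tA,1)$ which is $\ges 1$... — and here is where care is needed.

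The main obstacle is bookkeeping the interplay between the $V_1(t-t',\xi)$ factor and the output cube: one must verify that over the chosen subinterval $t'\in[0,cN^{-1}]$ (so $t-t'\sim t$) and the chosen output cube $|\xi|\sim\min(A,t^{-1})$ (or $|\xi|\les A$ when $A\les t^{-1}$), the product of the $k$ input oscillatory factors and the one output factor retains a fixed sign and size $\ges$ (the stated quantities) on a set of positive proportion, so that no cancellation occurs in the $\xi_\G$-integral; the convolution lemma handles the $\ges$ on the supports. One also checks the two regimes $A\les t^{-1}$ (where $\min(g_s(A),g_s(1/t))=g_s(A)$ and one keeps all of $Q_A$ with $|V_1|\sim t$) and $A\gg t^{-1}$ (where $\min=g_s(1/t)$ and one restricts to $|\xi|\les t^{-1}$) separately. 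Modulo these verifications — which are routine oscillatory/convolution estimates in the spirit of Proposition \ref{PROP:MultilinearEstPart2} but with the $t^2$ replaced by $\frac tN$ — the bound \eqref{EQ:NewEst1} follows by taking the $L^2_\xi$ norm weighted by $\jb\xi^s$.
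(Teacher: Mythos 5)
Your reduction to the resonant tuples $\eta_1+\cdots+\eta_k=0$, the use of Lemma \ref{LEM:ConvolutionIneq}, and the final computation $\|\jb{\xi}^s\ind_{Q_{\min(A,1/t)}}\|_{L^2}\sim\min(g_s(A),g_s(1/t))$ are all consistent with the paper, but the central step of your plan has a genuine gap: you propose to lower bound the $t'$-integral by its contribution over a short window $t'\in[0,cN^{-1}]$, on which all factors have a fixed sign, and to discard the rest. The contribution of the window is $\sim c\,t/N$ (and the sign condition on each $V_0(t',\xi_j)=\tfrac2{\sqrt3}\cos(\tfrac{\sqrt3}2|\xi_j|t'-\tfrac\pi6)$, with $|\xi_j|\sim N$ or $2N$, forces $c$ to be at most an absolute constant of order one), while the complementary piece over $[cN^{-1},t]$ is only bounded by $\int_{cN^{-1}}^{t}e^{-\frac12 t'\sum_j|\xi_j|}\,t\,dt'\lesssim e^{-c'}\,t/N$ — the \emph{same} order of magnitude, with a constant that a crude triangle-inequality estimate does not make smaller than your main term. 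Since the exponential decay rate and the oscillation frequency of $\prod_j V_0(t',\xi_j)$ are both $\sim N$, there is no scale separation to exploit: whether the long-time piece reinforces or cancels the short-time piece depends on the precise phases, and ruling out cancellation is exactly the hard part. Your proposal never touches this, and indeed never uses the hypothesis $2\le k\le 5$ nor the even/odd-dependent choice of $\Si_k$ in \eqref{eq:phin} — both of which exist precisely to handle this issue.

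The paper's proof resolves it by computing the $t'$-integral exactly: it writes $\prod_j V_0(t',\xi_j)$ via a product-to-sum formula as a sum over $\overline\eps\in\{\pm1\}^k$ of cosines with phases $\Psi(\overline\xi,\overline\eps)$, evaluates each integral $I(t,\overline\xi)$ with the explicit antiderivative of $e^{at'}\cos(bt'+c)$, and isolates a boundary term $\mathrm I(\overline\xi,\overline\eps)\frac{\sin(\frac{\sqrt3}2t|\xi|)}{\frac{\sqrt3}2|\xi|}$ plus an $O(N^{-2})$ remainder. The crux is then Lemma \ref{BoundBigI}: $\sum_{\overline\eps}\mathrm I(\overline\xi,\overline\eps)\sim N^{-1}$, proved by a sign analysis in which the negative terms (which occur for $k=4,5$) are paired with positive ones, using the specific structure of $\O_k$; Remark \ref{REM:evenk} shows this pairing is delicate already for odd $k\ge7$. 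Any correct proof must carry out some version of this cancellation analysis (or otherwise quantify the oscillatory tail), so your plan as written would not close; the concluding $D_1$, $D_2$-type computation, which you do have essentially right, is not where the difficulty lies.
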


\begin{proof}
Again, we take the same notations as in the proof of Proposition~\ref{PROP:MultilinearEstPart2}. Recall that we have 
\begin{equation}
\notag
\begin{aligned}
\F_x \big[ \Xi_1 (\vec{\phi}_n) \big] (t, \xi)
&= -R^k \sum_{(\eta_1 , \dots, \eta_k) \in \Si^k_k} \int^t_0 e^{-\frac 12 \abs{\xi} (t-t')} V_1(t - t', \xi)
\\
&\qquad \times
\int_{\G_\xi} \prod^k_{j = 1}  e^{-\frac 12 \abs{\xi_j} t'} V_0 (t', \xi_j) \ind_{\eta_j + Q_A} (\xi_j) \mathrm{d}\xi_\G \mathrm{d}t'.
\end{aligned}
\end{equation}

\noi where $\Si^k_k \coloneqq \Si_k \times \cdots \times \Si_k$. Since for any function $f$, we have $\| f \|_{H^s ( \M^d)} \geq \| f \|_{H^s (Q_A) }$, we can also assume that $\xi \in Q_A$, which gives according to Lemma \ref{LEM:ConvolutionIneq}
\begin{equation}
\notag
\eta_1 + \cdots + \eta_k = 0.
\end{equation}

Moreover, observe that
$$
V_0 (t, \xi) = \cos\bigg(\frac{\sqrt{3}}{2} t \abs{\xi}\bigg) + \frac{1}{\sqrt{3}} \sin\bigg(\frac{\sqrt{3}}{2} t \abs{\xi}\bigg) = \frac{2}{\sqrt{3}} \cos\bigg(\frac{\sqrt{3}}{2} t \abs{\xi} - \frac{\pi}{6}\bigg)
$$
\noi and by a product-to-sum formula,
we get
$$
\prod^{k}_{j = 1} V_0 (t, \xi_j) = \sum_{\eps_1, \cdots, \eps_k \in \{ \pm 1\}} \frac{1}{\sqrt{3}^k} \cos\bigg(\frac{\sqrt{3}}{2} t \Psi(\overline{\xi}, \overline{\eps}) - S(\overline{\eps}) \frac{\pi}{6} \bigg)
$$

\noi where we denote 
\begin{align}
\overline{\xi} &:= (\xi_1, \dots, \xi_k), &
\overline{\eps} &:= (\eps_1, \dots, \eps_k),
\notag
\\
\Psi(\overline{\xi}, \overline{\eps}) &:= \sum^k_{j = 1} \eps_j \abs{\xi_j}, &
S(\overline{\eps}) &:= \sum^k_{j = 1} \eps_j.
\label{lowb1}
\end{align}

\noi Therefore, if we denote also
\begin{equation}
\Phi(\overline{\xi}) = \abs{\xi} - \sum^k_{j=1} \abs{\xi_j},
\label{lowb2}
\end{equation}
we get 
\begin{align*}
&\mathds{1}_{Q_A}(\xi)\F_x \big[ \Xi_1 (\vec{\phi}_n) \big] (t, \xi)
\\
&=
-  \frac{R^k}{\sqrt{3}^k} \sum_{\substack{(\eta_1, \dots, \eta_k) \in \Si^k \\ \eta_1 + \cdots + \eta_k = 0}}
\mathds{1}_{Q_A} (\xi) e^{- \frac 12 \abs{\xi}t }
\int_{\G_\xi} \sum_{\eps_1, \cdots, \eps_k \in \{ \pm 1\}} \\
	& \qquad \times \underbrace{\bigg(\int^t_0 e^{\frac{t'}{2}\Phi(\overline{\xi}) } \frac{\sin \big( \frac{\sqrt{3}}{2} (t-t') \abs{\xi} \big)}{ \frac{\sqrt{3}}{2} \abs{\xi}}
	\cos\bigg(\frac{\sqrt{3}}{2} t' \Psi(\overline{\xi}, \overline{\eps}) - S(\overline{\eps}) \frac{\pi}{6} \bigg) \mathrm{d}t'}_{\eqqcolon I (t, \overline{\xi})}
\\
	& \qquad \times 
\prod^k_{j=1} \mathds{1}_{\eta_j + Q_A} (\xi_j) \mathrm{d}\G_\xi. 
\end{align*}

Here,
a direct calculation shows the following indefinite integral:
\[
\int e^{a t'} \cos (bt' + c) dt'
= e^{a t'} \frac{a \cos( bt' + c ) + b \sin (bt' + c)}{a^2+b^2}
\]
for any $a,b, c \in \R$.
Then, integration by parts implies that
\begin{align}
I (t, \overline{\xi}) 
&=
-\frac{\frac 12 \Phi(\overline{\xi}) \cos(S(\overline{\eps})\frac{\pi}{6}) - \frac{\sqrt{3}}{2} \Psi(\overline{\xi}, \overline{\eps})\sin(S(\overline{\eps})\frac{\pi}{6}) }
{\frac 14 \Phi(\overline{\xi})^2 + \frac{3}{4} \Psi(\overline{\xi}, \overline{\eps})^2}
\frac{\sin \big( \frac{\sqrt{3}}{2} t \abs{\xi} \big)}{ \frac{\sqrt{3}}{2} \abs{\xi}}
\notag
\\
&\quad
+
\int_0^t
e^{\frac{t'}{2} \Phi(\overline{\xi})}
\frac{\frac 12 \Phi(\overline{\xi}) \cos ( \frac{\sqrt{3}}{2} t' \Psi(\overline{\xi}, \overline{\eps}) -  S(\overline{\eps})\frac{\pi}{6} )+ \frac{\sqrt{3}}{2} \Psi(\overline{\xi}, \overline{\eps})  \sin (  \frac{\sqrt{3}}{2} t' \Psi(\overline{\xi} \overline{\eps}) -  S(\overline{\eps})\frac{\pi}{6} )}
{\frac 14 \Phi(\overline{\xi})^2 + \frac{3}{4} \Psi(\overline{\xi}, \overline{\eps})^2}
\notag
\\&\qquad
\times
\cos \bigg( \frac{\sqrt{3}}{2} (t-t') \abs{\xi} \bigg)
dt'
\notag
\\
&=: \mathrm I (\cj \xi , \cj \eps)
\frac{\sin \big( \frac{\sqrt{3}}{2} t \abs{\xi} \big)}{ \frac{\sqrt{3}}{2} \abs{\xi}}  + \mathrm{II}(t,\cj \xi, \cj \eps).
\label{lowb3}
\end{align}

\noi
Since we assumed $\xi \in Q_A = [-\frac A8, \frac A8]^d$, and we have $1 \leq A \ll N \sim \abs{\xi_j }$ for any $j = 1, \dots , k$, we get $\Phi(\overline{\xi})  \sim - N$.
Then,
we have
\[
| \mathrm{II} (t,\cj \xi, \cj \eps )|
\les
\int_0^t
\frac{e^{\frac {t'}2 \Phi(\overline{\xi})}}{N}
dt'
\les
\frac 1{N^2}.
\]

\noi Now, Lemma \ref{LEM:ConvolutionIneq} gives
$$
\begin{aligned}
&\bigg|\mathds{1}_{Q_A}(\xi)\F_x \big[ \Xi_1 (\vec{\phi}_n) \big] (t, \xi)
\\
&\
+
\frac{R^k}{\sqrt{3}^k}
\mathds{1}_{Q_A} (\xi) e^{- \frac 12 \abs{\xi}t }
\frac{\sin \big( \frac{\sqrt{3}}{2} t \abs{\xi} \big)}{ \frac{\sqrt{3}}{2} \abs{\xi}}
\sum_{\substack{(\eta_1, \dots, \eta_k) \in \Si_k^k \\ \eta_1 + \cdots + \eta_k = 0}}
\int_{\G_\xi}
\sum_{\eps_1, \cdots, \eps_k \in \{ \pm 1\}}
\mathrm I (\cj \xi , \cj \eps)
\prod^k_{j=1} \mathds{1}_{\eta_j + Q_A} (\xi_j) \mathrm{d}\xi_\G
\bigg|
\\
&\les
R^k
\mathds{1}_{Q_A} (\xi)
e^{- \frac 12 \abs{\xi}t }
\sum_{\substack{(\eta_1, \dots, \eta_k) \in \Si_k^k \\ \eta_1 + \cdots + \eta_k = 0}}
\int_{\G_\xi} \prod^k_{j=1} \mathds{1}_{\eta_j + Q_A} (\xi_j) \mathrm{d}\xi_\G
\frac 1{N^2}
\\
&\les
R^k
A^{d(k-1)}
\mathds{1}_{Q_A} (\xi)
e^{- \frac 12 \abs{\xi}t }
\frac 1{N^2}
.
\end{aligned}
$$

\noi Besides, we have from Lemma \ref{BoundBigI} below that 
$$
\sum_{\eps_1, \cdots, \eps_k \in \{ \pm 1\}} \mathrm I (\cj \xi , \cj \eps)
\sim \frac 1N,
$$
where the implicit constant is independent of the choice of $\cj \xi$.
Hence,
with Lemma \ref{LEM:ConvolutionIneq},
we obtain that
\begin{align*}
&\| \Xi_1 (\vec{\phi}_n) (t) \|_{H^s}
\\
&\ge
\| \jb{\xi}^s \mathds{1}_{Q_A}(\xi)\F_x \big[ \Xi_1 (\vec{\phi}_n) \big] (t, \xi) \|_{L^2_\xi}
\\
&\ges
R^k
A^{d(k-1)}
\bigg(
\frac 1N
\Big\|
\jb{\xi}^s
\mathds{1}_{Q_A} (\xi) e^{- \frac 12 \abs{\xi}t }
\frac{\sin \big( \frac{\sqrt{3}}{2} t \abs{\xi} \big)}{ \frac{\sqrt{3}}{2} \abs{\xi}}
\Big\|_{L^2_\xi}
-
\frac 1{N^2}
\Big\| \jb{\xi}^s 
\mathds{1}_{Q_A} (\xi)
e^{- \frac 12 \abs{\xi}t } \Big\|_{L^2_\xi}
\bigg)
\\
&=:
R^k
A^{d(k-1)}
\Big(
\frac 1N
D_1 (t,A) - \frac 1{N^2} D_2(t,A)
\Big).
\end{align*}

Here,
when $0< t \ll A^{-1}$,
by \eqref{DEF:gs},
we have
\begin{align*}
D_1(t,A)
&\sim
t
\big\|
\jb{\xi}^s
\mathds{1}_{Q_A}
\big\|_{L^2_\xi}
\sim t g_s(A),
\\
D_2(t,A)
&\les
\| \jb{\xi}^s \mathds{1}_{Q_A} \|_{L^2_\xi}
\sim g_s(A).
\end{align*}
When $A^{-1} \les t \les 1$,
a direct calculation shows that
\begin{align*}
D_1(t,A)^2
&\sim
t^2 \int_0^1 dr
+ t^2 \int_1^{\frac 1{t}} r^{2s+d-1} dr
+ \int_{\frac 1t}^{\frac A8} r^{2s+d-3} e^{-r t} \sin^2 \Big( \frac{\sqrt 3}2 t r \Big) dr
\\
&\sim
\begin{cases}
t^2 & \text{if } s<-\frac d2, \\
t^2 \log \jb{ \frac 1t} & \text{if } s=-\frac d2, \\
t^{-2s-d+2} & \text{if } s>-\frac d2
\end{cases}
\\
&= t^2 g_s \big (\frac 1t \big)^2.
\end{align*}
Similarly,
we have
\begin{equation}
\begin{aligned}
D_2(t,A)^2
\le
\big\| \jb{\xi}^s  e^{- \frac 12 \abs{\xi}t } \big\|_{L^2}^2
&\sim
\int_0^1 dr
+ \int_1^{\frac 1{t}} r^{2s+d-1} dr
+ \int_{\frac 1t}^{\infty} r^{2s+d-1} e^{-r t} dr
\\
&\sim
\begin{cases}
1 & \text{if } s<-\frac d2, \\
\log \jb{\frac 1t} & \text{if } s=-\frac d2, \\
t^{-2s-d} & \text{if } s>-\frac d2
\end{cases}
\\
&= g_s \big (\frac 1t \big)^2.
\end{aligned}
\notag
\end{equation}
From $t \gg N^{-1}$,
we obtain the desired bound \eqref{EQ:NewEst1}.
\end{proof}

\begin{lemma}
\label{BoundBigI}
Taking the same notations as before, we have
$$
\sum_{\eps_1, \cdots, \eps_k \in \{ \pm 1\}} \mathrm I (\cj \xi , \cj \eps)
\sim \frac 1N,
$$
where the implicit constant is independent of the choice of $\cj \xi$.
\end{lemma}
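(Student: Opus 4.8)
The plan is to build on the explicit formula for $\mathrm I(\cj\xi,\cj\eps)$ found in \eqref{lowb3},
\[
\mathrm I(\cj\xi,\cj\eps) = -\frac{\frac12\Phi(\cj\xi)\cos\big(S(\cj\eps)\tfrac{\pi}{6}\big) - \frac{\sqrt3}{2}\Psi(\cj\xi,\cj\eps)\sin\big(S(\cj\eps)\tfrac{\pi}{6}\big)}{\frac14\Phi(\cj\xi)^2 + \frac34\Psi(\cj\xi,\cj\eps)^2},
\]
and to rewrite it in complex form: using $-\frac12\pm i\frac{\sqrt3}{2} = e^{\pm2\pi i/3}$ together with $\Phi(\cj\xi) = |\xi| - \sum_j|\xi_j|$ and $\Psi(\cj\xi,\cj\eps) = \sum_j\eps_j|\xi_j|$ from \eqref{lowb1}--\eqref{lowb2}, one verifies that the denominator above equals $|w(\cj\xi,\cj\eps)|^2$ and
\[
\mathrm I(\cj\xi,\cj\eps) = -\Re\frac{e^{-iS(\cj\eps)\pi/6}}{w(\cj\xi,\cj\eps)}, \qquad w(\cj\xi,\cj\eps) := \tfrac12|\xi| + \sum_{j=1}^{k}|\xi_j|\,e^{2\pi i\eps_j/3}.
\]
Since $\xi\in Q_A$ and $1\le A\ll N\sim|\xi_j|$, one has $\Re w(\cj\xi,\cj\eps) = \frac12\Phi(\cj\xi)\sim -N$ (as already noted in the proof of Proposition~\ref{PROP:NewEst1}), hence $|w(\cj\xi,\cj\eps)|\sim N$. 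As the numerator above is $O(N)$ and the sum over $\cj\eps$ has only $2^k$ terms, this already gives the upper bound $\big|\sum_{\cj\eps}\mathrm I(\cj\xi,\cj\eps)\big|\les\frac1N$.

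For the lower bound the idea is to freeze the leading order in $N$. Recall $\eta_j\in\Si_k$, so $\eta_j = \varsigma_j m_j Ne_1$ with $\varsigma_j\in\{\pm1\}$ and $m_j\in\{1,2\}$ (and $m_j = 1$ if $k$ is even). Then $\big||\xi_j| - m_jN\big| = \big||\xi_j| - |\eta_j|\big|\le|\xi_j-\eta_j|\les A$ and $|\xi|\les A$, so
\[
w(\cj\xi,\cj\eps) = N\,z(\cj\eps) + O(A), \qquad z(\cj\eps) := \sum_{j=1}^{k} m_j\,e^{2\pi i\eps_j/3}.
\]
Because $\Re z(\cj\eps) = -\frac12\sum_j m_j\le -1$ and $|z(\cj\eps)|\le\sum_j m_j\le 2k$, we get $|z(\cj\eps)|\sim 1$; expanding $\frac1{w} = \frac1{N z(\cj\eps)}(1 + O(A/N))$ then gives, uniformly in $\cj\xi$,
\[
\mathrm I(\cj\xi,\cj\eps) = \frac1N\,c(\cj\eps) + O\!\Big(\frac{A}{N^2}\Big), \qquad c(\cj\eps) := -\Re\frac{e^{-iS(\cj\eps)\pi/6}}{z(\cj\eps)}.
\]
Since $A\ll N$, after summing over $\cj\eps$ it remains to show that $\sum_{\cj\eps}c(\cj\eps)$ is bounded below by a positive constant (uniform in $\cj\xi$); together with the upper bound this yields $\sum_{\cj\eps}\mathrm I(\cj\xi,\cj\eps)\sim\frac1N$.

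The crux, which I expect to be the main obstacle, is to rule out cancellation by showing that \emph{every} $c(\cj\eps)$ is nonnegative. Using $\cos\theta\pm\sqrt3\sin\theta = 2\cos(\theta\mp\tfrac\pi3)$ one computes
\[
c(\cj\eps) = \frac1{|z(\cj\eps)|^2}\sum_{j=1}^{k}m_j\cos\!\Big(\tfrac\pi6\big(S(\cj\eps)-2\eps_j\big)\Big) = \frac{M_+\cos\!\big(\tfrac\pi6(S(\cj\eps)-2)\big) + M_-\cos\!\big(\tfrac\pi6(S(\cj\eps)+2)\big)}{|z(\cj\eps)|^2},
\]
where $M_\pm := \sum_{j:\,\eps_j=\pm1}m_j$. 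Writing $p = \#\{j:\eps_j=1\}$, $q = \#\{j:\eps_j=-1\}$, so $S(\cj\eps) = p-q$ and $p+q=k\le5$, and using the elementary bounds $p\le M_+\le 2p$, $q\le M_-\le 2q$, one checks case by case on the value of $S(\cj\eps)\in\{0,\pm1,\dots,\pm k\}$ that $c(\cj\eps)\ge0$: for instance $c(\cj\eps) = \frac{M_+ + M_-}{2|z(\cj\eps)|^2}$ when $S(\cj\eps)=0$, $c(\cj\eps) = \frac{\sqrt3\,M_+}{2|z(\cj\eps)|^2}$ when $S(\cj\eps)=1$, and $c(\cj\eps) = \frac{\sqrt3\,(M_+-M_-)}{2|z(\cj\eps)|^2}$ when $S(\cj\eps)=3$ — nonnegative because $S(\cj\eps)=3$ forces $q\le1$, hence $M_+\ge p\ge3$ and $M_-\le2q\le2$. (It vanishes only in the extreme configurations $p=0$ or $q=0$, and only when $k=5$.) Finally, among the $\cj\eps$ with $S(\cj\eps)=0$ if $k$ is even, or $S(\cj\eps)=\pm1$ if $k$ is odd — and such $\cj\eps$ always exist — one has $c(\cj\eps)\ges1$ (using $|z(\cj\eps)|\sim1$ and $M_\pm\ge1$), so $\sum_{\cj\eps}c(\cj\eps)\ges1$ and the lemma follows. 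The reason no cancellation occurs is structural: each $z(\cj\eps)$ lies in the open left half-plane, and because $|S(\cj\eps)|\le k\le5$ the rotation by $e^{-iS(\cj\eps)\pi/6}$ is mild enough that each $e^{-iS(\cj\eps)\pi/6}/z(\cj\eps)$ stays in the closed left half-plane — which is exactly the content of the case analysis above.
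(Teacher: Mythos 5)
Your proof is correct, and it takes a genuinely different route from the paper. The paper splits $\mathrm I(\cj\xi,\cj\eps)$ into the pieces $\mathrm I_j$ attached to each frequency $|\xi_j|$ (plus the small $\mathrm I_0$ term in $|\xi|$), fixes $j$, and proves $\sum_{\cj\eps}\mathrm I_j\ges \frac 1N$ by pairing every sign pattern with $\sin((1+\eps_jS(\cj\eps))\frac\pi6)<0$ with a partner $\cj{\eps^\ast}$ carrying the same sine magnitude but smaller $|\Psi|$, so that the positive partner dominates through the monotonicity of $(\Phi^2+3\Psi^2)^{-1}$; this is done case by case in $k=2,3,4,5$ with explicit pairings, pointwise in $\cj\xi$ and with no expansion in $A/N$. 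You instead keep $\mathrm I(\cj\xi,\cj\eps)$ whole, recast it as $-\Re\big(e^{-iS(\cj\eps)\pi/6}/w\big)$ with $w=\frac12\Phi+i\frac{\sqrt3}2\Psi$ (this identity, and your formula $c(\cj\eps)=|z|^{-2}\big(M_+\cos(\frac\pi6(S-2))+M_-\cos(\frac\pi6(S+2))\big)$, both check out), freeze the frequencies at $\pm m_jN$ up to $O(A)$ errors, and show that every leading coefficient $c(\cj\eps)$ is nonnegative with at least one $\ges 1$; summing over the $2^k$ patterns and absorbing the $O(A/N^2)$ error using $A\ll N$ gives the lower bound, and the crude bound $|w|\sim N$ gives the upper bound, all uniformly in $\cj\xi$ since only finitely many $(m_1,\dots,m_k)$ occur. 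In effect you transpose the paper's double sum (positivity per sign pattern $\cj\eps$, summed over $j$, rather than per index $j$, summed over $\cj\eps$) after linearizing in $A/N$; what this buys is a transparent positivity structure (each $e^{-iS\pi/6}/z(\cj\eps)$ lies in the closed left half-plane for $k\le5$), at the price of the $O(A/N)$ bookkeeping that the paper's pointwise pairing avoids. Two small remarks: your half-plane sentence at the end is only a gloss --- the rotation bound $|S|\le k$ alone would not suffice, and the real content is the case check on $S(\cj\eps)$; and you display only the cases $S=0,1,3$, but the remaining ones ($S=\pm2,\pm4,\pm5$, the others following by the symmetry $S\mapsto -S$, $M_+\leftrightarrow M_-$) do verify (with $c=0$ exactly for the all-equal sign patterns when $k=5$), and the check genuinely uses $k\le5$ (it fails at $k=7$, e.g.\ $S=5$, $q=1$), consistent with Remark \ref{REM:evenk}. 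So the omitted cases are routine, not a gap.
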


\begin{proof}
It follows from \eqref{lowb3}, \eqref{lowb1} and \eqref{lowb2} that
\begin{align*}
\mathrm I (\cj \xi , \cj \eps)
&= -
\frac{ 2 \Phi(\overline{\xi}) \cos(S(\overline{\eps})\frac{\pi}{6}) - 2\sqrt{3} \Psi(\overline{\xi}, \overline{\eps})\sin(S(\overline{\eps})\frac{\pi}{6}) }{ \Phi(\overline{\xi})^2 + 3 \Psi(\overline{\xi}, \overline{\eps})^2}
\\
&=
-2
\frac{ \cos(S(\overline{\eps})\frac{\pi}{6})}{ \Phi(\overline{\xi})^2 + 3 \Psi(\overline{\xi}, \overline{\eps})^2}
|\xi|
+
4 \sum_{j=1}^k
\frac{ \sin ((1 + \eps_j S(\overline{\eps})) \frac{\pi}{6})}
{ \Phi(\overline{\xi})^2 + 3 \Psi(\overline{\xi}, \overline{\eps})^2} |\xi_j|
\\
&=:
-2\mathrm I_0 (\cj \xi , \cj \eps)
+
4
\sum_{j=1}^k
\mathrm I_j (\cj \xi , \cj \eps).
\end{align*}
Since $ \Phi(\overline{\xi})^2 + 3 \Psi(\overline{\xi}, \overline{\eps})^2 \sim N^2$ and $\abs{\xi} \les A \ll N \sim \abs{\xi_j}$ for any $j = 1, \dots , k$,
we have
\[
|\mathrm I_0 (\cj \xi , \cj \eps)|
\les \frac A{N^2}.
\]

Once we have
\begin{equation}
\sum_{\overline{\eps} \in \{ \pm 1\}^k}
\mathrm I_j (\cj \xi , \cj \eps)
\ges \frac 1N
\label{bounde1}
\end{equation}
for $j=1, \dots, k$,
we obtain
\[
\frac 1N
\sim
\frac 1N - \frac A{N^2}
\les
\sum_{\overline{\eps} \in \{ \pm 1\}^k}
\mathrm I (\cj \xi , \cj \eps)
\les \frac 1N + \frac A{N^2} \sim \frac 1N.
\]
Note that
the upper bound
$\sum_{\overline{\eps} \in \{ \pm 1\}^k}
\mathrm I_j (\cj \xi , \cj \eps)
\les \frac 1N$
easily follows from $\xi_1, \dots, \xi_k \in \O_k$.

In what follows,
we consider \eqref{bounde1}.
For simplicity,
we only focus on the case $j=1$,
since the remaining cases are similarly handled.
By \eqref{lowb1},
we have
\[
1 + \eps_1 S(\overline{\eps})
= 2 + \eps_1 \sum_{j=2}^k \eps_j.
\]
Since $0 \le 1 + \eps_1 S(\overline{\eps}) \le 4$ for $k=2,3$ and $\overline{\eps} \in \{ \pm 1\}^k$,
we obtain that
$\sin ((1 + \eps_1 S(\overline{\eps})) \frac{\pi}{6}) \ge 0$
and
\[
\sum_{\overline{\eps} \in \{ \pm 1\}^k}
\mathrm I_1 (\cj \xi , \cj \eps)
\ge \mathrm I_1 (\cj \xi , \{ +1 \}^k)
\sim
\frac 1N.
\]
for $k=2,3$.

For $k=4$,
we have
$\sin ((1 + \eps_1 S(\overline{\eps})) \frac{\pi}{6})<0$
only when
$\overline{\eps}
= \pm (+1,-1,-1,-1)$.
For $\overline{\eps} = \pm (+1,-1,-1,-1)$,
we set
\[
\overline{\eps^\ast}
=
\pm
(+1,+1,-1,-1).
\]
It follows from
\eqref{lowb1},
$\xi_1, \dots, \xi_4 \in \O_4$ and \eqref{Omek}
that
\[
|\Psi(\overline{\xi}, \overline{\eps})|
- 
|\Psi(\overline{\xi}, \overline{\eps^\ast})|
\ge 2 \min \big( |\xi_2|, -|\xi_1|+|\xi_3|+|\xi_4| \big)
\sim N.
\]
We thus obtain that
\begin{align*}
\mathrm I_1 (\cj \xi , \cj{\eps^\ast}) + \mathrm I_1 (\cj \xi , \cj \eps)
&=
\frac{1}2
|\xi_1|
\Big(
\frac{1}
{ \Phi(\overline{\xi})^2 + 3 \Psi(\overline{\xi}, \overline{\eps^\ast})^2}
-\frac{1}
{ \Phi(\overline{\xi})^2 + 3 \Psi(\overline{\xi}, \overline{\eps})^2}
\Big)
\\
&=
\frac{1}2
|\xi_1|
\frac{3 (|\Psi(\overline{\xi}, \overline{\eps})| + |\Psi(\overline{\xi}, \overline{\eps^\ast})|)(|\Psi(\overline{\xi}, \overline{\eps})| - |\Psi(\overline{\xi}, \overline{\eps^\ast})|)}
{ (\Phi(\overline{\xi})^2 + 3 \Psi(\overline{\xi}, \overline{\eps})^2) (\Phi(\overline{\xi})^2 + 3 \Psi(\overline{\xi}, \overline{\eps})^2)}
\sim
\frac 1N,
\end{align*}
which shows \eqref{bounde1} for $j=1$ and $k=4$.

For $k=5$,
we have
$\sin ((1 + \eps_j S(\overline{\eps})) \frac{\pi}{6})<0$
only when
$\overline{\eps} = \pm (+1, -1,-1,-1,-1)$.
For $\overline{\eps} = \pm (+1, -1,-1,-1,-1)$,
we set
\[
\overline{\eps^\ast}
=
\pm
(+1, +1, +1,-1,-1).
\]
It follows from
\eqref{lowb1},
$\xi_1, \dots, \xi_5 \in \O_5$ and \eqref{Omek}
that
\[
|\Psi(\overline{\xi}, \overline{\eps})|
- 
|\Psi(\overline{\xi}, \overline{\eps^\ast})|
\ge 2 \min \big( |\xi_2|+|\xi_3|, -|\xi_1|+|\xi_4|+|\xi_5| \big)
\ge 0.
\]
We have
\begin{align*}
\mathrm I_1 (\cj \xi , \cj \eps)
+
\mathrm I_1 (\cj \xi , \cj{\eps^\ast})
&=
\frac{\sqrt 3}2
|\xi_1|
\Big(
\frac{1}
{ \Phi(\overline{\xi})^2 + 3 \Psi(\overline{\xi}, \overline{\eps})^2}
-
\frac{1}
{ \Phi(\overline{\xi})^2 + 3 \Psi(\overline{\xi}, \overline{\eps^\ast})^2}
\Big)
\\
&=
\frac{\sqrt 3}2
|\xi_1|
\frac{3 (|\Psi(\overline{\xi}, \overline{\eps^\ast})| + |\Psi(\overline{\xi}, \overline{\eps})|) (|\Psi(\overline{\xi}, \overline{\eps^\ast})| - |\Psi(\overline{\xi}, \overline{\eps})|)}
{ (\Phi(\overline{\xi})^2 + 3 \Psi(\overline{\xi}, \overline{\eps^\ast})^2) (\Phi(\overline{\xi})^2 + 3 \Psi(\overline{\xi}, \overline{\eps})^2)}
\ge 0.
\end{align*}
We thus obtain
\[
\sum_{\overline{\eps} \in \{ \pm 1\}^k}
\mathrm I_1 (\cj \xi , \cj \eps)
\ge \mathrm I_1 (\cj \xi , (+1,+1,-1,+1,-1))
\sim
\frac 1N,
\]
which shows \eqref{bounde1} for $j=1$ and $k=5$.
\end{proof}

\begin{remark}
\label{REM:evenk}
\rm
For even $k$,
we can show the same bound.
Indeed,
if
$\sin ((1 + \eps_1 S(\overline{\eps})) \frac{\pi}{6}) < 0$,
there exists $\overline{\eps^\ast} \in \{ \pm 1 \}^k$ such that
\begin{equation}
-\sin \Big((1 + \eps_1 S(\overline{\eps})) \frac{\pi}{6}\Big)
= \sin \Big((1 + \eps_1^\ast S(\overline{\eps^\ast})) \frac{\pi}{6}\Big),
\quad
|S(\overline{\eps})| > |S(\overline{\eps^\ast})|.
\label{condev}
\end{equation}
Since $k$ is even,
it follows from
\eqref{lowb1},
$\xi_j \in \O_k$ and \eqref{Omek}
that
\[
|\Psi(\overline{\xi}, \overline{\eps})|
\ge |S(\overline{\eps})| N - kA,
\quad
|\Psi(\overline{\xi}, \overline{\eps^\ast})|
\le |S(\overline{\eps^\ast})| N + kA.
\]
We then obtain that
\begin{align*}
&\mathrm I_1 (\cj \xi , \cj \eps)
+ \mathrm I_1 (\cj \xi , \cj{\eps^\ast})
\\
&\ge
\Big| \sin \Big( (1 + \eps_1 S(\overline{\eps})) \frac{\pi}{6} \Big) \Big|
|\xi_1|
\Big(
\frac{1}
{ \Phi(\overline{\xi})^2 + 3 (|S(\overline{\eps^\ast})| N + kA)^2}
-
\frac{1}
{ \Phi(\overline{\xi})^2 + 3 (|S(\overline{\eps})| N - kA)^2}
\Big)
\\
&=
\Big| \sin \Big( (1 + \eps_1 S(\overline{\eps})) \frac{\pi}{6} \Big) \Big| |\xi_1|
\frac{3 (|S(\overline{\eps})| +|S(\overline{\eps^\ast})|) N
\{ (|S(\overline{\eps})|-|S(\overline{\eps^\ast})|)N -2kA \}}
{ (\Phi(\overline{\xi})^2 + 3 (|S(\overline{\eps^\ast})| N + kA)^2)
( \Phi(\overline{\xi})^2 + 3 (|S(\overline{\eps})| N + kA)^2)}
\\
&\sim \frac 1N.
\end{align*}

For odd $k$,
if we can choose $\overline{\eps^\ast} \in \{ \pm 1 \}^k$ satisfying
\eqref{condev} and
$|\Psi(\overline{\xi},\overline{\eps})| \ge |\Psi(\overline{\xi},\overline{\eps^\ast})|$,
the same argument works.
More precisely,
the condition \eqref{condev} is not sufficient for odd $k \ge 7$.
Indeed,
when
$k=7$ and $\overline{\eps} = (+1,+1,-1,-1,-1,-1,-1)$,
we have to choose
\[
\overline{\eps^\ast}
=
\begin{cases}
(+1,+1,-1,+1,+1,-1,-1)
&
\text{if }
(\xi_1,\dots, \xi_{7})
= (2N,-2N,2N,N,-N,-N,-N),\\
(+1,+1,+1,-1,+1,-1,-1)
&
\text{if }
(\xi_1,\dots, \xi_{7})
= (2N,-2N,N,2N,-N,-N,-N).
\end{cases}
\]
However, if the choice of $\overline{\eps^\ast}$ is reversed, the inequality
$|\Psi(\overline{\xi},\overline{\eps})|
\ge |\Psi(\overline{\xi},\overline{\eps^\ast})|$
does not hold,
although the condition \eqref{condev} is satisfied.
Namely,
the choice of $\overline{\eps^\ast}$
depends also on $\xi_j \in \O_k$
for odd $k \ge 7$. Therefore, we are not sure if we can generalize this method for any polynomial nonlinearity.

Nonetheless, we do not think that this issue must be explored further. Indeed, considering the results in the following subsections, a generalization of this section could still only give norm inflation with infinite loss of regularity in $H^s (\M^d)$ for $s < - \frac 1k$, with $k$ the power of the nonlinearity. In Section~\ref{Sec3} though, we already proved norm inflation in every negative Sobolev spaces whenever $k \geq 5$. Subsequently, the only advantage of such a result would only be to have some infinite loss of regularity in a certain range, which do not have any consequences on the continuity of the solution map. We decided then to leave the computations as they are.
\end{remark}

By using Proposition~\ref{PROP:NewEst1}, and following the idea of \cite{Bou97}, we can prove part~\ref{THM:mainC} of Theorem~\ref{THM:main}, that is the failure of $C^k$-continuity of our solution map in $H^s (\M^d)$ when $d \in \NB$, $2 \le k \le 5$ and $s <  s_{\rm vis}$,
where $s_{\rm vis}$ is defined in \eqref{defsWP}.

\begin{proof}[Proof of \eqref{THM:mainC} in Theorem \ref{THM:main}]

Setting $\vec u_{0,N} = \vec \phi_N$, with $\vec \phi_N$ defined by \eqref{eq:phin} and $R \sim N^{-s} A^{-\frac d2}$,
we have
$\| \vec u_{0,n} \|_{\H^s} \sim 1$. However, when $A^{-1} \les t \les 1$,
Proposition \ref{PROP:NewEst1} yields
\[
\| \Xi_1 (\vec{\phi}_n) (t) \|_{H^s}
\ges
R^k
\frac tN A^{(k-1)d} g_s \big( \frac 1t \big)
\sim
N^{-ks-1} A^{(\frac k2-1)d}
t g_s \big( \frac 1t \big).
\]

\noi Setting
\[
A = N (\log N)^{-1}
\quad \text{ and } \quad 
t = (\log N)^{-1},
\]

\noi we obtain 
\[
\| \Xi_1 (\vec{\phi}_n) (t) \|_{H^s}
\ges
N^{-ks-1+(\frac k2-1)d}
(\log N)^{-(\frac k2-1)d-1}
g_s (\log N).
\]
If
\[
-ks-1+ \Big( \frac k2-1 \Big) d>0
\iff
s< d \Big( \frac 12 - \frac 1k \Big) -\frac 1k
= s_{\rm vis}
,
\]
we have
\[
\lim_{N \to \infty}
\| \Xi_1 (\vec{\phi}_n) (t) \|_{H^s}
=\infty,
\]
which concludes the proof.
\end{proof}

\subsection{New upper bounds}
\label{subsecUB2}

In the following,
we estimate the upper bond of the iteration terms, but first we introduce some new notations and comments. We denote, for $\vec u_0 = (u_0 , u_1)$,
\begin{equation}
\label{eq:absu0}
|\ft{\vec u_0}| := | \ft{u}_0 + \ft{u}_1|.
\end{equation}

\noi Besides, we denote for any function $f$ and $n \in \NB$
\begin{equation}
\label{eq:convIt}
f^{[n]} \coloneqq \underbrace{f \ast \cdots \ast f}_{n \text{ times}}.
\end{equation}
For simplicity,
we denote the Dirac delta measure at the origin as $f^{[0]}$.

As in Section~\ref{Sec3}, since $\vec u_0$ is fixed and smooth, we assume
$$
\| \vec u_0 \|_{\H^0}\sim \| \vec u_0 \|_{\FLv^1} \sim 1.
$$

\noi Thus, for any $s < 0$ and by using Young's inequality and the algebra property of $\FL^1 (\M^d)$, we get for any $m, r \in \NB_0$, $(m,r) \neq (0,0)$ and $2 \leq k \leq 5$:
\begin{equation}
\label{estconvIDs}
\| \ind_{\O_k}^{[m]}\ast \abs{\ft{\vec u_0}}^{[r]} \|_{H^s} \leq \| \ind_{\O_k}^{[m]}\ast \abs{\ft{\vec u_0}}^{[r]} \|_{L^2} \les A^{md}.
\end{equation} 

\noi The idea in the following proofs will be to rewrite the Picard iterates so that we can use \eqref{estconvIDs} to complete the estimates.

Let us first consider the first three Picard iterates.

\begin{lemma}
\label{lem:it2t}
For any $s < 0$, $k \geq 2$, $1 \le A \ll N$ and $N^{-1} \ll t \ll 1$,
we have
\begin{align}
\|\Xi_0 (\vec u_{0,n})(t)\|_{H^s}
&\les
RN^s A^{\frac d2}
+ 1,
\label{it04a0NormHs}
\\
\|\Xi_1 (\vec u_{0,n})(t)\|_{H^s}
&\les
 \frac tN \sum_{m = 1}^k R^m A^{md} + t^2,
\label{it04a1NormHs}
\\
\|\Xi_2 (\vec u_{0,n})(t)\|_{H^s}
&\les
\bigg(\frac tN \bigg)^2 \sum^{2k-1}_{m = k+1} R^m A^{md}  + \frac{t^3}{N} \sum^{k}_{m = 1} R^m A^{md}  + t^4 .
\label{it04a2NormHs}
\end{align}
\end{lemma}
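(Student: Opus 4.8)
The plan is to unwind the definition of $\Xi_j(\vec u_{0,n})$ for $j = 0, 1, 2$ as sums over trees in $\BT(j)$, and to bound the resulting space-time convolution integrals by separating the two contributions to $\vec u_{0,n} = \vec u_0 + \vec\phi_n$. The key point is that, on the Fourier side, $V(t)$ applied to a terminal leaf produces a symbol of the form $e^{-\frac12|\xi|t}\bigl(V_0(t,\xi)\,\widehat{u_0}(\xi) + V_1(t,\xi)\,\widehat{u_1}(\xi)\bigr)$ for the fixed datum and $e^{-\frac12|\xi|t}V_0(t,\xi)\,R\,\ind_{\O_k}(\xi)$ for the inflationary datum, and that each Duhamel integral $I_k$ contributes, after the $t'$-integration, a factor of size $\les \frac tN$ whenever the output frequency $\xi$ satisfies $|\xi|\ges N$ and a factor $\les t^2$ when $|\xi|\les 1$ (exactly the dichotomy already exploited in Proposition~\ref{PROP:NewEst1}, where the oscillatory integral $I(t,\overline\xi)$ had size $\frac1N + \frac1{N^2}$). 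Since $\|\cdot\|_{H^s} \le \|\cdot\|_{L^2}$ for $s<0$ and $\|\ind_{\O_k}^{[m]}\ast|\widehat{\vec u_0}|^{[r]}\|_{L^2}\les A^{md}$ by \eqref{estconvIDs}, the $A$-powers are then counted simply by how many leaves carry the $\ind_{\O_k}$ datum.

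First I would treat $\Xi_0$: here $\Xi_0(\vec u_{0,n})(t) = V(t)\vec u_{0,n} = V(t)\vec u_0 + R\, V(t)(\phi_n,0)$, so $\|\Xi_0\|_{H^s}\les \|\vec u_0\|_{\H^s} + R\|\ind_{\O_k}\jb\xi^s\|_{L^2_\xi}\les 1 + RN^sA^{\frac d2}$, using $e^{-\frac12|\xi|t}\le1$, $|V_0|,|V_1|\les1$ (the $|\xi|^{-1}$ in $V_1$ is harmless since we may trade it against $\jb\xi^{-1}$ and $s<0$), and $|\O_k|\sim A^d$ with $|\xi|\sim N$ on $\O_k$. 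Next, for $\Xi_1(\vec u_{0,n})(t) = I_k(V(t)\vec u_{0,n})(t)$: expanding $\prod_{j=1}^k (V(t)\vec u_0 + R V(t)(\phi_n,0))$ by multilinearity gives $2^k$ terms indexed by which subset $S\subseteq\{1,\dots,k\}$ of leaves uses $\phi_n$; the term with $|S| = m$ has $\widehat{\cdot}$ bounded (in $L^\infty_t L^2_\xi$, via Young) by $R^m\,\bigl(\text{size of }I_k\text{ integral}\bigr)\cdot\|\ind_{\O_k}^{[m]}\ast|\widehat{\vec u_0}|^{[k-m]}\|_{L^2}$, which by \eqref{estconvIDs} is $\les R^m A^{md}$ times the Duhamel factor. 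The Duhamel factor is $\les\frac tN$ when $m\ge1$ (at least one leaf sits at frequency $\sim N$, forcing the phase $\Phi(\overline\xi)\sim N$ in the integration-by-parts of the $t'$-integral as in \eqref{lowb3}), and is $\les t^2$ when $m=0$ (all leaves low-frequency, crude bound $\int_0^t|V_1(t-t',\xi)|\,dt'\les t^2$), giving \eqref{it04a1NormHs}.

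Finally, $\Xi_2(\vec u_{0,n})(t) = \sum_{\TT\in\BT(2)}\Psi(\TT)(\vec u_{0,n})(t)$; a tree in $\BT(2)$ has one non-terminal leaf nested inside another, with $2k-1$ total leaves. Again split each leaf between $\vec u_0$ and $\phi_n$: if $m$ leaves carry $\ind_{\O_k}$, the $L^2_\xi$ size of the spatial Fourier transform is $\les R^m A^{md}$ times the product of two Duhamel factors. The outer $I_k$ contributes $\frac tN$ or $t^2$ according to whether its output frequency is $\sim N$ or $\les1$; the inner one likewise. The frequency of the inner output equals the sum of its $k$ leaf frequencies, so it is $\sim N$ precisely when not all inner-leaf frequencies cancel — one checks from $\Si_k$ that the number of $\ind_{\O_k}$-leaves $m$ in each case forces $m\ge k+1$ in the $(\frac tN)^2$ regime (both outputs at frequency $N$, so $m$ leaves split with at least one in the inner subtree and one in the outer), $1\le m\le k$ in the $\frac{t^3}{N}$ regime (one output high, one output low, so $t^2\cdot\frac tN$), and $m = 0$ in the $t^4$ regime; summing over the $O(1)$ trees in $\BT(2)$ and over admissible $m$ yields \eqref{it04a2NormHs}.

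The main obstacle is the careful bookkeeping of the Duhamel factor's dichotomy: one must verify that the frequency-localization $|\O_k| \sim A^d \ll N$ together with the structure of $\Si_k$ indeed forces each relevant output frequency either to be comparable to $N$ (so the integration-by-parts argument of Proposition~\ref{PROP:NewEst1} applies and gains $\frac1N$) or to be $O(1)$ (so only the trivial $t^2$ bound is available), with no intermediate scales contributing, and that the resonance condition $\eta_1+\cdots+\eta_k=0$ on $\Si_k$ correctly pins down which values of $m$ can occur in each regime — this is exactly the place where the restriction $2\le k\le5$ and the explicit choice of $\Si_k$ are used. Once this dichotomy is established, the $A$-power counting and the passage to $H^s$ via \eqref{estconvIDs} are routine.
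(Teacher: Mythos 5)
There is a genuine gap, and it lies exactly where you locate the ``main obstacle''. Your stated rule for the Duhamel factor --- $\les \frac tN$ when the \emph{output} frequency satisfies $|\xi|\ges N$ and $\les t^2$ when $|\xi|\les 1$ --- is not the mechanism that proves \eqref{it04a1NormHs}, and applied literally it fails: the dominant interaction (all $k$ leaves carrying $R\ind_{\O_k}$, with $\eta_1+\cdots+\eta_k=0$ so that the output lands in $Q_A$) has a \emph{low} output frequency, so your dichotomy would only give the crude $t^2$ bound there, i.e. $t^2R^kA^{kd}$, which is strictly worse than the claimed $\frac tN R^kA^{kd}$ in the regime $t\gg N^{-1}$. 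The correct rule is input-driven, not output-driven: one gains $\frac1N$ whenever at least one \emph{input} frequency is $\sim N$, and this requires two distinct mechanisms depending on the output. When $|\xi|\ll N$, one has $\Phi(\overline\xi)=|\xi|-\sum_j|\xi_j|\le -cN$, and the dissipative factor $e^{\frac12\Phi(\overline\xi)t'}$ inside the $t'$-integral produces $\int_0^te^{-\frac c2Nt'}(t-t')\,\mathrm{d}t'\les \frac tN$; when $|\xi|\ges N$, the phase $\Phi(\overline\xi)$ can be $O(1)$ (e.g.\ one high leaf, the rest smooth data, output $\sim N$), so your justification ``one high leaf forces $\Phi(\overline\xi)\sim N$'' breaks down, and instead one uses $|V_1(t-t',\xi)|\les|\xi|^{-1}\les N^{-1}$ from \eqref{EQ:DefiW}. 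Your write-up invokes only the first mechanism (and, in the opening paragraph, attributes the $\frac1N$ gain to the wrong frequency regime via Proposition~\ref{PROP:NewEst1}, whose computation is precisely the low-output case), so the case $m\ge1$ with $|\xi|\ges N$ is not covered.

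A second, related misconception: no dichotomy of output scales (``$\sim N$ or $O(1)$, no intermediate scales'') is available or needed. Since $\widehat{u}_0,\widehat{u}_1$ are Schwartz, not compactly supported, output frequencies range over all scales; the proof simply splits $|\xi|\ll N$ versus $|\xi|\ges N$ and handles both as above. Likewise, the classification of $m$-ranges in \eqref{it04a2NormHs} does not come from the resonance condition on $\Si_k$ nor from $2\le k\le 5$ (the lemma is stated and proved for all $k\ge2$; the restriction $k\le5$ enters only in the lower bound of Lemma~\ref{BoundBigI}). It comes from pigeonholing the high leaves between the two Duhamel integrals: the inner subtree has $k$ leaves and the outer has $k-1$, so $m\ge k+1$ forces at least one high leaf on each integral, hence $(\frac tN)^2$; for $1\le m\le k$ only one integral is guaranteed a high leaf, hence $\frac{t^3}N$; and $m=0$ gives $t^4$. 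With the input-driven rule in place, your bookkeeping via \eqref{estconvIDs} and the $A$-power counting is fine, but as written the central estimate is not established.
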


\begin{proof}
Observe that, with \eqref{estconvIDs} and \eqref{eq:phin}, it suffices to prove
\begin{align}
|\F_x \big[\Xi_0 (\vec u_{0,n})\big](t,\xi)|
&\les
e^{-\frac 12 |\xi| t}
\Big(
R \ind_{\O_k}(\xi)
+ |\ft{\vec u_0}| (\xi) \Big),
\label{it04a0}
\\
|\F_x \big[\Xi_1 (\vec u_{0,n})\big](t,\xi)|
&\les
e^{-\frac 12 |\xi| t} \frac tN
\Bigg( \sum_{m = 1}^k R^m \big( \ind_{\O_k}^{[m]} \ast \abs{\ft{\vec u_0}}^{[k-m]} \big) (\xi)
+ t^2  |\ft{\vec u_0}|^{[k]} (\xi)
\Bigg),
\label{it04a1}
\\
|\F_x \big[\Xi_2 (\vec u_{0,n})\big](t,\xi)|
&\les
e^{-\frac 12 |\xi| t}
\Bigg(
\bigg(\frac tN \bigg)^2 \sum^{2k-1}_{m = k+1} R^m \big(\ind_{\O_k}^{[m]} \ast |\ft{\vec u_0}|^{[2k - 1 - m]} \big)(\xi) \notag \\
& \qquad + \frac{t^3}{N} \sum^{k}_{m = 1} R^m \big(\ind_{\O_k}^{[m]}\ast |\ft{\vec u_0}|^{[2k - 1 - m]} \big)(\xi)  + t^4  |\ft{\vec u_0}|^{[2k - 1 ]} (\xi) \Bigg).
\label{it04a2}
\end{align}

By \eqref{EQ:DefiV} and \eqref{eq:absu0},
we have
\eqref{it04a0}.
Let us now turn to \eqref{it04a1} and denote again $\Phi(\overline{\xi})$ as in \eqref{lowb2}.
Using \eqref{EQ:DefiDuhamelOp}, \eqref{EQ:DefiW} and \eqref{it04a0}, we have 
\begin{align}
&|\F_x \big[\Xi_1 (\vec u_{0,n})\big](t,\xi)|
\notag
\\
&\les
e^{-\frac 12 |\xi| t} 
\int_0^t \int_{\G_\xi} e^{\frac 12 \Phi(\overline{\xi})  t'} \abs{V_1(t-t', \xi)} \prod^k_{r = 1} \big(R \ind_{\O_k} + \abs{\ft{\vec u_0}}\big)(\xi_r) \mathrm{d}\xi_\G \mathrm{d}t' \notag \\
	& \les e^{-\frac 12 |\xi| t}  \sum^{k}_{m = 1} \binom{k}{m} R^m \int_{\G_\xi} \int^t_0 e^{\frac 12 \Phi(\overline{\xi})  t'} \abs{V_1(t-t', \xi)} \mathrm{d}t'
	\bigg( \prod^m_{r = 1} \ind_{\O_k}(\xi_r) \bigg) \bigg( \prod^k_{r = m+1} \abs{\ft{\vec u_0}} (\xi_r) \bigg) \mathrm{d}\xi_\G  \notag \\
	& \qquad + e^{-\frac 12 |\xi| t}  \int_{\G_\xi} \int^t_0 e^{\frac 12 \Phi(\overline{\xi})  t'} \abs{V_1(t-t', \xi)} \mathrm{d}t' \prod^k_{r = 1} \abs{\ft{ \vec u_0}} (\xi_r) \mathrm{d}\xi_\G \notag\\
	& \eqqcolon e^{- \frac 12 \abs{\xi} t} \bigg( \sum^k_{m = 1} I_m (t, \xi) + I_0 (t, \xi)\bigg).
\label{eq:sumI1}
\end{align}

\noi The idea is to study $I_0$ and $I_m$, $m = 1, \dots , k$, separately.

In the case of $I_0$, observe that since $\Phi(\overline{\xi}) \leq 0$ and $\abs{V_1(t-t', \xi)} \leq t - t'$, we get 
\begin{equation}
I_0 (t, \xi) \leq \int_{\G_\xi} \int^t_0 (t - t') \mathrm{d}t' \prod^k_{r = 1} \abs{\ft{ \vec u_0}} (\xi_r) \mathrm{d}\xi_\G
\les t^2 \abs{\ft{ \vec u_0}}^{[k]} (\xi).
\label{eq:estI01}
\end{equation}

\noi
For the study of $I_m$, $m = 1 , \dots, k$, we consider two cases.

{\bf Case 1:} When $\abs{\xi} \ll N$, observe that $\abs{\xi_1} \sim N$ by \eqref{EQ:Defiphi_n}, since $m \geq 1$.
Namely,
there exists a constant $c>0$ such that $|\xi|- |\xi_1| \le -c N$.
Then, using the inequality $\abs{V_1(t-t', \xi)} \leq t - t'$ and \eqref{eq:convIt}, we get
\begin{align}
I_m (t, \xi) \ind_{\{ \abs{\xi} \ll N\}} (\xi) & \les R^m \int_{\G_\xi} \int^t_0 e^{\frac 12 (\abs{\xi} - \abs{\xi_1}) t'} (t-t') \mathrm{d}t'
\bigg( \prod^m_{r = 1} \ind_{\O_k}(\xi_r) \bigg) \bigg( \prod^k_{r = m+1} \abs{\ft{\vec u_0}} (\xi_r) \bigg) \mathrm{d}\xi_\G \notag \\
	& \les R^m \int_{\G_\xi} \int^t_0 e^{- \frac c2 N t'} (t-t') \mathrm{d}t'
	\bigg( \prod^m_{r = 1} \ind_{\O_k}(\xi_r) \bigg) \bigg( \prod^k_{r = m+1} \abs{\ft{\vec u_0}} (\xi_r) \bigg) \mathrm{d}\xi_\G \notag \\
	& \les R^m \frac tN \big(\ind_{\O_k}^{[m]} \ast \abs{\ft{ \vec u_0}}^{[k - m]} \big)(\xi).
\label{eq:estIm1a}
\end{align}

{\bf Case 2:} When $ \abs{\xi} \ges N$, let us use the inequality $\abs{V_1(t-t', \xi)} \leq \frac 1N$, $\Phi(\overline{\xi}) \leq 0$ and \eqref{eq:convIt} to get
\begin{align}
I_m (t, \xi) \ind_{\{ \abs{\xi} \ges N \}} (\xi) & \les R^m \int_{\G_\xi} \int^t_0 \frac 1N \mathrm{d}t'
\bigg( \prod^m_{r = 1} \ind_{\O_k}(\xi_r) \bigg) \bigg( \prod^k_{r = m+1} \abs{\ft{\vec u_0}} (\xi_r) \bigg) \mathrm{d}\xi_\G \notag \\
	& \les R^m \frac tN \big(\ind_{\O_k}^{[m]} \ast \abs{\ft{ \vec u_0}}^{[k - m]} \big)(\xi).
\label{eq:estIm1b}
\end{align}

\noi Then, \eqref{it04a1} follows from \eqref{eq:sumI1}, \eqref{eq:estI01}, \eqref{eq:estIm1a} and \eqref{eq:estIm1b}.

Finally, let us turn to the proof of \eqref{it04a2}. Observe that, by \eqref{EQ:DefiDuhamelOp}, \eqref{EQ:DefiW}, \eqref{it04a1}, \eqref{it04a0} and by symmetry, we have with the same notations as before
\begin{align}
|\F_x \big[\Xi_2 (\vec u_{0,n})\big](t,\xi)|
& \les e^{- \frac 12 \abs{\xi}t} \int^t_0 \int_{\G_\xi} e^{\frac 12 \Phi(\overline{\xi}) t'} \abs{V_1 (t-t', \xi)} \notag \\
	& \qquad \times  \Bigg( \frac{t'}{N} \sum_{m = 1}^k R^m \big( \ind_{\O_k}^{[m]} \ast \abs{\ft{\vec u_0}}^{[k-m]} \big) (\xi_1)
+ t'^2 \big( |\ft{\vec u_0}|^{[k]}\big) (\xi_1)
\Bigg) \notag \\
	& \qquad \times \prod^k_{r = 2} \bigg( R \ind_{\O_k} (\xi_r) + |\ft{\vec u_0}|(\xi_r) \bigg) \mathrm{d}\xi_\G \mathrm{d}t' \notag \\
	 & \eqqcolon e^{-\frac 12 \abs{\xi} t} \big(I^{(2)}_1 (t, \xi) + I^{(2)}_2 (t, \xi) + I^{(2)}_3 (t, \xi) \big)
\label{eq:splitu2}
\end{align}

\noi where we define $I^{(2)}_1$, $I^{(2)}_2$ and $I^{(2)}_3$ by
\begin{equation}
\label{eq:defI21}
\begin{aligned}
I^{(2)}_1 (t, \xi) = \int_{\G_\xi} \int^t_0   e^{\frac 12 \Phi(\overline{\xi}) t'} \abs{V_1 (t-t', \xi)}  \frac{t'}{N}  & \bigg( \sum_{m = 1}^k R^m \big( \ind_{\O_k}^{[m]} \ast \abs{\ft{\vec u_0}}^{[k-m]} \big) (\xi_1) \bigg) \\ 
	&  \times \prod^k_{r = 2} \bigg( R \ind_{\O_k} (\xi_r) + |\ft{\vec u_0}|(\xi_r) \bigg) \mathrm{d}\xi_\G \mathrm{d}t',
\end{aligned}
\end{equation}
\begin{align}
\notag
I^{(2)}_2 (t, \xi) & = \sum_{\overline{v} = (v_2, \dots, v_k) \in E} \int_{\G_\xi} \int^t_0   e^{\frac 12 \Phi(\overline{\xi}) t'} \abs{V_1 (t-t', \xi)}  t'^2 |\ft{\vec u_0}|^{[k]} (\xi_1) \prod^k_{r = 2} v_r (\xi_r) \mathrm{d}\xi_\G \mathrm{d}t' \\
	& \eqqcolon  \sum_{\overline{v} = (v_2, \dots, v_k) \in E} I^{(2)}_{2, \overline{v}} (t, \xi) \notag
\end{align}

\noi where $E$ is defined by 
\begin{equation}
\label{eq:defE}
E = \bigg\{ (v_2, \dots, v_k) \in \big\{ R\ind_{\O_k}, |\ft{\vec u_0}|\big\}^{k-1} \bigg| (v_2, \dots , v_k ) \neq ( |\ft{\vec u_0}|, \dots, |\ft{\vec u_0}| ) \bigg\}
\end{equation}

\noi and
\begin{equation}
\notag
I^{(2)}_3 (t, \xi) = \int_{\G_\xi} \int^t_0   e^{\frac 12 \Phi(\overline{\xi}) t'} \abs{V_1 (t-t', \xi)}  t'^2 |\ft{\vec u_0}|^{[k]} (\xi_1) \prod^k_{r = 2} |\ft{\vec u_0}| (\xi_r) \mathrm{d}\xi_\G \mathrm{d}t'.
\end{equation}

First, observe that in a similar way as for $I_0$ in the proof of \eqref{it04a1}, the inequalities $\abs{V_1 (t-t', \xi)} \leq t-t'$ and $\Phi(\overline{\xi}) \leq 0$  give
\begin{equation}
\label{eq:estI23}
I^{(2)}_3 (t, \xi)  \leq \int_{\G_\xi} \int^t_0 (t-t')t'^2 \mathrm{d}t'  |\ft{\vec u_0}|^{[k]} (\xi_1) \prod^k_{r = 2} |\ft{\vec u_0}| (\xi_r)\mathrm{d}\xi_\G \les t^4 |\ft{\vec u_0}|^{[2k-1]} (\xi).
\end{equation}

Now, let us turn to $I^{(2)}_2$. Let us fix some $\overline{v} \in E$. According to \eqref{eq:defE}, there are $ 1 \leq m \leq k-1$ functions $v_r$ such that $v_r = R \ind_{\O_k}$ and, by \eqref{Omek}, $\abs{\xi_{r}} \sim N$. Then, we can use the same argument as for \eqref{it04a1}, namely:
\begin{itemize}
	\item when $\abs{\xi} \ll N$, by using $\Phi(\overline{\xi}) \le -c N$ and $\abs{V_1 (t-t', \xi)} \leq t-t'$, we get after rearranging and from \eqref{eq:convIt}
\begin{align*}
&\ind_{\{\abs{\xi} \ll N\}}I^{(2)}_{2, \overline{v}} (t, \xi)
\\
&\les R^m \int_{\G_\xi} \int^t_0 e^{- \frac c2 Nt'} (t-t') t'^2 \mathrm{d}t' |\ft{\vec u_0}|^{[k]} (\xi_1) 
\bigg( \prod^{m+1}_{r = 2} \ind_{\O_k} (\xi_r) \bigg) \bigg( \prod^k_{r = m+2}|\ft{\vec u_0}| (\xi_r) \bigg) \mathrm{d}\xi_\G \\
	& \les R^m \frac{t^3}{N} \big(\ind_{\O_k}^{[m]} \ast |\ft{\vec u_0}|^{[2k - 1 - m]} \big) (\xi),
\end{align*}
	\item when $\abs{\xi} \ges N$, by using $\Phi(\overline{\xi}) \leq 0$ and $\abs{V_1 (t-t', \xi)} \leq \frac 1N$, we get after rearranging and from \eqref{eq:convIt}
\begin{align*}
\ind_{\{\abs{\xi} \ges N\}}I^{(2)}_{2, \overline{v}} (t, \xi)  & \les R^m \int_{\G_\xi} \int^t_0 \frac 1N t'^2  \mathrm{d}t' |\ft{\vec u_0}|^{[k]} (\xi_1)
\bigg( \prod^{m+1}_{r = 2} \ind_{\O_k} (\xi_r) \bigg) \bigg( \prod^k_{r = m+2}|\ft{\vec u_0}| (\xi_r) \bigg) \mathrm{d}  \xi_\G \\
	& \les R^m \frac{t^3}{N} \big(\ind_{\O_k}^{[m]} \ast |\ft{\vec u_0}|^{[2k - 1 - m]} \big) (\xi).
\end{align*}
\end{itemize}

\noi Therefore, we get
\begin{equation}
\label{eq:estI22}
I^{(2)}_{2} (t, \xi) \les  \frac{t^3}{N} \sum^{k-1}_{m = 1} R^m \big(\ind_{\O_k}^{[m]} \ast |\ft{\vec u_0}|^{[2k - 1 - m]} \big)(\xi).
\end{equation}

On the other hand, observe that we can rearrange \eqref{eq:defI21} to get 
\begin{equation}
\notag
\begin{aligned}
I^{(2)}_1 (t, \xi)
&= \sum^{2k-1}_{m = 1} R^m \int_{\G_\xi} \int^t_0  e^{\frac 12 \Phi(\overline{\xi}) t'} \abs{V_1 (t-t', \xi)}  \frac{t'}{N}
\\&
\quad
\times
\bigg[ \sum_{r = 1}^k C_{m,r} \big( \ind_{\O_k}^{[r]} \ast \abs{\ft{\vec u_0}}^{[k-r]} \big) (\xi_1) 
	\bigg( \prod^{m-r + 1}_{s = 2} \ind_{\O_k} (\xi_s) \bigg) \bigg( \prod_{\widetilde{s} = m-r +2 }^k |\ft{\vec u_0}|(\xi_{\widetilde{s}}) \bigg)  \bigg] \mathrm{d}\xi_\G \mathrm{d}t',
\end{aligned}
\end{equation}

\noi where $C_{m, r} \geq 0$ are constants such that $C_{m,r} = 0$ if $r > m$ and we use the convention $\prod^{m-r + 1}_{s = 2} \ind_{\O_k} (\xi_s) = 1$ if $r = m$. We split the first sum into two parts that we denote $S_1$ and $S_2$:
\begin{equation}
\notag
\sum^{2k-1}_{m = 1} = \sum^{2k-1}_{m = k + 1} + \sum^{k}_{m = 1} \implies I^{(2)}_1 = S_1 + S_2.
\end{equation}

\noi In the case of $S_2$, by taking $\abs{V_1 (t-t', \xi)} \leq t-t'$, $e^{\frac 12 \Phi(\overline{\xi})t'} \leq 1$ as before and by applying the commutativity of convolution, we get 
\begin{equation}
\label{eq:estS2}
S_2 (t, \xi) \les \sum^k_{m=1} R^m \frac{t^3}{N} \big( \ind_{\O_k}^{[m]} \ast \abs{\ft{\vec u_0}}^{[2k-1-m]} \big) (\xi).
\end{equation}

\noi In the case of $S_1$, let us denote $S_1 = \sum^{2k-1}_{m = k+1} I^{(2)}_{1,m}$ and fix $k + 1 \leq m \leq 2k -1$. Since $m \geq k+1$ and $r \leq k$, we know from \eqref{Omek} that $\abs{\xi_2} \sim N$. Therefore, we can use the same argument as for \eqref{eq:estI22}, namely:
\begin{itemize}
	\item when $\abs{\xi} \ll N$, by taking $\abs{V_1 (t-t', \xi)} \leq t-t'$, we get from $\Phi(\overline{\xi}) \le -cN$ and \eqref{eq:convIt} that
\begin{align*}
\ind_{\{\abs{\xi} \ll N\}} I^{(2)}_{1,m} (t, \xi) & \les \frac{R^m}{N} \int_{\G_\xi} \int^t_0 e^{- \frac c2 Nt'}(t-t')t' \mathrm{d}t'  \bigg[ \sum_{r = 1}^k C_{m,r} \big( \ind_{\O_k}^{[r]} \ast \abs{\ft{\vec u_0}}^{[k-r]} \big) (\xi_1) \\ 
	&  \qquad \times
	\bigg( \prod^{m-r + 1}_{s = 2} \ind_{\O_k} (\xi_s) \bigg) \bigg( \prod_{\widetilde{s} = m-r +2 }^k |\ft{\vec u_0}|(\xi_{\widetilde{s}}) \bigg) \bigg] \mathrm{d}\xi_\G \\
	& \les R^m \bigg( \frac tN \bigg)^2 \big( \ind_{\O_k}^{[m]} \ast \abs{\ft{\vec u_0}}^{[2k - 1 -m]} \big) (\xi),
\end{align*}
	\item when $\abs{\xi} \ges N$, by taking $\abs{V_1 (t-t', \xi)} \leq \frac 1N$, we get from $\Phi(\overline{\xi}) \leq 0$ and \eqref{eq:convIt} that
\begin{align*}
\ind_{\{\abs{\xi} \ges N\}} I^{(2)}_{1,m} (t, \xi) & \les \frac{R^m}{N} \int_{\G_\xi} \int^t_0 \frac 1N t' \mathrm{d}t'  \bigg[ \sum_{r = 1}^k C_{m,r} \big( \ind_{\O_k}^{[r]} \ast \abs{\ft{\vec u_0}}^{[k-r]} \big) (\xi_1) \\ 
	&  \qquad \times
	\bigg( \prod^{m-r + 1}_{s = 2} \ind_{\O_k} (\xi_s) \bigg) \bigg( \prod_{\widetilde{s} = m-r +2 }^k |\ft{\vec u_0}|(\xi_{\widetilde{s}}) \bigg) \bigg] \mathrm{d}\xi_\G \\
	& \les R^m \bigg( \frac tN \bigg)^2 \big( \ind_{\O_k}^{[m]} \ast \abs{\ft{\vec u_0}}^{[2k - 1 -m]} \big) (\xi).
\end{align*}
\end{itemize}

\noi Thus, we get 
\begin{equation}
\label{eq:estS1}
S_1 (t, \xi) \les \sum^{2k-1}_{m = k+1} R^m \bigg( \frac tN \bigg)^2 \big( \ind_{\O_k}^{[m]} \ast \abs{\ft{\vec u_0}}^{[2k - 1 -m]} \big) (\xi)
\end{equation}

\noi and from \eqref{eq:estS1} and \eqref{eq:estS2}, we get
\begin{equation}
\label{eq:estI21}
\begin{aligned}
I^{(2)}_1 (t, \xi)
&\les
\sum^{2k-1}_{m = k+1} R^m \bigg( \frac tN \bigg)^2 \big( \ind_{\O_k}^{[m]} \ast \abs{\ft{\vec u_0}}^{[2k - 1 -m]} \big) (\xi)
\\
&\quad
+
\sum^{k}_{m = 1} R^m \frac{t^3}N \big( \ind_{\O_k}^{[m]} \ast \abs{\ft{\vec u_0}}^{[2k - 1 -m]} \big) (\xi)
.
\end{aligned}
\end{equation}

\noi Finally, \eqref{it04a2} follows from \eqref{eq:splitu2}, \eqref{eq:estI23}, \eqref{eq:estI22} and \eqref{eq:estI21}.
\end{proof}

Observe that, by using the same idea as for the proof of \eqref{it04a1NormHs}, we have, for any $s < 0$ and $N^{-1} \ll t \ll 1$,
\begin{equation}
\label{eq:normdiffXi1}
\| \Xi_1 (\vec u_{0,n})(t) - \Xi_1 (\vec \phi_{n})(t)\|_{H^s} \les \frac tN \sum_{m = 1}^{k-1} R^m + t^2.
\end{equation}

We prove another iterative bound:

\begin{lemma}
\label{lem:ujit1}
For any $s < 0$, $k \geq 2$, $j \in \NB_0$ and $N^{-1} \ll t \ll 1$,
we have
\begin{equation}
\| \Xi_j (\vec u_{0,n})(t)\|_{H^s}
\les C^{j}
t^{2j}
\bigg(
\sum_{m=1}^{(k-1)j + 1} R^{m}A^{md} (tN)^{- \frac{m}k} + 1
\bigg)
.
\notag
\end{equation}
\end{lemma}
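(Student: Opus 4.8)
The plan is to deduce the estimate from a pointwise-in-frequency bound on each Picard iterate, proved by strong induction on $j$ via the recursion
$$
\Xi_j(\vec u_{0,n}) = \sum_{\substack{j_1,\dots,j_k\ge 0\\ j_1+\cdots+j_k=j-1}} I_k\big(\Xi_{j_1}(\vec u_{0,n}),\dots,\Xi_{j_k}(\vec u_{0,n})\big),
$$
which follows from $\Psi(\TT)=I_k(\Psi(\TT_1),\dots,\Psi(\TT_k))$ ($\TT_1,\dots,\TT_k$ the subtrees at the children of the root) together with the multilinearity of $I_k$ from \eqref{EQ:DefiDuhamelOp}. Concretely, I would show that for all $j\ge 0$, $s<0$, $1\le A\ll N$ and $N^{-1}\ll t\ll 1$,
$$
\big|\F_x[\Xi_j(\vec u_{0,n})](t,\xi)\big| \les C^j\, e^{-\frac 12|\xi|t}\, t^{2j}\sum_{m=0}^{(k-1)j+1} R^m (tN)^{-\lceil m/k\rceil}\big(\ind_{\O_k}^{[m]}\ast|\ft{\vec u_0}|^{[(k-1)j+1-m]}\big)(\xi),
$$
where $\ind_{\O_k}^{[m]}$ is as in \eqref{eq:convIt} and the $m=0$ term means $|\ft{\vec u_0}|^{[(k-1)j+1]}$. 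The base cases $j=0,1,2$ are \eqref{it04a0}, \eqref{it04a1}, \eqref{it04a2} (the display above is a possibly lossy weakening of these, using $tN\gg 1$); for the inductive step one inserts the hypothesis into each factor $\F_x[\Xi_{j_i}](t',\xi_i)$ appearing in the Fourier form of $I_k$ dictated by the Duhamel kernel $W$ of \eqref{EQ:DefiW} and \eqref{EQ:DefiDuhamelOp}.

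In carrying out the inductive step, the two new mechanisms beyond the proof of Lemma \ref{lem:it2t} are keeping the exponential weight and extracting the correct number of gains. Since $\sum_i|\xi_i|\ge|\xi|$, the product $\prod_i e^{-\frac 12|\xi_i|t'}$ coming from the children combines with the prefactor $e^{-\frac 12|\xi|(t-t')}$ to produce $e^{-\frac 12|\xi|t}$; moreover, if one of the children is a terminal node carrying the high part $V(t)\vec\phi_n$ of $\vec u_{0,n}$ (i.e. $j_i=0$ and the leaf is assigned the high part, so that $\xi_i\in\O_k$ and $|\xi_i|\sim N$ by \eqref{eq:phin} and \eqref{Omek}), then $\sum_i|\xi_i|\ge|\xi|+cN$, leaving a surplus factor $e^{-cNt'}$. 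One then splits the $t'$-integral exactly as in Lemma \ref{lem:it2t}: on $|\xi|\gtrsim N$ use $|V_1(t-t',\xi)|\les N^{-1}$ from \eqref{EQ:DefiW}; on $|\xi|\ll N$ with a high leaf directly below the root use $|V_1(t-t',\xi)|\le t-t'$ together with $\int_0^t e^{-cNt'}(t')^a\,\mathrm{d}t'\les N^{-(a+1)}$; and otherwise use $|V_1(t-t',\xi)|\le t-t'$ and $e^{\frac{t'}2\Phi(\overline\xi)}\le 1$ with $\Phi$ as in \eqref{lowb2}. Using $\sum_i j_i=j-1$, $\sum_i(\text{number of good internal nodes of }\TT_i)\le j-1$ and $tN\gg 1$, one checks in each case that the powers of $t$ and of $N$ separately reproduce the claimed form at generation $j$, the number of \emph{good} internal nodes --- those having at least one high terminal child --- increasing by one precisely when the root has a high terminal child.

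The exponent $\lceil m/k\rceil$ then comes from a pigeonhole count: fixing $\TT\in\BT(j)$ and an assignment of high/low parts to its $(k-1)j+1$ leaves with $m$ of them high, each high leaf (for $j\ge1$) has a parent, which is an internal node with a high terminal child, and since each internal node has exactly $k$ children it is the parent of at most $k$ leaves; hence there are at least $\lceil m/k\rceil$ such distinct parents, yielding the factor $(tN)^{-\lceil m/k\rceil}\le(tN)^{-m/k}$. To conclude, sum the pointwise bound over the $\le C_0^j$ trees of Lemma \ref{LEM:NumberOfTrees} (the $2^{(k-1)j+1}$ choices of high/low leaves being absorbed into $C^j$), drop $e^{-\frac 12|\xi|t}\le 1$, apply $\|\ind_{\O_k}^{[m]}\ast|\ft{\vec u_0}|^{[r]}\|_{H^s}\les A^{md}$ from \eqref{estconvIDs} for $m\ge1$ and $\||\ft{\vec u_0}|^{[(k-1)j+1]}\|_{H^s}\les1$ for the $m=0$ term, and use $tN\gg1$. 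I expect the main obstacle to be precisely the bookkeeping in the inductive step: high-frequency leaves can cancel one another inside a subtree, so a subtree "containing high frequencies" need not have output frequency $\sim N$; this forces the reliable $(tN)^{-1}$ gains to be attributed to high leaves that are \emph{immediate} children of an internal node, which is why the pigeonhole is phrased in terms of parents of high leaves rather than subtrees. The subsidiary difficulty is purely arithmetic --- verifying that the $t$- and $N$-exponents balance in every case of the $t'$-integral --- which goes through only because $tN\gg1$ and the total number of internal nodes, hence of possible gains, is exactly $j$.
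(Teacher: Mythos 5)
Your plan is correct in outline and shares the skeleton of the paper's proof: induction on the Picard iterates, a pointwise bound on $\F_x[\Xi_j]$ of the same shape, base cases from Lemma~\ref{lem:it2t}, and conclusion via \eqref{estconvIDs} and $tN\gg 1$. Where you genuinely differ is in how the factor $(tN)^{-m/k}$ is harvested. The paper extracts a gain of $(tN)^{-1/k}$ per high factor already at generation $0$, from the Poisson decay $e^{-\frac 12 |\xi| t}\les (tN)^{-1/k}$ on $\O_k$; after that, the inductive step for $j\ge 2$ is purely mechanical (only $|W(t-t',\xi)|\le t-t'$ and the time integral $\int_0^t (t-t')\,t'^{\,2(j-1)-\frac{m_1+\cdots+m_p}{k}}\mathrm{d}t'$ are used, with no case analysis and no further use of the dissipation), the exponent $m/k$ being exactly calibrated so these integrals converge; the price is the $\frac{C^{kj+1}}{(j+1)^2}$ weights, needed because the paper inducts on $\Xi_j$ and must sum over compositions $l_1+\cdots+l_k=j-1$. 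You instead take a full $(tN)^{-1}$ at every internal node with a high terminal child, repeating the $|\xi|\gtrsim N$ versus $|\xi|\ll N$ dichotomy of Lemma~\ref{lem:it2t} at each such node, and count such nodes by pigeonhole; the tree-by-tree organization plus Lemma~\ref{LEM:NumberOfTrees} sidesteps the composition-counting. This yields the slightly stronger exponent $\lceil m/k\rceil$, at the cost of carrying exponential weights and case analysis through the whole induction. Two points need care. First, your stated hypothesis cannot be fed back in for a high leaf: at $j=0$, $m=1$ it reads $(t'N)^{-1}$, singular as $t'\to 0$, and the parent's time integral would then diverge when several high leaves share a parent; as your mechanism paragraph indicates, leaves must be kept in exact form with $e^{-cNt'}$ left inside the $t'$-integral, so state the induction for subtrees rooted at internal nodes (whose $t$-exponent $2j_v-g_v\ge j_v\ge 0$ is nonnegative). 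Second, $\int_0^t e^{-cNt'}t'^a\,\mathrm{d}t'\les N^{-(a+1)}$ hides a constant of size $\Gamma(a+1)$, factorial in $j$; bound $t'^a\le t^a$ first and use $\int_0^t e^{-cNt'}\mathrm{d}t'\les N^{-1}$ to keep the overall constant at $C^j$.
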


\begin{proof}

Again, using \eqref{estconvIDs}, it suffices to prove
there exists $C= C(d,k)>0$ such that
\begin{equation}
\begin{aligned}
|\F_x & \big[\Xi_j (\vec u_{0,n})\big](t,\xi)|
\\
&\le
\frac{C^{kj+1}}{(j+1)^2}
t^{2j}
\bigg(
\sum_{m=1}^{(k-1)j + 1}
R^{m}
(tN)^{- \frac{m}k}
\Big(
\ind_{\O_k}^{[m]}
\ast
\abs{\ft{\vec u_0}}^{[(k - 1)j + 1 -m]}
\Big)
(\xi)
\\
&\hspace*{100pt}
+
\abs{\ft{\vec u_0}}^{[(k - 1)j + 1]} (\xi)
\bigg)
.
\end{aligned}
\label{it0jab}
\end{equation}

We use the induction argument.
Since $tN \gg 1$,
\eqref{it04a0} yields
$$
\abs{\F_x [\Xi_0 (\vec{u}_{0,n})] (t, \xi)}
\les e^{- \frac 12 \abs{\xi} t} \big( R \ind_{\O_k} (\xi) + \abs{\ft{\vec u_0}} (\xi) \big)
\les  R \frac{1}{(tN)^{\frac 1k}} \ind_{\O_k} (\xi) + \abs{\ft{\vec u_0}}  (\xi) .
$$
Moreover,
\eqref{it04a1} in
Lemma \ref{lem:it2t}
is a better estimate than
\eqref{it0jab} with $j=1$,
since $tN \gg 1$.

Assume that \eqref{it0jab} holds for up to $j-1$ with $j \ge 2$.
By \eqref{EQ:DefiDuhamelOp}, the induction hypothesis, and $\abs{W(t-t', \xi)} \leq t-t'$,
we have
\begin{align}
|\F_x & \big[\Xi_j (\vec u_{0,n})\big](t,\xi)|
\notag
\\
&\le
\sum_{\substack{ l_1, \dots, l_k \in \NB_0 \\ l_1+ \cdots + l_k=j-1}}
\int_0^t
\int_{\G_\xi}
\big| W(t - t',\xi)
\big|
\prod^k_{p=1} |\F_x \big[\Xi_{l_p} (\vec u_{0,n})\big](t',\xi_p)|
\mathrm{d}\xi_\G \mathrm{d}t' \notag \\
&\le
\sum_{\substack{l_1, \dots, l_k \in \NB_0 \\ l_1+ \cdots + l_k=j-1}}
\int_{\G_\xi}
\int_0^t
(t-t') \notag \\
	& \qquad \times 
\prod^k_{p = 1}
\Bigg(
\frac{C^{kl_p+1}}{(l_p+1)^2}
\sum_{m_p=1}^{(k-1)l_p + 1}
R^{m_p}
t'^{2l_p}
(t'N)^{-\frac{m_p}k}
\Big(
\ind_{\O_k}^{[m_p]}
\ast
\abs{\ft{\vec u_0}}^{[(k - 1)l_p + 1 -m_p]}
\Big)
(\xi_p)
\notag \\
& \qquad \qquad
+
\frac{C^{kl_p+1}}{(l_p+1)^2}
t'^{2l_p}
\abs{\ft{\vec u_0}}^{[(k - 1)l_p + 1]} (\xi_p)
\Bigg)
\mathrm{d}\xi_\G \mathrm{d}t' \notag \\
	& \eqqcolon
	\sum_{\substack{l_1, \dots, l_k \in \NB_0 \\ l_1+ \cdots + l_k=j-1}} \I_{\overline{l}}^{(j)} (t, \xi)
\label{eq:splituj}
\end{align}

\noi where $\overline{l} \coloneqq (l_1, \dots, l_k)$. Let us proceed to some notations. First let us denote, for any $l_p$,
\begin{align}
\text{sum}_{l_p} (t', \xi_p) & \coloneqq
\frac{C^{kl_p+1}}{(l_p+1)^2}
\sum_{m_p=1}^{(k-1)l_p + 1}
R^{m_p}
t'^{2l_p}
(t'N)^{-\frac{m_p}k}
\Big(
\ind_{\O_k}^{[m_p]}
\ast
\abs{\ft{\vec u_0}}^{[(k - 1)l_p + 1 -m_p]}
\Big) (\xi_p),
\label{eq:defsumlp} \\
\text{(ID)}_{l_p} (t', \xi_p) & \coloneqq
\frac{C^{kl_p+1}}{(l_p+1)^2}
t'^{2l_p}
\abs{\ft{\vec u_0}}^{[(k - 1)l_p + 1]} (\xi_p).
\label{eq:defIDlp}
\end{align}

\noi In addition, we expand $\I_{\overline{l}}^{(j)}$ and denote it this way
\begin{align}
\I_{\overline{l}}^{(j)} (t, \xi)
& = \int^t_0 \int_{\G_\xi} (t-t') \prod^k_{p = 1} \text{(ID)}_{l_p} (t', \xi_p) \mathrm{d}\xi_\G \mathrm{d}t'
\notag
\\
&\qquad
+ \bigg(\I_{\overline{l}}^{(j)} (t, \xi) - \int^t_0 \int_{\G_\xi} (t-t') \prod^k_{p = 1} \text{(ID)}_{l_p} (t', \xi_p) \bigg) \mathrm{d}\xi_\G \mathrm{d}t' \notag \\
	& \eqqcolon I^{(j)}_{\overline{l},0} (t, \xi) + I^{(j)}_{\overline{l},1} (t, \xi).
\label{eq:splitIcalj}
\end{align}

First, observe that according to \eqref{eq:defIDlp}
and $l_1+ \dots + l_k = j-1$, we have
\begin{align}
I^{(j)}_{\overline{l},0} (t, \xi)
&\leq
\bigg(\prod^k_{p=1} 
\frac{C^{kl_p+1}}{(l_p+1)^2} \bigg) \int^t_0 (t - t')t'^{2(j-1)} \mathrm{d}t' \abs{\ft{\vec u_0}}^{[(k - 1)(j-1) + k]} (\xi)
\notag
\\
&\les
\frac 1{(j+1)^2}
\bigg(\prod^k_{p=1} 
\frac{C^{kl_p+1}}{(l_p+1)^2} \bigg)
t^{2j} \abs{\ft{\vec u_0}}^{[(k - 1)j + 1]} (\xi).
\label{eq:estIj0}
\end{align}

\noi
On the other hand, observe that we can rewrite $I^{(j)}_{\overline{l},1}$ in the following way
\begin{align}
I^{(j)}_{\overline{l},1} (t, \xi) & = \sum_{\overline{v} = (v_{l_1}, \dots , v_{l_k}) \in \wt E}  \int_{\G_\xi} \int^t_0 (t-t') \prod^k_{p = 1} v_{l_p} (t', \xi_p) \mathrm{d}\xi_\G \mathrm{d}t' \notag \\
	& \eqqcolon \sum_{\overline{v} = (v_{l_1}, \dots , v_{l_k}) \in \wt E}  I^{(j)}_{\overline{l}, 1, \overline{v}} (t, \xi)
\label{eq:splitIj1}
\end{align}

\noi where $\wt E$ is defined by
\begin{equation}
\notag
\wt E = \Big\{ (v_{l_1}, \dots, v_{l_k})
\mid
v_{l_p} \in \big\{ \text{sum}_{l_p}, \text{(ID)}_{l_p} \big\},
\
(v_{l_1}, \dots , v_{l_k} ) \neq \big( \text{(ID)}_{l_1}, \dots, \text{(ID)}_{l_k} \big) \Big\}.
\end{equation}

\noi Observe then that, if we fix a term $ I^{(j)}_{\overline{l}, 1, \overline{v}}$, there are $1 \leq p \leq k$ functions $v_{l_r}$ such that $v_{l_r} = \text{sum}_{l_r}$. Without loss of generality, we can assume that our term is of the form
$$
I^{(j), \ast}_{\overline{l}, 1, \overline{v}} (t, \xi) = \int_{\G_\xi} \int^t_0 (t-t')
\bigg( \prod^p_{r = 1} \text{sum}_{l_r} (t', \xi_r) \bigg) \bigg( \prod^k_{r = p + 1} \text{(ID)}_{l_r} (t', \xi_r) \bigg) \mathrm{d}\xi_\G \mathrm{d}t'.
$$

\noi Then, \eqref{eq:defsumlp}, \eqref{eq:defIDlp} and the fact that $l_1 + \cdots + l_k = j-1$ give
\begin{align*}
&I^{(j), \ast}_{\overline{l}, 1, \overline{v}} (t, \xi)
\\
&= \bigg(\prod^k_{p=1}\frac{C^{kl_p+1}}{(l_p+1)^2} \bigg) \int_{\G_\xi} \int^t_0 (t-t') \sum^{(k-1)l_1 + 1}_{m_1 = 1} \cdots \sum^{(k-1)l_p + 1}_{m_p = 1}\Big[ R (t'N)^{- \frac{1}{k}} \Big]^{m_1 + \cdots + m_p}
 t'^{2(l_1 + \dots + l_p)}
\\
	& \qquad \times
	\bigg( \prod^p_{r = 1} \Big( \ind_{\O_k}^{[m_r]} \ast \abs{\ft{\vec u_0}}^{[(k - 1)l_r + 1 -m_r]} \Big) (\xi_r) \bigg)
	\bigg( \prod^k_{s = p+1} \Big(t'^{2l_s} \abs{\ft{\vec u_0}}^{[(k - 1)l_s + 1} \Big) (\xi_s) \bigg) \mathrm{d}\xi_\G \mathrm{d}t' \\
	& = \bigg(\prod^k_{p=1} \frac{C^{kl_p+1}}{(l_p+1)^2} \bigg) \sum^{(k-1)l_1 + 1}_{m_1 = 1} \cdots \sum^{(k-1)l_p + 1}_{m_p = 1} \bigg(\frac{R}{N^{\frac 1k}}\bigg)^{m_1 + \cdots + m_p} \int^t_0 (t-t')t'^{2(j-1) - \frac{m_1 + \cdots + m_p}{k}} \mathrm{d}t' \\
	& \qquad \times \int_{\G_\xi} \bigg( \prod^p_{r = 1} \Big( \ind_{\O_k}^{[m_r]} \ast \abs{\ft{\vec u_0}}^{[(k - 1)l_r + 1 -m_r]} \Big) (\xi_r) \bigg) \bigg( \prod^k_{s = p+1} \abs{\ft{\vec u_0}}^{[(k - 1)l_s + 1} (\xi_s) \bigg) \mathrm{d}\xi_\G.
\end{align*}

\noi Observe that, after rearranging the product, we have
\begin{align*}
\int_{\G_\xi} \bigg( \prod^p_{r = 1} & \Big( \ind_{\O_k}^{[m_r]} \ast \abs{\ft{\vec u_0}}^{[(k - 1)l_r + 1 -m_r]} \Big) (\xi_r) \bigg)
\bigg( \prod^k_{s = p+1} \Big( \abs{\ft{\vec u_0}}^{[(k - 1)l_s + 1} \Big) (\xi_s) \bigg) \mathrm{d}\xi_\G \\
	& = \ind_{\O_k}^{[m_1 + \cdots + m_p ]} \ast \abs{\ft{\vec u_0}}^{[(k - 1)j + 1 -m_1 - \cdots - m_p]} (\xi).
\end{align*}

\noi Besides, observe that $m_1 + \cdots + m_p \leq (k-1)(j-1) + k = (k-1)j + 1$.
By $j \ge 2$, the time integral is well defined and we have 
\begin{align*}
\int^t_0 (t-t')t'^{2(j-1) - \frac{m_1 + \cdots + m_p}{k}} \mathrm{d}t'
& = \frac 1{(2j-1 - \frac{m_1 + \cdots + m_p}{k})(2j - \frac{m_1 + \cdots + m_p}{k})}
t^{2j - \frac{m_1 + \cdots + m_p}{k}}
\\
&\le
\Big( \frac k{k+1} \Big)^2
\frac 1{(j-1)(j-\frac 1{k+1})}
t^{2j - \frac{m_1 + \cdots + m_p}{k}} \\
 & \le
\Big( \frac k{k+1} \Big)^2
\frac{2(k+1)}{(j+1)^2}
t^{2j - \frac{m_1 + \cdots + m_p}{k}}.
\end{align*}

\noi On the other hand, note that we can rewrite the sum
\begin{align*}
\sum_{m_1 = 1}^{(k-1)l_1+1} \dots \sum_{m_p = 1}^{(k-1)l_p+1} 1 & = \sum_{m_2 = 1}^{(k-1)l_1+1} \dots \sum_{m_p = 1}^{(k-1)l_p+1} \sum_{m=m_2+\dots+m_p+1}^{S_p(l_1,\dots,l_p)} 1 \\
	& \le \Big( \prod_{j=2}^p ((k-1)l_j+1) \Big)
\sum_{m=1}^{S_p(l_1,\dots,l_p)}1
\end{align*}

\noi where we denote
$m = \sum_{r=1}^p m_r$ and
$S_p(l_1, \dots, l_p) = \sum^p_{r = 1} ( (k-1)l_r + 1)$. Thus, we get
\begin{equation}
\label{eq:estIj1v}
\begin{aligned}
I^{(j),\ast}_{\overline{l}, 1, \overline{v}} (t, \xi)
& \les
\frac{1}{(j+1)^2} \frac{C^{kl_1+1}}{(l_1+1)^2} \bigg(\prod^k_{p=2} \frac{C^{kl_p+1}}{l_p+1} \bigg) \\
	& \qquad \times
\sum^{S_p(l_1, \dots, l_p)}_{m=1} R^m t^{2j} (tN)^{- \frac mk} \Big( \ind_{\O_k}^{[m]} \ast \abs{\ft{\vec u_0}}^{[(k - 1)j + 1 -m]} \Big)(\xi).
\end{aligned}
\end{equation}

\noi Observe then that $S_k(l_1, \dots, l_k) = (k-1)j + 1$. Therefore, \eqref{eq:splitIj1} and \eqref{eq:estIj1v} give, after rearranging the sums (note that we have at most $2^k$ possible $I^{(j), \ast}_{\overline{l}, 1, \overline{v}}$ terms possible)
\begin{equation}
\label{eq:estIj1}
\begin{aligned}
I^{(j)}_{\overline{l}, 1} (t, \xi) & \les
\frac{1}{(j+1)^2} \frac{C^{kl_1+1}}{(l_1+1)^2} \bigg(\prod^k_{p=2} \frac{C^{kl_p+1}}{l_p+1} \bigg) \\
	& \qquad \times
 \sum^{(k-1)j + 1}_{m=1} R^m t^{2j} (tN)^{- \frac mk} \Big( \ind_{\O_k}^{[m]} \ast \abs{\ft{\vec u_0}}^{[(k - 1)j + 1 -m]} \Big)(\xi)
\end{aligned}
\end{equation}

\noi where the implicit constant depends only on $d$ and $k$.

It follows
from \eqref{eq:splituj}, \eqref{eq:splitIcalj}, \eqref{eq:estIj0} and \eqref{eq:estIj1}
that
\begin{align*}
|\F_x & \big[\Xi_j (\vec u_{0,n})\big](t,\xi)|
\\
&\les
\frac {1}{(j+1)^2}
\sum_{\substack{l_1, \dots, l_k \in \NB_0 \\ l_1+ \cdots + l_k=j-1}} \frac{C^{kl_1+1}}{(l_1+1)^2} \bigg(\prod^k_{p=2} \frac{C^{kl_p+1}}{l_p+1} \bigg)
 \\
& \qquad
\times
\Bigg(
t^{2j} \abs{\ft{\vec u_0}}^{[(k - 1)j + 1]}
+
\sum^{(k-1)j + 1}_{m=1} R^m t^{2j} (tN)^{- \frac mk} \Big( \ind_{\O_k}^{[m]} \ast \abs{\ft{\vec u_0}}^{[(k - 1)j + 1 -m]} \Big)(\xi)
\Bigg)
\\
&\les
\frac{C^{kj} }{(j+1)^2}
\sum_{\substack{l_1, \dots, l_k \in \NB_0 \\ l_1+ \cdots + l_k=j-1}}
\frac{1}{(l_1+1)^2} \bigg(\prod^k_{p=2} \frac{1}{l_p+1} \bigg)
 \\
& \qquad
\times
\Bigg(
t^{2j} \abs{\ft{\vec u_0}}^{[(k - 1)j + 1]}
+
\sum^{(k-1)j + 1}_{m=1} R^m t^{2j} (tN)^{- \frac mk} \Big( \ind_{\O_k}^{[m]} \ast \abs{\ft{\vec u_0}}^{[(k - 1)j + 1 -m]} \Big)(\xi)
\Bigg).
\end{align*}
Here,
a direct calculation shows that
\begin{align*}
\sum_{\substack{l_1, \dots, l_k \in \NB_0 \\ l_1+ \cdots + l_k=j-1}} &
\frac{1}{(l_1+1)^2} \bigg(\prod^k_{p=2} \frac{1}{l_p+1} \bigg)
\\
&=
\sum_{l_1=0}^{j-1}
\sum_{l_2=0}^{j-1-l_1}
\dots
\sum_{l_{k-1}=0}^{j-1-l_1-\dots - l_{k-2}}
\frac{1}{(l_1+1)^2} \bigg(\prod^{k-1}_{p=2} \frac{1}{l_p+1} \bigg) 
\frac 1{j-l_1- \dots - l_{k-1}}
\\
&=
\sum_{l_1=0}^{j-1}
\sum_{l_2=0}^{j-1-l_1}
\dots
\sum_{l_{k-1}=0}^{j-1-l_1-\dots - l_{k-2}}
\frac{1}{(l_1+1)^2} \bigg(\prod^{k-2}_{p=2} \frac{1}{l_p+1} \bigg)
\\
&\qquad
\times
\frac 1{j+1-l_1- \dots - l_{k-2}}
\Big(
\frac 1{l_{k-1}+1}
+
\frac 1{j-l_1- \dots - l_{k-1}} \Big)
\\
&\les
\sum_{l_1=0}^{j-1}
\sum_{l_2=0}^{j-1-l_1}
\dots
\sum_{l_{k-2}=0}^{j-1-l_1-\dots - l_{k-3}}
\frac{1}{(l_1+1)^2} \bigg(\prod^{k-2}_{p=2} \frac{1}{l_p+1} \bigg)
\frac{\log (j+1)}{j+1-l_1- \dots - l_{k-2}}
\\
&\les \cdots \\
&\les
\sum_{l_1=0}^{j-1}
\frac{1}{(l_1+1)^2}
\frac{(\log (j+k))^{k-2}}{j+k-2-l_1}
\les 1.
\end{align*}
Hence,
we obtain that
\begin{align*}
|\F_x & \big[\Xi_j (\vec u_{0,n})\big](t,\xi)|
\\
&\les
\frac{C^{kj+1}}{(j+1)^2}
\Bigg(
t^{2j} \abs{\ft{\vec u_0}}^{[(k - 1)j + 1]}
+
\sum^{(k-1)j + 1}_{m=1} R^m t^{2j} (tN)^{- \frac mk} \Big( \ind_{\O_k}^{[m]} \ast \abs{\ft{\vec u_0}}^{[(k - 1)j + 1 -m]} \Big)(\xi)
\Bigg),
\end{align*}
which shows \eqref{it0jab}.
\end{proof}

\subsection{Proof of Proposition~\ref{PROP:NIwithILORn}}
\label{Subsec4.2}

In this subsection, we give a proof of Proposition~\ref{PROP:NIwithILORn}. Globally, the argument follows the one in the proof of Proposition~\ref{PROP:NIatGIDn}, but with the estimates proved in the previous subsection. Throughout this section, we assume that
$$
A = 1.
$$

\noi Observe then that the lefthand side of \eqref{EQ:NIwithILORn} is equivalent to
\begin{equation}
\label{eq:condID}
RN^s \ll \frac 1n.
\end{equation}

\noi As in Section~\ref{Sec3}, we want to show that the power series exists, and that $\| \Xi_1 (\vec u_{0,n}) (T) \|_{H^\s} > n $ while all the other terms are smaller. First, we get from \eqref{EQ:NewEst1} and \eqref{eq:normdiffXi1} when $N^{-1} \ll T\ll 1$
$$
\| \Xi_1 (\vec u_{0,n}) (T) \|_{H^\s} \ges R^k \frac TN -
\bigg( \frac TN \sum^{k-1}_{m = 1} R^m + T^2 \bigg).
$$

\noi Thus, if we assume
\begin{equation}
\label{cond2}
T \ll R^k N^{-1}
\end{equation}

\noi we have
\begin{equation}
\notag
\|\Xi_1 (\vec u_{0,n}) (T) \|_{H^\s} \ges R^k \frac TN
\end{equation}

\noi and we want
\begin{equation}
\label{cond3}
R^k \frac TN \gg n.
\end{equation}

\noi Note also that we have from Lemma~\ref{lem:ujit1} and \eqref{cond2}, for any $j \geq 3$,
\begin{align*}
\| \Xi_j (\vec u_{0,n}) (T) \|_{H^\s}
\les
T^{2j} \Big( R (TN)^{-\frac 1k} \Big)^{(k-1)j+1}
= \Big(R^{k-1} T^2 (TN)^{- \frac{k-1}{k}} \Big)^j R(TN)^{-\frac 1k}.
\end{align*}

\noi Let us assume that
\begin{equation}
\label{cond5}
R^{k-1} T^2 (TN)^{- \frac{k-1}{k}} \ll 1 \iff T \ll R^{- \frac{k(k-1)}{k+1}} N^{\frac{k-1}{k+1}},
\end{equation}

\noi then the series $\sum_{j \geq 0} \Xi_j (\vec u_{0,n}) (T)$ converges absolutely and we have
\begin{equation}
\notag
\sum_{j \geq 3} \|\Xi_j (\vec u_{0,n}) (T) \|_{H^\s} \les R^{3k-2} T^{3 + \frac 2k} N^{-3 + \frac 2k}.
\end{equation}

\noi Note also that, from \eqref{it04a0NormHs} and the assumption \eqref{eq:condID}, we have for any $\s \leq s < 0$
$$
\|\Xi_0 (\vec u_{0,n})\|_{H^\s}
\le
\|\Xi_0 (\vec u_{0,n})\|_{H^s} \les 1.
$$

\noi Then, we have from our considerations and \eqref{it04a2NormHs} that $u(T) = \sum_{j \geq 0} \Xi_j (\vec u_{0,n}) (T)$ exists, is a solution to \eqref{vNLW} and 
\begin{align*}
\|u(T)\|_{H^\s}
&\geq \| \Xi_1 (\vec u_{0,n}) (T) \|_{H^\s} - \| \Xi_0 (\vec u_{0,n}) (T) \|_{H^\s} - \| \Xi_2 (\vec u_{0,n}) (T) \|_{H^\s} - \sum^\infty_{j = 3} \| \Xi_j (\vec u_{0,n}) (T) \|_{H^\s} 
\\
&\ges R^k \frac TN - 1 -
\Bigg(
\bigg(\frac TN \bigg)^2 \sum^{2k-1}_{m = k+1} R^m + \frac{T^3}{N} \sum^{k}_{m = 1} R^m + T^4
\Bigg)
 - R^{3k-2} T^{3 + \frac 2k} N^{-3 + \frac 2k}.
\end{align*}

\noi Thus, we need
\begin{itemize}
	\item $R^k \frac TN \gg 1$ which is implied by \eqref{cond3},
	\item $R^k \frac TN \gg \frac{T^2}{N^2} R^m$ for any $k+1 \leq m \leq 2k-1$, which is equivalent to $R^k \gg \frac TN R^{2k-1}$,
	\item $R^k \frac TN \gg \frac{T^3}{N} R^m $ for any $1 \leq m \leq k$, and this is equivalent to $1 \gg T^2$ which is already assumed for Lemma~\ref{lem:ujit1},
	\item $R^k \frac TN \gg T^4$ which is equivalent to $ R^k N^{-1} \gg T^3$,
	\item $R^k \frac TN \gg R^{3k-2} T^{3 + \frac 2k} N^{-3 + \frac 2k} $ which is equivalent to \eqref{cond5}.
\end{itemize}

Therefore,
for the norm inflation,
we want to choose $R$ and $T$ as follows:
\begin{align*}
\textup{(i)} & \quad RN^s \ll \frac 1n, \\
\textup{(ii)} & \quad R^k \frac TN \gg n \iff T \gg R^{-k}Nn, \\
\textup{(iii)} & \quad R^k \gg \frac TN R^{2k-1}
\iff
T \ll R^{-(k-1)}N , \\
\textup{(iv)} & \quad R^k N^{-1} \gg T^3
\iff
T \ll R^{\frac{k}{3}} N^{-\frac{1}{3}}
,\\
\textup{(v)} & \quad T \ll \min\big( R^{- \frac{k(k-1)}{k+1}} N^{\frac{k-1}{k+1}}, R^k N^{-1}\big)
, \\
\textup{(vi)} & \quad N^{-1} \ll T \ll 1
\end{align*}
and $s<0$. Note that (v) comes from \eqref{cond2} and \eqref{cond5}, while (vi) comes from the conditions in Proposition~\ref{PROP:NewEst1} and Lemmas~\ref{lem:it2t} and \ref{lem:ujit1}.

The conditions
(ii)--(vi) are equivalent to the following:
\begin{equation}
\max \Big( R^{-k}N, N^{-1} \Big)
\ll T
\ll \min \Big( R^{-(k-1)}N, R^{\frac{k}{3}} N^{-\frac{1}{3}}, R^k N^{-1}, R^{- \frac{k(k-1)}{k+1}} N^{\frac{k-1}{k+1}},  1 \Big).
\notag
\end{equation}
Then, $R$ needs to satisfy
\[
N^{\frac 1k} \ll R \ll N^{\frac{2}{k-1}}.
\]
With condition (i),
we can choose appropriate $T$ and $R$
when
$s<-\frac 1k$.
Namely, we obtain the norm inflation at general initial data with infinite loss of regularity for any $d \geq 1$, $2 \leq k \leq 5$ and $s<- \frac 1k$.

\begin{remark}
\label{REM:ni34}
\rm
Since
a condition of the convergence of the power series is strong for $A \gg 1$,
the argument in this subsection does not yield
a better ill-posedness result in Theorem \ref{THM:main} for $d=1$ and $k=3,4$.
Indeed,
we can not choose appropriate $R$, $T$ and $A$ for $d=1$ and $s>-\frac 1k$ satisfying
\begin{align*}
\textup{(i)} & \quad RN^s A^{\frac 12} \ll 1
\iff
R \ll N^{-s} A^{-\frac 12},
\\
\textup{(ii)} & \quad R^k \frac{A^{k-1}}N T^{-s+\frac 12} \gg 1
\iff
T \gg (R^{-k} N A^{-k+1})^{\frac 2{1-2s}}
, \\
\textup{(iii)} & \quad
T^2 R^{k-1} A^{k-1} (TN)^{-\frac{k-1}k}
\ll 1
\iff
T \ll (R^{-k} N A^{-k})^{\frac{k-1}{k+1}}
,
\\
\textup{(iv)} & \quad
A^{-1} \les T \ll 1.
\end{align*}
Here, (i) and (ii) come from \eqref{it04a0NormHs} and \eqref{EQ:NewEst1}, respectively.
Moreover,
by Lemma \ref{lem:ujit1},
(iii) ensures the convergence of the series.
As in the proof of \eqref{THM:mainC} in Theorem \ref{THM:main},
we need to take (iv).

By (ii) and (iii),
$R, N, A$ need to satisfy
\[
(R^{-k} N A^{-k+1})^{\frac 2{1-2s}} \ll (R^{-k} N A^{-k})^{\frac{k-1}{k+1}}
\iff
R \gg N^{\frac 1k} A^{\frac{(k-1)(-2ks-k-2)}{k(2(k-1)s+k+3)}},
\]
where we note that $2(k-1)s+k+3>0$ when $s>-\frac {k+3}{2(k-1)} = -\frac 12 - \frac 2{k-1}$.
From (i), $N$ and $A$ should be
\[
N^{\frac 1k} A^{\frac{(k-1)(-2ks-k-2)}{k(2(k-1)s+k+3)}} \ll N^{-s} A^{-\frac 12}
\iff
N^{s+\frac 1k} \ll A^{\frac{2k(k-1)s+k^2-k-4}{2k(2(k-1)s+k+3)}}.
\]
This condition becomes
\[
N^{s+\frac 13} \ll A^{\frac{6s+1}{6(2s+3)}}
\quad \text{and} \quad
N^{s+\frac 14} \ll A^{\frac{3s+1}{6s+7}}
\]
for $k=3$ and $k=4$, respectively.
When $k=3$,
we can not choose $A \in (1,N)$ for $s \ge -\frac 13$.
On the other hand,
when $k=4$ and $s \ge - \frac 14$,
we can choose appropriate $A \in (1,N)$ only for $-\frac 14  \le s < - \frac 16 = s_{\rm scal}$,
where $s_{\rm scal}$ is defined in \eqref{EQ:sc}.
However,
the norm inflation in this range is already proved in Section \ref{Sec3}.
\end{remark}

\section{Well-posedness of vNLW}
\label{sec:WPofvNLW}

In this section, we study the well-posedness of \eqref{vNLW} in negative Sobolev spaces.
To do so, we use Schauder estimate.
See Lemma 10 in \cite{LO22}.

\begin{proposition}
\label{prop:Schauder}
For any $\s \ge 0$, $1 \le p \le q \le \infty$ and $0<t<1$,
we have
\[
\big\| D^\s P(t)u_0 \big\|_{L^q (\M^d)}
\les
t^{-\s -d ( \frac 1p - \frac 1q)}
\| u_0 \|_{L^p (\M^d)},
\]
where $P(t)$ is defined by \eqref{EQ:DefiPoissonKer}.
\end{proposition}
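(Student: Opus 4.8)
The plan is to realize $D^\s P(t)$ as convolution against an explicit kernel, to reduce the claimed estimate to an $L^r$ bound on that kernel via Young's inequality, and then to obtain the kernel bound by scaling when $\M = \R$ and by Poisson summation when $\M = \T$. Concretely, $D^\s P(t)$ is the Fourier multiplier operator with symbol $|\xi|^\s e^{-\frac12 |\xi| t}$, so that $D^\s P(t) u_0 = K_t \ast u_0$ with $\widehat{K_t}(\xi) = |\xi|^\s e^{-\frac12 |\xi| t}$ on $\widehat{\M}^d$. Given $1 \le p \le q \le \infty$, let $r \in [1,\infty]$ be defined by $1 - 1/r = 1/p - 1/q$; then Young's inequality reduces the statement to
\[
\| K_t \|_{L^r(\M^d)} \les t^{-\s - d(1 - 1/r)}, \qquad 0 < t < 1 .
\]
Since $r$ may be an arbitrary exponent in $[1,\infty]$ and the $L^r$-norms are log-convex in $1/r$, it suffices to prove this for $r = 1$ and $r = \infty$.

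For $\M = \R$ I would use the exact scaling relation $K_t(x) = t^{-d-\s} K_1(x/t)$, where $K_1 := \F^{-1}[\, |\xi|^\s e^{-\frac12 |\xi|}\,]$, which yields $\| K_t \|_{L^r(\R^d)} = t^{-\s - d(1 - 1/r)} \| K_1 \|_{L^r(\R^d)}$; hence the whole statement reduces to $K_1 \in L^1(\R^d) \cap L^\infty(\R^d)$. The $L^\infty$ bound is immediate, since $|\xi|^\s e^{-\frac12 |\xi|} \in L^1(\R^d)$ for $\s \ge 0$. For the $L^1$ bound one uses that the multiplier $m(\xi) := |\xi|^\s e^{-\frac12 |\xi|}$ is smooth with exponentially decaying derivatives on $\{ |\xi| \ge 1 \}$ and has only a mild, homogeneous-type singularity near $\xi = 0$: a dyadic decomposition of a neighbourhood of the origin combined with repeated integration by parts gives the pointwise bound $|K_1(x)| \les (1 + |x|)^{-d - \gamma}$ for some $\gamma = \gamma(\s) > 0$, and in particular $K_1 \in L^1(\R^d)$. (Alternatively, one checks $m \in H^a(\R^d)$ for some $a > d/2$ and concludes by the Cauchy--Schwarz inequality with the weight $|x|^a$.)

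For $\M = \T$ I would invoke Poisson summation to identify the kernel on $\T^d$ with the periodization $K_t = \sum_{m \in \Z^d} K_t^\R(\, \cdot \, + m)$ of the $\R^d$-kernel $K_t^\R$; the triangle inequality then gives $\| K_t \|_{L^1(\T^d)} \le \| K_t^\R \|_{L^1(\R^d)} = t^{-\s} \| K_1 \|_{L^1(\R^d)} \les t^{-\s}$, which is the case $r = 1$. For $r = \infty$, since $\widehat{\T}^d = \Z^d$,
\[
\| K_t \|_{L^\infty(\T^d)} \le \sum_{n \in \Z^d} |n|^\s e^{-\frac12 |n| t} \les \int_{\R^d} \jb{\xi}^\s e^{-\frac14 |\xi| t} \, d\xi \les t^{-d - \s}
\]
for $0 < t < 1$, where the middle step compares the lattice sum with the corresponding integral and the last step is the scaling of that integral. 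Interpolating between $r = 1$ and $r = \infty$, and then applying Young's inequality on $\T^d$, completes the argument.

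The only genuinely delicate step is the $L^1$ bound for $K_1$ on $\R^d$, i.e.\ the low-frequency analysis of the non-smooth multiplier $|\xi|^\s e^{-\frac12 |\xi|}$; all of the remaining ingredients --- Young's inequality, interpolation, the scaling identity, Poisson summation, and the comparison of the lattice sum with an integral --- are routine.
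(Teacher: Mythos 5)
Your argument is correct in outline, but note that the paper itself does not prove Proposition \ref{prop:Schauder}: it is imported as Lemma 10 of Liu--Oh \cite{LO22}, so there is no in-paper proof to compare against. Your route --- writing $D^\sigma P(t)u_0 = K_t \ast u_0$ with $\widehat{K_t}(\xi)=|\xi|^{\sigma}e^{-\frac12|\xi|t}$, reducing via Young's inequality to $\|K_t\|_{L^r}\lesssim t^{-\sigma-d(1-\frac1r)}$ with $1-\frac1r=\frac1p-\frac1q$, then using exact scaling on $\R^d$ and, on $\T^d$, periodization by Poisson summation for $r=1$ together with a lattice-sum/integral comparison for $r=\infty$ and log-convexity in between --- is the standard self-contained proof of such Schauder estimates, and all of these reductions (the choice of $r$, the scaling identity $K_t(x)=t^{-d-\sigma}K_1(x/t)$, the bound $\sum_{n\in\Z^d}|n|^{\sigma}e^{-\frac12|n|t}\lesssim t^{-d-\sigma}$ for $0<t<1$) check out.

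The one step stated a bit too quickly is the $L^1(\R^d)$ bound for $K_1$ in the borderline case $\sigma=0$. The crude dyadic/integration-by-parts estimate for a symbol with an $|\xi|^{\sigma}$-type singularity at the origin only yields $|K_1(x)|\lesssim |x|^{-d-\sigma}$ for large $|x|$, which is not integrable when $\sigma=0$. To get a genuine $\gamma>0$ in $|K_1(x)|\lesssim (1+|x|)^{-d-\gamma}$ you should add the (routine) observation that in the expansion $|\xi|^{\sigma}e^{-\frac12|\xi|}=\sum_k c_k|\xi|^{\sigma+k}$ near $\xi=0$ any term with $\sigma+k$ an even integer is smooth, so the worst genuinely singular term has exponent $\sigma'>0$ and one may take $\gamma=\sigma'$ (for $\sigma=0$ this gives $\gamma=1$, consistent with the explicit Poisson kernel $c_d\,t\,(t^2+|x|^2)^{-\frac{d+1}{2}}$, which is an even quicker fix for that case). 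The same remark rescues your alternative weighted Cauchy--Schwarz argument: $e^{-\frac12|\xi|}$ lies in $H^{a}(\R^d)$ for $a<\frac d2+1$ precisely because its non-smooth part at the origin is of size $|\xi|$. With this point made explicit, the proof is complete; the use of Poisson summation on $\T^d$ is unproblematic once the pointwise decay of the $\R^d$ kernel is established.
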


For $0<T<1$ and $-\frac d2<s \le 0$, we denote
\begin{equation}
\notag
\| u \|_{Y(T)} := \sup_{0<t<T} t^{-s+d(\frac 12 - \frac 1k)} \| u(t) \|_{\dot H^{d(\frac 12 - \frac 1k)}}
\end{equation}

\noi and
\begin{equation}
\notag
\| u \|_{X(T)} := \| u \|_{C_T H^s} + \| u \|_{Y(T)},
\end{equation}

\noi where we denoted $C_T H^s \coloneqq C([0,T], H^s (\M^d))$.
We define
\[
X(T) := \big\{ u \in C([0,T]; H^s(\M^d)) \cap C((0,T); \dot H^{d (\frac 12 - \frac 1k)}(\M^d)) \, | \
\| u \|_{X(T)} < \infty \big\}.
\]

We use a contraction argument on the ball $B^X_{2R} \coloneqq \{ u \in X(T) \, | \ \|u\|_{X(T)} \le 2R \}$. To do so, let us define the following functional on $X(T)$:
\[
\G(u)(t)
:= V(t) \vec{u}_0 + I_k (u)(t),
\]
where $V(t)$ and $I_k$ are defined in \eqref{EQ:DefiV} and \eqref{EQ:DefiDuhamelOp}, respectively.

From Proposition~\ref{prop:Schauder}, we have
$$
\| V(t) \vec u_0 \|_{X(T)}
\les
\| \vec u_0 \|_{\H^s} \leq R.
$$

\noi Let us now estimate $\| I_k (u)(t)\|_{X(T)}$ by studying separately the two norms.
Note that, since $k$ and $d$ satisfy \eqref{WPA}, we have
\begin{equation}
\label{WPcondreg}
\max \Big( s_{\rm scal}, -\frac d2 \Big) \leq s_{\rm vis} < s \leq 0.
\end{equation}

\noi On one hand, since $- \frac d2 < s \leq 0$, we have by Sobolev embedding $W^{-s, \frac{2d}{d-2s}}(\M^d) \embeds L^2(\M^d)$ with $\frac{2d}{d-2s} >1$. Therefore, we have by Proposition \ref{prop:Schauder} for any $0 < t < T$
\begin{align*}
\| I_k (u) (t) \|_{H^s}
&\les
\int_0^t (t-t') \big\| P(t-t') (u^k)(t') \big\|_{H^s} \mathrm{d}t' \\
&\les
\int_0^t (t-t') \big\| P(t-t') (u^k)(t') \big\|_{L^{\frac{2d}{d-2s}}} \mathrm{d}t' \\
&\les
\int_0^t (t-t')^{1-d ( 1- \frac{d-2s}{2d})} \big\| u^k(t') \big\|_{L^1} \mathrm{d}t'
\\
&\les
\int_0^t (t-t')^{-s-\frac d2+1} \| u(t') \|_{L^k}^k \mathrm{d}t' \\
&\les
\int_0^t (t-t')^{-s-\frac d2+1} t'^{k \{ s-d(\frac 12-\frac 1k)\}} \mathrm{d}t'
\| u \|_{Y(T)}^k.
\end{align*}

\noi Note that, by combining the assumption \eqref{WPA} and \eqref{WPcondreg}, this last time integral converges and we get
$$
\| I_k (u) \|_{C_T H^s} \les T^{\ta_1} \|u\|^k_{Y(T)}
$$

\noi for some $\ta_1 >0$.
Similarly,
we have
\begin{align*}
\| I_k (u) (t) \|_{\dot H^{d(\frac 12 - \frac 1k)}}
&\les
\int_0^t (t-t') \big\| P(t-t') (u^k)(t') \big\|_{\dot H^{d(\frac 12 - \frac 1k)}} \mathrm{d}t'
\\
&\les
\int_0^t (t-t')^{1-d(1-\frac 1k)} \big\| u^k(t') \big\|_{L^1} \mathrm{d}t'
\\
&\les
\int_0^t (t-t')^{1-d(1-\frac 1k)} t'^{k \{ s-d(\frac 12-\frac 1k)\}} \mathrm{d}t'
\| u \|_{Y(T)}^k.
\end{align*}

\noi According again to the assumption \eqref{WPA} and \eqref{WPcondreg}, our last time integral converges and yields
$$
\| I_k (u)\|_{Y(T)} \les T^{\ta_2} \|u\|^k_{Y(T)}
$$

\noi for some $\ta_2 > 0$. Choosing $\ta = \min( \ta_1, \ta_2)$ and $0<T<1$ such that $T^\ta \ll \frac{1}{2^{k+1}R^{k-1}}$, we get
$$
\|\G[u]\|_{X(T)} \leq 2R
$$

\noi and $\G$ maps the ball $B^X_{2R}$ into itself. Using the linearity of $W(t)$ and H\"older's inequality, we prove in a similar way that $\G$ is a contraction. By Banach fixed point theorem, there exists a unique fixed point to $\G$ in $B^X_{2R}$.
Since the continuity with respect to the initial data comes from an standard argument,
we omit the details here.

\begin{remark}
\rm
\label{REM:wp1d}
When $d=1$,
we also obtain the well-posedness in $H^s(\M)$ for $s> \frac 12- \frac 2k = s_{\rm vis}$.
Indeed,
when $\frac 12- \frac 2k < s \le \frac 12 - \frac 1k$,
it follows from \eqref{EQ:DefiDuhamelOp}, $L^1(\M) \hookrightarrow H^{-\frac 12-\eps} (\M)$
and $H^{\frac 12-\frac 1k}(\M) \hookrightarrow L^k(\M)$
 that
\begin{align*}
\| I_k (u) (t) \|_{H^{\frac 12-\frac 1k}}
&\les
\int_0^t \big\| (u^k)(t') \big\|_{H^{-\frac 12 - \frac 1k}} dt'
\les
\int_0^t \big\| (u^k)(t') \big\|_{L^1} dt'
\les
\int_0^t \| u(t') \|_{L^k}^k dt'
\\
&\les
\int_0^t \| u(t') \|_{\dot H^{\frac 12 - \frac 1k}}^k dt'
\les
\int_0^t t'^{(s-\frac 12+\frac 1k)k} dt' \| u \|_{Y(T)}^k
\les
T^{\theta_1} \| u \|_{Y(T)}^k
\end{align*}
with
$\theta_1 := (s-\frac 12+\frac 1k)k+1$.
Note that
$s>\frac 12 -\frac 2k$ implies
$\theta_1>0$.
Moreover,
when $\frac 12 - \frac 1k < s< \frac 12$,
the same calculation above yields that
\begin{align*}
\| I_k (u) (t) \|_{H^s}
\les
\int_0^t \big\| (u^k)(t') \big\|_{L^1} dt'
\les
\int_0^t \| u(t') \|_{L^k}^k dt'
\les T \| u \|_{C_T H^s}^k.
\end{align*}
Since
the case $s \ge \frac 12$ is easily treated,
we omit the details here.
\end{remark}

\begin{ackno}\rm
The authors would like to thank Tadahiro Oh for suggesting this problem. P.dR. acknowledges support from Tadahiro Oh's ERC grant (no. 864138 ``SingStochDispDyn"). M.O.~ was supported by JSPS KAKENHI Grant number JP23K03182.

\end{ackno}

\end{document}